\newtheorem{theorem}{Theorem}[section]
\newtheorem{lemma}[theorem]{Lemma}
\newtheorem{corollary}[theorem]{Corollary}
\newtheorem{proposition}[theorem]{Proposition}
\theoremstyle{definition}
\newtheorem{definition}[theorem]{Definition}
\newtheorem{example}[theorem]{Example}
\theoremstyle{remark}
\newtheorem{remark}[theorem]{Remark}
\def\tdlc{t.d.l.c.}
\DeclareMathOperator{\Aut}{{\rm Aut}}
\DeclareMathOperator{\stab}{{\rm Stab}}
\DeclareMathOperator{\Fix}{{\rm Fix}}
\def\ident{1}
\def\NN{\mathbb N}
\def\ZZ{\mathbb Z}
\def\tree{\mathcal{T}}
\newcommand{\Pk}[1]{IP_{{#1}}}
\newcommand{\colvec}[1]{\begin{pmatrix}#1\end{pmatrix}}
\def\lcm{\operatorname{lcm}}
\newcommand{\AbertGlasner}{MR2491892}
\newcommand{\Serre}{MR1954121}
\newcommand{\HypFlatRank}{MR2956246}
\newcommand{\FlatRankBuildings}{MR2356316}
\newcommand{\WillStructure}{MR1299067}
\newcommand{\vDant}{MR1556954}
\newcommand{\WillisFlatRank}{MR2052362}
\newcommand{\BurgerMozes}{MR1839488}
\newcommand{\ElderWillis}{ElderWillis}
\newcommand{\WillSimple}{MR2320465}
\newcommand{\ChrisPhD}{ChrisPhD}
\newcommand{\Sunic}{MR2823790}
\newcommand{\Tits}{MR0299534}
\newcommand{\Kap}{MR1703086}
\newcommand{\HaglundPaulin}{MR1668359}
\newcommand{\GardinerPraeger}{MR1279076}
\newcommand{\CarboneGarland}{MR2017720}
\newcommand{\CapraceMonod}{MR2739075}
\newcommand{\CapraceRemy}{MR2217912}
\newcommand{\ConderLorimer}{MR1007714}
\newcommand{\DjokovicMiller}{MR586434}
\newcommand{\MollerVonk}{MR2997026}
\newcommand{\Wilson}{MR1691054}
\newcommand{\Djokovic}{MR0349486}
\begin{document}

\title[Simple groups of automorphisms of trees]{Simple groups of automorphisms of trees determined by their actions on finite subtrees}

\author[C. Banks]{Christopher Banks}
\thanks{The first author is supported by an Australian Postgraduate Award (APA). Research supported by the Australian Research Council projects DP0984342 and DP120100996}
\address{University of Newcastle, 2308, AUSTRALIA}
\email{christopher.banks@newcastle.edu.au}

\author[M. Elder]{Murray Elder}
\address{University of Newcastle, 2308, AUSTRALIA}
\email{murray.elder@newcastle.edu.au}

\author[G. A. Willis]{George A. Willis}
\address{University of Newcastle, 2308, AUSTRALIA}
\email{george.willis@newcastle.edu.au}

\date{\today}

\keywords{tree automorphism, Independence Property, Tits' property $P$, simple groups} 

\subjclass[2010]{Primary: 20E08 Secondary: 20E32, 22D05, 22F50} 
 
\begin{abstract}
We introduce the notion of the $k$-closure of a group of automorphisms of a locally finite tree, and give several examples of the construction. We show that the $k$-closure satisfies a new property of automorphism groups of trees that generalises Tits' Property $P$. We prove that, apart from some degenerate cases, any non-discrete group acting on a tree with this property contains an abstractly simple subgroup.
\end{abstract}

\maketitle

\section{Introduction}
\label{sec:intro}

Simple groups and their classification are a vital part of the structure theory of the classes of groups that admit composition series, among which are the finite groups and the Lie groups. Although totally disconnected, locally compact (\tdlc) groups do not admit composition series, it was shown in \cite{\CapraceMonod} that decomposing such groups into simple pieces plays a role in their structure theory as well. In \cite{\WillSimple} the third author showed that the local and global structures of simple \tdlc\ groups are linked when the group is compactly generated, and that invariants of the group, such as the scale \cite{\WillStructure} and flat-rank \cite{\WillisFlatRank}, could possibly be parameters used in a classification of such groups. 

In this article we present a general construction that produces many new examples of simple compactly generated \tdlc\ groups acting on trees. Particular classes of simple compactly generated \tdlc\ groups have been studied in various contexts. These include:
\begin{itemize}
\item Lie groups over fields of $p$-adic numbers and over fields of formal Laurent series over some finite residue field, where the scale is a power of the characteristic of the residue field and the flat-rank equals the usual algebraic rank;
\item completions of Kac-Moody groups over finite fields \cite{\CarboneGarland}\cite{\CapraceRemy}, where again the scale is a power of the characteristic of the residue field and the flat-rank equals the algebraic rank \cite{\FlatRankBuildings}; 
\item automorphism groups of buildings with negative curvature \cite{\HaglundPaulin}, where the flat-rank is at most~1 \cite{\HypFlatRank};
\item groups of almost automorphisms of trees \cite{\Kap}, where the flat-rank is infinite and the scale depends on valencies of the tree \cite{WiNeretin}; and
\item certain closed subgroups of automorphism groups of trees studied by Burger and Mozes \cite{\BurgerMozes} and Tits \cite{\Tits}, where the flat-rank is at most~1 and the scale depends on valencies of the tree. 
\end{itemize} 

Currently it is not known how close this list is to being exhaustive, or if a classification is possible even for those cases where the flat-rank is 1. Along with those that do not act on trees, there are several distinct subclasses of groups acting on trees, including the rank~1 Lie and Kac-Moody groups. 

The groups constructed and studied here extend and are motivated by the last class of examples. The constructions in \cite{\BurgerMozes} and \cite{\Tits} satisfy Jacques Tits' \textit{Property P}. Tits showed that if a group acting on a tree has this property, then a certain closed subgroup must be simple (apart from some obvious degenerate cases). Amann \cite{Amann} defines a slightly weaker {\em Independence	 Property} which coincides with Property P for closed subgroups of the full automorphism group of the 
tree.

In this paper we define a family of independence properties called \textit{Property $\Pk{k}$} which generalise the Independence Property of Amann. We show that on closed subgroups of the full automorphism group of the tree, this family coincides with another family of properties called \textit{Property $P_k$}, which generalise Tits' Property $P$. Property $P_k$ is used in proving Theorem \ref{thm:simple}, which is an analogue of Tits' theorem, and states that groups with one of these properties (aside from the same degenerate cases) also contain a simple subgroup.

We also provide a general method for constructing groups with these properties from any group acting on a tree. Given any natural number $k$ and a group $G$ acting on a tree $\tree$, then $G^{(k)}$ is defined to be the set of all automorphisms that, on each ball of radius $k$ in $\tree$, agree with some element of $G$. This forms a closed subgroup of $\Aut(\tree)$ called the \emph{$k$-closure} of $G$, which satisfies Property $\Pk{k}$.  

The article is organised as follows. 
In Section \ref{sec:notation} we give the relevant terminology on automorphism groups of trees and graphs, and recall several results from Tits' paper.
In Section \ref{sec:k-closure} we define the $k$-closure of a group acting on a tree, giving a simple example and proving basic facts about the construction, in particular conditions under which the resulting groups are non-discrete. 
In Section~\ref{sec:examples} we apply the $k$-closure construction to some known examples of groups acting on trees. 
In Section \ref{sec:PropertyPk} we define the Independence Property $\Pk{k}$ and show that the $k$-closure of a group satisfies $\Pk{k}$. We also show that this property characterises precisely when the sequence of $k$-closures terminates at $G^{(k)} = \overline{G}$. 
In Section \ref{sec:propertytits} we define Property $P_k$ and establish the relationship between this property and Property $\Pk{k}$.
In Section~\ref{sec:simplicity} we prove the simplicity result (Theorem \ref{thm:simple}) for groups satisfying Property $P_k$. By this theorem, we now have a general method for finding simple groups acting on trees, which we discuss in Section \ref{sec:other}.  We prove some results about the simple groups obtained in this way, including the existence of infinite families of distinct closed simple groups acting on a tree that do not have property $P$.
 
We note that in recent work  M\"oller and Vonk \cite{\MollerVonk} also define a new property of groups acting on trees they call {\em Property H}, which is strictly weaker than Property $\Pk{k}$. Groups that have Property H contain a topologically simple subgroup; that is, it contains no non-trivial closed normal subgroups.

\section{Preliminaries}\label{sec:notation}
For definitions and terminology concerning graphs, we refer to Serre \cite{\Serre}. A graph $X$ is determined by its vertex set $V(X)$ and its set of directed edges $E(X)$. An edge $e\in E(X)$ can be written in the form $(o(e),t(e))\in V(X)\times V(X)$, and every edge has a \textit{inverse edge} $(t(e),o(e))$, which we denote by $\overline{e}$. The term \textit{edge-pair} refers explicitly to the pair $\{e,\overline{e}\}$. In this paper all graphs are assumed to be simple (that is, they have no loops or multiple edges) and locally finite.

Let $\Aut(X)$ denote the group of automorphisms of $X$. If $x,y\in \Aut(X)$ and $v\in V(X)$ then $x.v\in V(X)$ is the image of $v$ under $x$. We may write multiplication in $\Aut(X)$ with or without the composition symbol; in both cases it is always performed from right to left i.e. $(yx).v=y.(x.v)$. An \textit{edge-inversion} is an automorphism $g$ of $X$ satisfying $g(e)=\overline{e}$ for some $e\in E(X)$.

In this paper $\tree$ will denote a tree, and the regular (or homogenous) tree of degree $d$ (in which every vertex is adjacent to $d$ others) is denoted $\tree_d$. Given an edge $(v,w)\in E(\tree)$, the \textit{semi-tree} $\tree_{(v,w)}$ is defined as the connected component of $\tree\backslash\{(v,w),(w,v)\}$ containing $w$. Let $B(v,k)$ be the subtree formed by the closed ball (with respect to the standard metric on $\tree$) of radius $k$ centered at the vertex $v\in V(\tree)$.

If $G\leq\Aut(X)$ and $Y$ is a proper subgraph of $X$ then the set of all $g\in G$ that \textit{stabilise} $Y$ (that is, for which $g(Y)=Y$) is denoted by $\stab_G(Y)$, and the set of $g\in G$ that \textit{fix} $Y$ (that is, $g.v=v$ for all $v\in V(Y)$) is denoted by $\Fix_G(Y)$).

We will say that a group $G$ of automorphisms of $X$ is \textit{vertex-transitive} if for some vertex $v$ the orbit $G.v=V(X)$, and \textit{edge-transitive} if for some edge pair the orbits $G.e\cup G.\overline{e}=E(X)$. For any vertex $v$ let $E(v):=\{e\in E(X):o(e)=v\}$ denote the set of edges emanating from $v$; note that $E(v)$ is stabilised by $\Fix_G(v)$. We define the \textit{local action} of $G$ at $v$ to be the permutation group induced by the action of $\Fix_G(v)$ on $E(v)$, and we say $G$ is \textit{locally transitive} if the local action at every vertex is transitive. 

Recall that an \textit{infinite path} in $\tree$ is represented by a sequence $C=(e_1,e_2,...)$ of edges where $t(e_i)=o(e_{i+1})$ for all $i$. A sequence which is infinite in both directions represents a \textit{doubly-infinite path} $(...,e_{-1},e_1,e_2,...)$, where $t(e_i)=o(e_{i+1})$ for all $i\in\NN$. In this paper all paths are assumed to have no backtracking, that is, $o(e_{i})\ne t(e_{i+1})$ for all $i\in\ZZ$. If the graph is a tree this implies paths have no self-intersection at all. If $C,C'$ are two paths then their intersection is either empty or a path itself.
The \emph{boundary}, $\partial\tree$, of $\tree$ is the set of equivalence classes, $[C]$, of infinite paths in $\tree$, where two infinite paths, $C$ and $C'$, are equivalent if and only if $C\bigcap C'$ is an infinite path. Elements of $\partial\tree$ are known as the \textit{ends} of $\tree$, and a path $C$ is called a \textit{representative of the end} $b$ if $b=[C]$. We say that an automorphism $g\in G\leq\Aut(\tree)$ \textit{stabilises the end} $b$ if the image under $g$ of any representative of $b$ is another representative of $b$, and that $g$ \textit{fixes the end} $b$ if it fixes some representative of $b$. 

The automorphism group of a tree can be equipped with a topology, whose basis is given by the collection of all sets of the form
\[\mathscr{U}(x,\mathcal{F}):=\{y\in\Aut(\tree):y.v=x.v\;\forall v\in\mathcal{F}\}\]
where $x\in \Aut(\tree)$ and $\mathcal{F}$ is a finite vertex set. Under this topology $\Aut(\tree)$ is a topological group. The open set $\Fix(v)=\mathscr{U}(1_G,\{v\})$ is a profinite group, as it can be expressed as the projective limit of the finite groups $\Aut(B(v,k))$. Since profinite groups are compact and totally disconnected \cite{\Wilson}, it follows that $\Aut(\tree)$ is a \tdlc\ group, with compact open subgroups $\Fix(\mathcal{F})$ for all finite $\mathcal{F}$. 

Recall that every neighbourhood of the identity $1_G$ in a \tdlc\ group $G$ contains a compact open subgroup $U$ \cite{\vDant}.
The fact that a compact group is discrete if and only if it is finite implies the following lemma.
\begin{lemma}\label{lem:finite-discrete} A subgroup $G\leq\Aut(\tree)$ is discrete (with the subspace topology induced by the one given above) if and only if there exists a vertex for which $\stab_G(v)(=\Fix(v))$ is a finite group.
\end{lemma}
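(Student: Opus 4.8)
The plan is to prove the two implications separately, relying throughout on the observation that $\Fix(v)=\mathscr{U}(1_G,\{v\})$ is a \emph{compact open} subgroup of $\Aut(\tree)$, so that $\Fix_G(v)=G\cap\Fix(v)$ is an open subgroup of $G$ in the subspace topology.

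I would dispatch the converse first, since it is the easier half. Assume $\Fix_G(v)$ is finite for some vertex $v$. As $\Aut(\tree)$ is Hausdorff, a finite subgroup is discrete, so I can choose an open neighbourhood $V$ of $1_G$ in $\Aut(\tree)$ separating $1_G$ from the remaining finitely many elements of $\Fix_G(v)$, giving $V\cap\Fix_G(v)=\{1_G\}$. Then $W=V\cap\Fix(v)$ is an open neighbourhood of $1_G$ with $W\cap G=V\cap\Fix_G(v)=\{1_G\}$, so $1_G$ is isolated in $G$; for a topological group this is equivalent to discreteness. (Concisely: $\Fix_G(v)$ is an open subgroup of $G$ which is discrete because it is finite, and an open discrete subgroup forces the whole group to be discrete.)

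For the forward direction I would start from the hypothesis that $\{1_G\}$ is open in $G$ and translate it into combinatorial data on the tree. By the description of the basis, some $\mathscr{U}(1_G,\mathcal{F})=\Fix(\mathcal{F})$ meets $G$ in $\{1_G\}$ alone, i.e. there is a finite vertex set $\mathcal{F}$ with $\Fix_G(\mathcal{F})=\{1_G\}$. Fixing any vertex $v$ and a radius $k$ with $\mathcal{F}\subseteq B(v,k)$, I get $\Fix_G(B(v,k))\subseteq\Fix_G(\mathcal{F})=\{1_G\}$. Every element of $\Fix_G(v)$ preserves the ball $B(v,k)$ and so restricts to an automorphism of this finite subtree; the resulting homomorphism $\Fix_G(v)\to\Aut(B(v,k))$ has kernel $\Fix_G(B(v,k))=\{1_G\}$. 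Since $\tree$ is locally finite, $B(v,k)$ and hence $\Aut(B(v,k))$ is finite, and the injectivity of the restriction map then yields that $\Fix_G(v)$ is finite (indeed for every $v$, which is more than the statement asks).

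I expect the only delicate point to be this forward direction: the real content is converting the abstract discreteness of $G$ into the single finite set $\mathcal{F}$ using the explicit neighbourhood basis, and then controlling $\Fix_G(v)$ through its faithful action on a finite ball. An alternative in the spirit of the remark preceding the lemma would be to note that the discrete subgroup $\Fix_G(v)$ is closed in the compact group $\Fix(v)$, hence compact, and then apply ``compact $+$ discrete $\Rightarrow$ finite''; but this route relies on the fact that discrete subgroups of Hausdorff groups are closed, which is heavier machinery than the elementary ball argument above.
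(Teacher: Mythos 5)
Your proof is correct, but it follows a different route from the paper's. The paper does not write out an argument at all: it derives the lemma in one sentence from the fact that a compact group is discrete if and only if it is finite, applied to the profinite (hence compact) group $\Fix(v)$ introduced just before the lemma — i.e., $\Fix_G(v)=G\cap\Fix(v)$ is an open subgroup of $G$, so $G$ is discrete iff this subgroup is, and discreteness of a subgroup of a compact group is then traded for finiteness. Your converse direction (finite $\Rightarrow$ open discrete subgroup $\Rightarrow$ $G$ discrete) matches this spirit, but your forward direction replaces the compactness input by an elementary, tree-specific argument: extract a finite set $\mathcal{F}$ with $\Fix_G(\mathcal{F})=\{1_G\}$ from the explicit neighbourhood basis, enclose it in a ball $B(v,k)$, and embed $\Fix_G(v)$ into the finite group $\Aut(B(v,k))$ via its faithful action on that ball. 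What your route buys is self-containedness and a slightly stronger conclusion ($\Fix_G(v)$ is finite for \emph{every} vertex): note that applying the paper's ``compact iff finite'' fact directly to $\Fix_G(v)$ requires knowing this subgroup is itself compact (e.g.\ via the fact that discrete subgroups of Hausdorff groups are closed, or a covering argument inside $\Fix(v)$), a point the paper glosses over and your finite-ball argument avoids entirely. What the paper's route buys is brevity and generality — it works verbatim in any t.d.l.c.\ group with a distinguished compact open subgroup, with no reference to the tree.
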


\subsection{Results of Jacques Tits}

The property we will study in Section \ref{sec:PropertyPk} is based on a property defined by Jacques Tits, which he used to find simple groups.
\begin{definition}\cite[Section 4]{\Tits} Suppose $G\leq\Aut(\tree)$, $C$ a path in $T$ and $\Fix_G(C)$ is the fixator of the path $C$. Define $\pi$ to be the projection of $V(\tree)$ onto $V(C)$ where $\pi(v)=x$ if $x$ is the closest vertex in the path $C$ to $v$. Let $\Fix_G(C)_x$ denote the action of $\Fix_G(C)$ restricted to $\pi^{-1}(x)$. Then $G$ satisfies \textit{Property $P$} if for any such $C$ we have
$\Fix_G(C)=\prod_{x\in V(C)}\Fix_G(C)_x$.
\label{def:Tits}\end{definition}

\begin{remark}\label{remark:indep_prop} In the case where $C$ is just an edge $(v,w)\in E(\tree)$, Property $P$ states that $\Fix_G((v,w))$ decomposes into two independent actions on $\tree_{(v,w)}$ and $\tree_{(w,v)}$, and consequently $\Fix_G((v,w))=\Fix_{G}(\tree_{(v,w)})\times\Fix_{G}(\tree_{(w,v)})$. This statement for edges defines the weaker Independence Property \cite[Definition 9]{Amann} which is shown to be equivalent to Tits' Property $P$ if $G$ is closed.\end{remark}

It is also mentioned in \cite{\Tits} that groups that stabilise a proper subtree of $\tree$ or an end of $\tree$, contain many normal subgroups. For instance, if $G=\Fix(v)$ is the stabiliser of a vertex in the full automorphism group of $\tree$, then the groups $\Fix(B(v,r))$ are normal subgroups of $G$ for each $r\in\NN$.
We will make use of the following three results, which are relevant to these cases.
 
 \begin{lemma}\cite[Lemma 4.1]{\Tits}\label{lem:T4.1} Let $G$ be a group of automorphisms of a tree $\tree$. Then the following are equivalent:
 \begin{enumerate}
 \item $G$ does not stabilise a proper non-empty subtree of $\tree$;
 \item The orbit $G.v$ of any vertex $v\in V(\tree)$ has non-empty intersection with any semi-tree.
 \end{enumerate}
 \end{lemma}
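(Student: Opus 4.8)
The plan is to prove the two implications in contrapositive form, so that the whole statement reduces to the single tree-theoretic fact that semi-trees are convex. Recall that in a tree the geodesic between two vertices is unique, and that deleting an edge-pair $\{(x,y),(y,x)\}$ splits $\tree$ into exactly two components, the semi-trees $\tree_{(x,y)}$ (containing $y$) and $\tree_{(y,x)}$ (containing $x$); each is a subtree, and being connected it is \emph{convex}, i.e.\ it contains every geodesic between any two of its vertices.

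First I would show $\neg(\mathrm{i}) \Rightarrow \neg(\mathrm{ii})$. Suppose $G$ stabilises a proper non-empty subtree $T'$. Since $T'$ is proper and $\tree$ is connected, there is a \emph{boundary edge}, i.e.\ an edge $(x,y)$ with $x\in V(T')$ and $y\notin V(T')$. I claim $T'$ is disjoint from the semi-tree $\tree_{(x,y)}$: if $T'$ contained some vertex $z\in\tree_{(x,y)}$ then, as $T'$ is connected and contains $x$, the unique geodesic from $x$ to $z$ would lie in $T'$; but this geodesic crosses the edge $(x,y)$ and hence passes through $y$, contradicting $y\notin V(T')$. Now pick any vertex $v\in V(T')$. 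Because $G$ stabilises $T'$ we have $G.v\subseteq V(T')$, so $G.v\cap\tree_{(x,y)}=\emptyset$, which is exactly the failure of (ii).

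Conversely I would show $\neg(\mathrm{ii}) \Rightarrow \neg(\mathrm{i})$. Suppose some orbit $G.v$ avoids some semi-tree $S=\tree_{(x,y)}$. Since the two semi-trees partition $V(\tree)$, the orbit $G.v$ is contained in the complementary semi-tree $\tree_{(y,x)}$. Let $T'$ be the convex hull of $G.v$, namely the union of all geodesics between pairs of vertices of $G.v$; this is a subtree, it is non-empty since $v\in G.v$, and it is $G$-invariant because $G$ permutes the orbit and carries geodesics to geodesics (the hull is closed under edge-inversion, as it contains edge-pairs). By convexity of $\tree_{(y,x)}$, every such geodesic stays inside $\tree_{(y,x)}$, so $T'\subseteq\tree_{(y,x)}$ and in particular $T'$ misses the non-empty set $S$; hence $T'$ is a proper non-empty $G$-stabilised subtree, contradicting (i).

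The two implications together give the equivalence. The step I expect to be the crux is the confinement of the convex hull to a single side of the boundary edge in the second implication; this rests entirely on the convexity of semi-trees, which in turn is just the uniqueness of geodesics in a tree. Verifying that the convex hull is genuinely $G$-invariant and closed under edge-inversion, and that a boundary edge exists in the first implication, are routine once connectedness of $T'$ is invoked.
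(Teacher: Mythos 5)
Your proof is correct. Note, however, that the paper does not prove this lemma at all: it is imported verbatim from Tits (Lemme 4.1 of \emph{Sur le groupe des automorphismes d'un arbre}), so there is no internal proof to compare against. Your two contrapositive implications --- a boundary edge of an invariant proper subtree in one direction, and the convex hull of an orbit confined by convexity of semi-trees in the other --- constitute the standard argument, essentially the one in Tits' original paper, and the routine points you defer (existence of a boundary edge, $G$-invariance and connectedness of the hull) all check out.
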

 
\begin{proposition}\cite[Proposition 3.4]{\Tits}\label{prop:T3.4} If $G\leq\Aut(\tree)$ contains no translations then $G$ is contained in either the stabiliser of a vertex, the stabiliser of an edge or the fixator of an end of $\tree$.
\end{proposition}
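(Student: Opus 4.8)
The plan is to combine the classification of tree automorphisms with a Helly-type argument for subtrees. Since $G$ contains no translations, every $g\in G$ is elliptic, that is, it either fixes a vertex or inverts an edge. To handle inversions uniformly I would first pass to the barycentric subdivision $\tree'$ of $\tree$ (adjoining a vertex at the midpoint of each edge): every $g\in G$ induces an automorphism of $\tree'$, and an edge-inversion of $\tree$ fixes the corresponding midpoint vertex of $\tree'$. Thus each $g\in G$ fixes at least one vertex of $\tree'$, and I write $F(g)\subseteq V(\tree')$ for its nonempty fixed-vertex set, which is a subtree of $\tree'$. The three conclusions correspond exactly to the three possibilities for a common fixed object in $\tree'$: an original vertex (stabiliser of a vertex), an edge-midpoint (stabiliser of an edge), or an end (fixator of an end).

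The first key step is the classical lemma that if $a,b$ are elliptic with $F(a)\cap F(b)=\emptyset$, then $ba$ is hyperbolic, with an axis running through the bridge between $F(a)$ and $F(b)$; such an element is a translation of $\tree$. Since $G$ has no translations, the contrapositive shows that the family $\mathcal F=\{F(g):g\in G\}$ of subtrees of $\tree'$ is pairwise intersecting. I would then invoke the Helly property of trees for finite families: any finitely many pairwise-intersecting subtrees have a common vertex, proved by taking the median of the three pairwise intersection points and using convexity, then inducting.

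Next I would split according to whether $\bigcap_{g\in G}F(g)$ is empty. If it is nonempty, a common fixed vertex of $\tree'$ yields directly $G\leq\stab(v)$ or $G\leq\stab(e)$ according to its type. If it is empty, I must manufacture a common end. As $\tree'$ is locally finite and countable, for each vertex $v$ I choose $g_v$ with $v\notin F(g_v)$, obtaining a countable subfamily $S_1,S_2,\dots$ whose intersection contains no vertex, hence is empty. Setting $K_n=S_1\cap\dots\cap S_n$ (nonempty by finite Helly, nested, with empty total intersection), I fix a base vertex $o$ and let $p_n$ be the nearest vertex of $K_n$ to $o$. The gate property (the geodesic from $o$ to any vertex of $K_n$ passes through $p_n$) together with $K_{n+1}\subseteq K_n$ forces the $p_n$ to lie successively along a single geodesic ray $R$; local finiteness gives $d(o,p_n)\to\infty$ (else some $p_n$ repeats and lies in every $K_n$), so $R$ determines an end $\xi$, and convexity of each $S_i$ (it contains the cofinal points $p_m$, $m\geq i$) shows $S_i\supseteq[p_i,\xi)$.

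Finally I would upgrade this to the whole family. For an arbitrary $g\in G$ with $T=F(g)$, finite Helly applied to $T,S_1,\dots,S_n$ gives $T\cap K_n\neq\emptyset$, so $\{T\cap K_n\}$ is a nested family of nonempty subtrees with empty intersection; the same projection argument produces a ray in $T$ to some end, and comparing nearest vertices to $o$ (each passing through $p_n$ by the gate property) forces that end to be $\xi$. Hence every $F(g)$ contains a representative ray of $\xi$, so every $g$ fixes that ray and therefore fixes the end $\xi$, giving $G\leq\Fix(\xi)$. I expect the main obstacle to be precisely this last step: extracting a \emph{single} common end valid for the entire, possibly uncountable, family rather than merely for a countable cofinal subfamily. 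The gate property and local finiteness are the tools that pin the a priori different limiting rays to the one end $\xi$.
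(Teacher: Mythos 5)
The paper itself offers no proof of this proposition: it is quoted directly from Tits \cite[Proposition 3.4]{\Tits}, so there is no internal argument to compare yours against. Your proof is correct and self-contained, and it follows the classical line of reasoning (essentially that of Tits and of Serre's \emph{Trees}): the elliptic/inversion/translation trichotomy, barycentric subdivision to convert inversions into vertex-fixing elements, the lemma that a product of two elliptic automorphisms with disjoint fixed trees is hyperbolic (so absence of translations makes the fixed trees pairwise intersecting), the Helly property for finite families, and, when the global intersection is empty, a nested sequence of subtrees whose gates from a basepoint $o$ march off along a ray defining the common fixed end. The two steps that usually cause trouble are both handled correctly. First, the reduction to a countable subfamily $S_1,S_2,\dots$ is legitimate because a connected, locally finite tree has countably many vertices. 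Second, the upgrade from the countable subfamily to an arbitrary $g\in G$ works exactly as you say: since $q_n\in K_n$, the geodesic $[o,q_n]$ passes through the gate $p_n$ of $K_n$, so the ray $\bigcup_n[o,q_n]$ contains every $p_n$ and hence contains, and therefore equals, the ray $R$; a tail of this ray lies in the convex set $F(g)$, so $g$ fixes pointwise a representative of $\xi$, which is precisely the paper's definition of fixing an end, and the fixator of an end is a subgroup. Two cosmetic remarks: local finiteness is not actually needed to get $d(o,p_n)\to\infty$ (nestedness plus empty total intersection already forces it, as your own parenthetical argument shows; local finiteness enters only through countability of the vertex set); and the implicit step that hyperbolicity of $ba$ on $\tree'$ yields a genuine translation of $\tree$ is fine, since subdivision merely doubles the metric and preserves unboundedness of orbits.
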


\begin{lemma}[\cite{\Tits} Lemme 4.4] Suppose $\tree$ is a tree that is not a doubly-infinite path and $G,H$ non-trivial subgroups of $\Aut(\tree)$ such that $G$ normalises $H$. If $G$ does not stabilise a proper non-empty subtree or an end of $\tree$, then $H$ also does not stabilise a proper non-empty subtree or an end of $\tree$.
\label{lem:normalised}\end{lemma}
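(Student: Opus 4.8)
The plan is to argue by contradiction: assume $H$ stabilises a proper non-empty subtree or an end of $\tree$, and deduce that $G$ must then stabilise a proper non-empty subtree or an end, contradicting the hypothesis on $G$. The whole argument rests on one elementary observation, namely that any subtree or end canonically associated with $H$ is automatically $G$-invariant, because conjugation by elements of $G$ preserves $H$. Concretely, if $b$ is an end stabilised by $H$ and $g\in G$, then for every $h\in H$ we have $h.(g.b)=g.\big((g^{-1}hg).b\big)=g.b$, since $g^{-1}hg\in H$ stabilises $b$; thus $H$ also stabilises $g.b$. The identical computation shows that the set $E_H$ of ends stabilised by $H$ and the set $F_H:=\{v\in V(\tree):h.v=v\text{ for all }h\in H\}$ of vertices fixed by all of $H$ are each $G$-invariant, and more generally that any subtree canonically (equivariantly) attached to $H$ is $G$-invariant.

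I would then split into three cases according to the coarse type of the $H$-action. \textbf{Case A:} $H$ fixes a vertex. Then $F_H$ is non-empty and, being a fixed-point set, is convex and hence a subtree; it is $G$-invariant by the observation above, and it is proper because $H\neq\triv$ cannot fix every vertex. So $G$ stabilises a proper non-empty subtree, a contradiction. \textbf{Case B:} $H$ stabilises an end but fixes no vertex. Here I use that a group stabilising three distinct ends must fix their median vertex; since $H$ fixes no vertex, $E_H$ is non-empty with at most two elements. If $E_H=\{b\}$, then $G$-invariance forces $G$ to stabilise $b$; if $E_H=\{b_1,b_2\}$, then $G$ preserves this pair and hence stabilises the doubly-infinite geodesic $L$ joining $b_1$ to $b_2$, which is a proper non-empty subtree precisely because $\tree$ is not a doubly-infinite path. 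Either way the hypothesis on $G$ is contradicted. (This is the only point at which the assumption that $\tree$ is not a doubly-infinite path is used.)

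\textbf{Case C:} $H$ stabilises a proper non-empty subtree $S$, fixes no vertex, and stabilises no end. Since $H$ fixes no end, it admits a unique minimal non-empty invariant subtree $\tree_H$, namely the intersection of all non-empty $H$-invariant subtrees: when $H$ contains a translation this is the union of the axes of its hyperbolic elements, while when it does not, Proposition~\ref{prop:T3.4} together with the absence of a fixed vertex and fixed end pins $\tree_H$ down to a single geometric edge. By its canonical character $\tree_H$ is $G$-invariant, and since $\tree_H\subseteq S\subsetneq\tree$ it is proper and non-empty, once more contradicting the hypothesis on $G$. As the contradiction hypothesis places $H$ into at least one of Cases A, B, C, this completes the argument.

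The step I expect to be the main obstacle is the existence and uniqueness of the minimal invariant subtree in Case C. The delicate point is that an infinite nested intersection of $H$-invariant subtrees could a priori shrink to a single end rather than to a non-empty subtree; it is exactly the standing assumption that $H$ fixes no end which excludes this, and this is where some care is needed. I would either invoke the standard structure theory of groups acting on trees for this fact, or give a short self-contained argument treating the two sub-cases (translations present or absent) separately, using Proposition~\ref{prop:T3.4} to reduce the translation-free situation to a stabilised vertex, edge, or end.
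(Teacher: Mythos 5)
Your proposal is correct, but a comparison with ``the paper's proof'' is not quite possible: the paper does not prove this lemma at all --- it is imported verbatim from Tits' paper (Lemme 4.4), with only the citation as justification. That said, your argument is a faithful reconstruction of the standard proof, and its engine --- that normality makes every canonically defined $H$-object $G$-invariant --- is exactly what drives Tits' original argument. The individual steps all check out: in Case A the fixed-point set of a non-trivial group is a non-empty, convex (hence subtree), proper, $G$-invariant set; in Case B a group stabilising three distinct ends fixes their median vertex, so $|E_H|\le 2$ when $H$ fixes no vertex, and the two sub-cases respectively contradict the end-hypothesis and the subtree-hypothesis on $G$ (with properness of the line $L$ being precisely where ``$\tree$ is not a doubly-infinite path'' enters, as you say); in Case C the minimal invariant subtree exists either as the union of axes when $H$ contains a translation (non-empty and contained in $S$ because every non-empty $H$-invariant subtree contains every axis), or, when $H$ has no translation, as the unique inverted edge-pair supplied by Proposition~\ref{prop:T3.4} together with the absence of fixed vertices and stabilised ends. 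The points you flag as needing care are indeed the ones to write out in full: connectedness of the union of axes (if two axes are disjoint, a suitable product of the two translations has an axis crossing the bridge between them); uniqueness of the inverted edge in the translation-free sub-case (the product of inversions in two distinct edges is a translation, which is excluded); and the containment of that edge in every non-empty $H$-invariant subtree (a connected invariant set missing both endpoints would lie in one half-tree and be swapped off itself by the inversion). With those three facts spelled out, your proof is complete and agrees in substance with Tits' argument.
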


\section{The $k$-closure of a group of automorphisms}
\label{sec:k-closure}

Let $x$ be an automorphism of $\tree$, $v$ a vertex and $k$ a natural number. 
Since graph automorphisms preserve distance,  $x$ 
maps $B(v,k)$ to $B(x.v,k)$. Let $x|_{B(v,k)}$ denote the map  $B(v,k)\rightarrow B(x.v,k)$ defined by $x|_{B(v,k)}.w=x.w$, which we call the  \emph{restriction of $x$ to $B(v,k)$}.
If $U\subseteq \Aut(\tree)$ let $U|_{B(v,k)}=\{x|_{B(v,k)} \mid x\in U\}$.
If $y\in\Aut(\tree)$ then \[
(yx)|_{B(v,k)}=y|_{B(x.v,k)}\circ x|_{B(v,k)}
\]
In particular, $x^{-1}|_{B(x.v,k)}\circ x|_{B(v,k)}$ and  $ x|_{B(x^{-1}.v,k)}\circ x^{-1}|_{B(v,k)}$ act trivially on $B(v,k)$.

\begin{definition}
\label{defn:k-closure}  
For $G\leq \Aut(\tree)$ and $k\in \NN$, define the \emph{$k$-closure of $G$} to be 
$$ 
G^{(k)} = \left\{ x\in \Aut(\tree) \mid  \forall\;v\in V(\tree)\;\exists\;g\in G \hbox{ such that } g|_{B(v,k)} = x|_{B(v,k)} \right\}.
$$
\end{definition}

\begin{lemma}$G^{(k)}$ is a subgroup of $\Aut(\tree)$.
\begin{proof}
If $x,y\in G^{(k)}$ and  $v$ is any vertex, then we have  $g_1,g_2\in G$ with $g_1|_{B(v,k)} = x|_{B(v,k)}$,  and $g_2|_{B(x.v,k)} = y|_{B(x.v,k)}$.
Then 
 \[
(y\circ x)|_{B(v,k)} = y|_{B(x.v,k)}\circ x|_{B(v,k)}= g_2|_{B(x.v,k)}\circ g_1|_{B(v,k)} = (g_2\circ g_1)|_{B(v,k)}\] so $y\circ x\in G^{(k)}$.
In addition we have $g\in G$ with $g|_{B(x^{-1}.v,k)} = x|_{B(x^{-1}.v,k)}$, so $x^{-1}|_{B(v,k)} = g^{-1}|_{B(v,k)}$ and hence $x^{-1}\in G^{(k)}$.
\end{proof}\end{lemma}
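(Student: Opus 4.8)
The plan is to verify the standard subgroup criterion for the subset $G^{(k)}\subseteq\Aut(\tree)$: that it is nonempty, closed under composition, and closed under inversion. Nonemptiness is immediate, since every $g\in G$ satisfies $g|_{B(v,k)}=g|_{B(v,k)}$ for all $v$, so $G\subseteq G^{(k)}$ and in particular $1\in G^{(k)}$. The whole argument then rests on the composition identity for restrictions recorded above, namely $(yx)|_{B(v,k)}=y|_{B(x.v,k)}\circ x|_{B(v,k)}$; the only real care needed is to invoke the defining property of each factor at the correct vertex.

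For closure under composition I would fix $x,y\in G^{(k)}$ and an arbitrary vertex $v$, and seek a single element of $G$ agreeing with $y\circ x$ on $B(v,k)$. Applying the definition of $G^{(k)}$ to $x$ at $v$ yields $g_1\in G$ with $g_1|_{B(v,k)}=x|_{B(v,k)}$; the key move is then to apply the definition to $y$ not at $v$ but at the translated vertex $x.v$, producing $g_2\in G$ with $g_2|_{B(x.v,k)}=y|_{B(x.v,k)}$. Substituting these into the composition identity gives $(y\circ x)|_{B(v,k)}=(g_2\circ g_1)|_{B(v,k)}$, and since $G$ is a group, $g_2\circ g_1\in G$. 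As $v$ was arbitrary, $y\circ x\in G^{(k)}$.

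For closure under inversion I would fix $x\in G^{(k)}$ and a vertex $v$, and again the guiding principle is to evaluate the defining property at a shifted vertex, here $x^{-1}.v$. Writing $w=x^{-1}.v$ (so $x.w=v$), this furnishes $g\in G$ with $g|_{B(w,k)}=x|_{B(w,k)}$. The triviality relation $x^{-1}|_{B(x.w,k)}\circ x|_{B(w,k)}=\mathrm{id}$ on $B(w,k)$ recorded above, after replacing $x|_{B(w,k)}$ by the equal map $g|_{B(w,k)}$ and post-composing with $g^{-1}$, forces $g^{-1}|_{B(v,k)}=x^{-1}|_{B(v,k)}$. Since $g^{-1}\in G$, this shows $x^{-1}\in G^{(k)}$.

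The only conceptual obstacle is the vertex-dependence built into the definition: the witness $g\in G$ matching $x$ is permitted to vary with $v$, so one cannot simply reuse a single $g$ across a product or an inverse. The composition identity neutralises this, but only if the second factor's witness is sought at $x.v$ rather than $v$ (and, dually, the inverse's witness at $x^{-1}.v$). Once this bookkeeping is arranged correctly, the remaining verifications are purely formal.
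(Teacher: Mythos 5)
Your proposal is correct and follows essentially the same route as the paper's proof: for composition you produce the witness for $y$ at the translated vertex $x.v$ and combine it with the witness for $x$ at $v$ via the restriction identity $(yx)|_{B(v,k)}=y|_{B(x.v,k)}\circ x|_{B(v,k)}$, and for inversion you take the witness for $x$ at $x^{-1}.v$ and pass to its inverse, exactly as the paper does. The only additions—explicitly noting nonemptiness via $G\subseteq G^{(k)}$ and spelling out the cancellation that yields $x^{-1}|_{B(v,k)}=g^{-1}|_{B(v,k)}$—are harmless elaborations of steps the paper leaves implicit.
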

 
The $k$-closure of $G$ consists of automorphisms of $\tree$ that on each ball of radius $k$ in $\tree$ agree with some element of $G$. It is clear from this that the local actions of $G$ and $G^{(k)}$ are identical. 

The role of the group $G$ is to provide a list of ``allowed'' actions for each ball. In this sense the construction is comparable to the \textit{universal groups} defined in \cite[\S3.2]{\BurgerMozes}. They consist of automorphisms of $\tree$ that on each ball of radius 1 perform an ``allowed'' permutation from some permutation group $F$, which is isomorphic to the local action of the group. This idea is illustrated by the following example. 

\begin{example}\label{eg:egfirst}
Consider the following subgroup $G$ of the automorphism group of the ternary tree $\tree_3$. Let $i : E(\tree_3) \to \{1,2,3\}$ be an edge-labeling where $i(\overline{e}) = i(e)$ for each edge $e$, and every vertex is incident on one edge of each label (see \cite[\S3.2]{\BurgerMozes}). Then for each $v\in V(\tree_3)$  the restriction $i|_{E(v)}$ of $i$ to $E(v)$ is a bijection on $\{1,2,3\}$, and it follows that for any automorphism $x\in\Aut(\tree_3)$ and any $v\in V(\tree_3)$ the map 
\[\pi_{x,v}=i|_{E(x.v)} \circ x \circ \left(i|_{E(v)}\right)^{-1}\] 
is a permutation of $\{1,2,3\}$. 
Let
$$
G = \left\{ x\in\Aut(\tree_3) \mid \pi_{x,v}=\pi_{x,w}\forall v,w\in V(\tree_3)\right\},
$$
be the group of automorphisms that act as the {\em same} permutation around each vertex. This is a subgroup of $\Aut(\tree_3)$ since
\begin{equation}
\label{eq:multiply}
i|_{E((yx).v)} \circ (yx) \circ \left(i|_{E(v)}\right)^{-1} =i|_{E((yx).v)} \circ y\circ\left(i|_{E(x.v)}\right)^{-1} \circ i|_{E(x.v)}\circ x \circ \left(i|_{E(v)}\right)^{-1}.
\end{equation}
Let $S_3$ denote the group of permutations of $\{1,2,3\}$, and define $\pi : G\to S_3$  by $\pi(g) = \pi_{g,v}$. This is well defined (since $\pi_{g,v}$ is the same for any $v$) and a surjective homomorphism by Equation (\ref{eq:multiply}). From this it follows that for any $v,w\in V(\tree_3)$ and for any $\sigma\in S_3$ there exists exactly one $g\in G$ such that $g.v=w$ and $\pi(g)=\sigma$. The special case when $w=v$ implies that $\stab_G(v)\cong S_3$ for all vertices $v$, and hence by Lemma \ref{lem:finite-discrete} $G$ is discrete.

Recall any automorphism $x$ of $\tree_3$ is assigned permutations $\pi_{x,v}$ for all vertices $v$. From above there always exists $g_v\in G$ with $\pi(g_v)=\pi_{x,v}$ and where $g_v$ maps $v$ to $x.v$. Now $x$ does the same permutation as $g_v$ at $v$, and hence they agree on the ball $B(v,1)$. Therefore $x\in G^{(1)}$ and so the $1$-closure of $G$ is the full automorphism group $\Aut(\tree_3)$. 

On the other hand, if an automorphism is in the $2$-closure, then it must be the same permutation around a vertex $u$ and an adjacent vertex $v$, and also the same permutation around $v$ and a third vertex $w$ next to it, which means it is the same permutation around every vertex. Hence the $2$-closure $G^{(2)}$ is equal to $G$. By a similar argument all $k$-closures are equal to $G$ for $k\geq 2$.

\end{example}

In Section \ref{sec:examples} we will see more interesting examples of groups arising from the $k$-closure construction. 
The remainder of this section records several facts about $k$-closures. The first results explain the sense in which these groups are a `closure' of $G$. 

\begin{proposition}\label{prop:basic}
Let $G\leq\Aut(\tree)$ and $k\in\NN$.
\begin{enumerate}
\item $G^{(k)}$ is a closed subgroup of $\Aut(\tree)$.
\label{prop:basic1}
\item $G^{(r)}\leq G^{(k)}$ for all $r>k$.
\label{prop:basic2}
\item $\bigcap_{k\in \NN} G^{(k)} = \overline{G}$, the closure of $G$.
\label{prop:basic3}
\item $G^{(l)} = (G^{(k)})^{(l)}$ whenever $l\leq k$.  
\label{prop:basic_idempotent}
\item The orbit $G^{(k)}.v$ is equal to $G.v$ for every $v\in V(\tree)$.
\label{prop:basic4}
\end{enumerate}
\end{proposition}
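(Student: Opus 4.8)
The plan is to reduce everything to a single observation: membership in $G^{(k)}$ is controlled locally, one ball at a time. For each vertex $v$ write $C_v=\{x\in\Aut(\tree):x|_{B(v,k)}\in G|_{B(v,k)}\}$, so that by definition $G^{(k)}=\bigcap_{v\in V(\tree)}C_v$. Two elementary facts will be used throughout: first, $G\subseteq G^{(k)}$, since each $g\in G$ witnesses its own membership on every ball; second, the closure operation is monotone, i.e.\ $G\subseteq H$ implies $G^{(l)}\subseteq H^{(l)}$, because a witness in $G$ for $x$ on $B(v,l)$ is \emph{a fortiori} a witness in $H$.

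For (i) I would argue that each $C_v$ is clopen. Because $\tree$ is locally finite, $B(v,k)$ is a finite subtree, so $G|_{B(v,k)}$ is a finite set of restrictions; for each such restriction the set of $x$ realising it is a basic open set $\mathscr{U}$ of the topology, and $C_v$ is the finite union of these, hence open, while its complement is the union of the basic open sets attached to the (finitely many) restrictions \emph{not} in $G|_{B(v,k)}$, hence also open. Thus each $C_v$ is closed, and $G^{(k)}=\bigcap_v C_v$ is closed; combined with the subgroup lemma already proved this gives (i). Part (ii) is immediate from the nesting $B(v,k)\subseteq B(v,r)$ for $k<r$: a witness agreeing with $x$ on the larger ball agrees with it on the smaller one.

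Part (iii) I would split into two inclusions. Since each $G^{(k)}$ is closed by (i) and contains $G$, it contains $\overline{G}$, giving $\overline{G}\subseteq\bigcap_k G^{(k)}$. For the reverse, take $x$ in every $G^{(k)}$ and any basic neighbourhood $\mathscr{U}(x,\mathcal{F})$ with $\mathcal{F}$ finite; choosing $k$ so large that $\mathcal{F}\subseteq B(v_0,k)$ for some fixed $v_0$, membership $x\in G^{(k)}$ supplies a $g\in G$ agreeing with $x$ on $B(v_0,k)$, hence on $\mathcal{F}$, so $g\in\mathscr{U}(x,\mathcal{F})\cap G$ and therefore $x\in\overline{G}$. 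Part (v) is the degenerate case $\mathcal{F}=\{v\}$ of the same idea: for $x\in G^{(k)}$ a witness $g$ agrees with $x$ on $B(v,k)$, and evaluating at the centre $v\in B(v,k)$ gives $x.v=g.v\in G.v$; the reverse inclusion is again $G\subseteq G^{(k)}$.

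The one place to be careful is (iv), where the hypothesis $l\le k$ does real work. Monotonicity together with $G\subseteq G^{(k)}$ gives $G^{(l)}\subseteq(G^{(k)})^{(l)}$. For the reverse, take $x\in(G^{(k)})^{(l)}$ and a vertex $v$: there is $h\in G^{(k)}$ with $h|_{B(v,l)}=x|_{B(v,l)}$, and since $h\in G^{(k)}$ there is $g\in G$ with $g|_{B(v,k)}=h|_{B(v,k)}$. The nesting $B(v,l)\subseteq B(v,k)$, valid precisely because $l\le k$, then forces $g|_{B(v,l)}=x|_{B(v,l)}$, so $x\in G^{(l)}$. I expect the main obstacle to be nothing more than keeping the direction of this ball inclusion straight: the inner closure radius $l$ must not exceed the outer radius $k$, which is exactly the stated constraint, and the chaining of witnesses would break down for $l>k$.
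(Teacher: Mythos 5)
Your proof follows essentially the same route as the paper's for every part: (ii), (iii), (iv) and (v) are, step for step, the arguments the paper gives (your chaining of witnesses in (iv), using $B(v,l)\subseteq B(v,k)$, is exactly the paper's, and in fact you state that ball inclusion the right way around where the paper's text contains a typo reversing it), and your decomposition $G^{(k)}=\bigcap_v C_v$ with each $C_v$ clopen is a mild repackaging of the paper's proof of (i), which exhibits the complement of $G^{(k)}$ directly as a union of basic open sets $\mathscr{U}(x,B(v_x,k))$. There is, however, one false claim in your treatment of (i): it is not true that $G|_{B(v,k)}$ is a finite set of restrictions. A restriction $g|_{B(v,k)}$ is a map from the finite ball $B(v,k)$ into the whole (infinite) tree, not a permutation of $B(v,k)$; if, say, $G$ acts vertex-transitively and $g_w\in G$ is chosen with $g_w.v=w$ for each vertex $w$, the restrictions $g_w|_{B(v,k)}$ are pairwise distinct because they send $v$ to different vertices, so $G|_{B(v,k)}$ is infinite. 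The same goes for the set of restrictions \emph{not} lying in $G|_{B(v,k)}$.

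Fortunately the finiteness plays no role and the step does not fail, because arbitrary unions of open sets are open. Indeed $C_v=\bigcup_{g\in G}\mathscr{U}(g,B(v,k))$ is open, and its complement equals $\bigcup_{x\notin C_v}\mathscr{U}(x,B(v,k))$: any $y$ agreeing with such an $x$ on $B(v,k)$ has $y|_{B(v,k)}=x|_{B(v,k)}\notin G|_{B(v,k)}$, hence $y\notin C_v$. So each $C_v$ is clopen and $G^{(k)}$ is closed. With ``finite union'' replaced by ``union'' in both places, your argument for (i) coincides with the paper's, and the rest of your proof stands as written.
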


\begin{proof} (i) Since 
\begin{eqnarray*}
\Aut(\tree)\backslash G^{(k)}&=&\{x\in\Aut(\tree):\exists v_x\in V(\tree)\mbox{ with }x|_{B(v_x,k)} \neq g|_{B(v_x,k)}\forall g\in G\}\\
&=&\bigcup_{x\notin G^{(k)}}\mathscr{U}(x,B(v_x,k))
\end{eqnarray*}
is the union of open sets, $G^{(k)}$ is closed.

(ii)  If $x\in G^{(r)}$ then for every vertex $v$ there is some $g\in G$ with  $g|_{B(v,r)} = x|_{B(v,r)}$, and since $r>k$ we have  $g|_{B(v,k)} = x|_{B(v,k)}$.

(iii) Since $g\in G$ agrees with itself everywhere,  $G$ is contained in $G^{(k)}$ for every~$k$. Thus $\bigcap_{k\in \NN} G^{(k)}$  contains $G$, and is closed by~(\ref{prop:basic1}), and so  $\bigcap_{k\in \NN} G^{(k)}\supseteq\overline{G}$. For the reverse inclusion it is enough to show that any open set containing $x\in\bigcap_{k\in \NN} G^{(k)}$ also contains some $g\in G$, which holds because the sets $\mathscr{U}(x,B(v,k))$ ($v\in V(\tree)$, $k\in\NN$) form a basis for the subspace topology on $\bigcap_{k\in \NN} G^{(k)}$ and each contains at least one $g\in G$ such that $g|_{B(v,k)} = x|_{B(v,k)}$. 

(iv) Since $G\leq G^{(k)}$ then $G^{(l)}\leq(G^{(k)})^{(l)}$. Suppose $x\in(G^{(k)})^{(l)}$, then for all $v\in V(\tree)$ there exist $y_v\in G^{(k)}$ such that $y_v|_{B(v,l)} = x|_{B(v,l)}$. Note that since $l\leq k$ we have $B(v,l)\supseteq B(v,k)$. Since $y_v\in G^{(k)}$ there exists $g_v\in G$ such that $y_v|_{B(v,l)}=g|_{B(v,l)}$. Then $x|_{B(v,l)}=g|_{B(v,l)}$ which implies that $x\in G^{(l)}$.

(v) Since every $x.v\in G^{(k)}.v$ satisfies $x.v=g.v$ for some $g\in G$, we know that $G^{(k)}.v$ is contained in $G.v$; equality follows because $G\leq G^{(k)}$.
\end{proof}

The question of what happens to $(G^{(k)})^{(l)}$ when $l>k$ is more subtle, and will be discussed in Section \ref{sec:PropertyPk}.

The next result gives a criterion for when the $k$-closure of a group is non-discrete. This will prove to be necessary when attempting to construct new examples of simple groups.
\begin{theorem}
\label{thm:non-discrete}
Let $G\leq \Aut(\tree)$, fix $k\in \NN$ and suppose that $G$ does not stabilise any proper subtree of $\tree$. Then $G^{(k)}$ is non-discrete if and only if there is $(v,w)\in E(\tree)$ and $g\in G$ such that
\begin{equation}
\label{eq:non-discrete}
g|_{B(v,k)\cap B(w,k)} = \ident\ \hbox{ and } \ g|_{B(w,k)}\ne \ident.
\end{equation}
Equivalently, $G^{(k)}$ is discrete if and only if $\Fix_G(B(v,k)\cap B(w,k))=\{\ident\}$ for every $(v,w)\in E(\tree)$.
\end{theorem}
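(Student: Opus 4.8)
The plan is to reduce everything to the behaviour of ball-stabilisers and then prove the two implications separately. By Lemma~\ref{lem:finite-discrete}, together with the fact that each $\Fix_{G^{(k)}}(v)$ is profinite (an inverse limit of the finite groups acting on balls about $v$), I will use two observations: (a) if $\Fix_{G^{(k)}}(B(v,k))=\triv$ for a single vertex $v$, then $\Fix_{G^{(k)}}(v)$ is a profinite group with a trivial open subgroup, hence finite, so $G^{(k)}$ is discrete; and (b) if $\Fix_{G^{(k)}}(B(u,r))\ne\triv$ for every vertex $u$ and every radius $r$, then no vertex stabiliser is finite and $G^{(k)}$ is non-discrete. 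It therefore suffices to show that the failure of \eqref{eq:non-discrete} at every edge produces the situation in (a), while its success at one edge produces the situation in (b).

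For the first direction I would first note that, by a short propagation argument, the failure of \eqref{eq:non-discrete} at every edge is equivalent to $\Fix_G(B(v,k)\cap B(w,k))=\triv$ for every edge $(v,w)$: if $g\in G$ fixes such an intersection then failure at $(v,w)$ and at $(w,v)$ forces $g$ to fix both $B(v,k)$ and $B(w,k)$, and iterating over adjacent edges spreads this across the connected tree until $g=\ident$. Assuming now $\Fix_G(B(v,k)\cap B(w,k))=\triv$ for all edges, I claim any $x\in G^{(k)}$ fixing some $B(v,k)$ is trivial. Indeed, for a neighbour $w$ of $v$ choose $g\in G$ with $g|_{B(w,k)}=x|_{B(w,k)}$; since $B(v,k)\cap B(w,k)$ lies in both $B(v,k)$ and $B(w,k)$, and $x$ fixes it, so does $g$, whence $g=\ident$ and $x$ fixes $B(w,k)$. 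Connectedness and induction give $x=\ident$, so $\Fix_{G^{(k)}}(B(v,k))=\triv$ and we are in case (a). This direction uses nothing about subtrees.

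For the second direction, suppose \eqref{eq:non-discrete} holds for an edge $(v,w)$ and $g\in G$. As $v,w\in B(v,k)\cap B(w,k)$, the element $g$ fixes the edge and each semi-tree, so I can define $x$ to be the identity on $\tree_{(w,v)}$ and to coincide with $g$ on $\tree_{(v,w)}$; these patch to an automorphism because both pieces fix $v$ and $w$. The crucial verification is $x\in G^{(k)}$: for $u\in\tree_{(w,v)}$ the portion of $B(u,k)$ reaching into $\tree_{(v,w)}$ is contained in $B(v,k)\cap B(w,k)$, where $g=\ident$, so $x|_{B(u,k)}=\ident|_{B(u,k)}$; symmetrically $x|_{B(u,k)}=g|_{B(u,k)}$ for $u\in\tree_{(v,w)}$. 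Thus every restriction of $x$ to a $k$-ball agrees with $\ident$ or with $g$, both in $G$. Since $x|_{B(w,k)}=g|_{B(w,k)}\ne\ident$ we have $x\ne\ident$, and by construction $x$ fixes the whole semi-tree $\tree_{(w,v)}$. To reach case (b), given any $u_0$ and $r$ I would use that $G$ stabilises no proper subtree: by Lemma~\ref{lem:T4.1} the orbit $G.u_0$ meets every semi-tree, so there is $b\in G$ with $b.u_0\in\tree_{(w,v)}$ at distance at least $r$ from $v$; then $B(b.u_0,r)\subseteq\tree_{(w,v)}$, since the nearest vertex outside this semi-tree is $w$, at distance at least $r+1$ from $b.u_0$. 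Putting $a=b^{-1}$, the conjugate $axa^{-1}\in G^{(k)}$ is nontrivial and fixes $a.\tree_{(w,v)}\supseteq B(u_0,r)$, giving $\Fix_{G^{(k)}}(B(u_0,r))\ne\triv$ for all $u_0,r$.

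I expect two steps to carry the real content. The first is checking that the patched map $x$ genuinely lies in $G^{(k)}$; this is exactly where the radius of $B(v,k)\cap B(w,k)$ is decisive, since that intersection is precisely the overlap seen by any $k$-ball straddling the edge $(v,w)$. The second, and the main obstacle, is upgrading a single nontrivial ball-stabiliser to nontriviality at \emph{every} basepoint and radius. A naive attempt to slide $x$ outward along an edge runs into an orientation problem; the clean remedy is the reverse move above, namely transporting the target ball $B(u_0,r)$ deep into the fixed semi-tree $\tree_{(w,v)}$, which is exactly what Lemma~\ref{lem:T4.1} delivers from the no-proper-subtree hypothesis.
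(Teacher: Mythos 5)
Your proposal is correct and takes essentially the same approach as the paper: the converse uses the identical patched automorphism (equal to $g$ on $\tree_{(v,w)}$, trivial on $\tree_{(w,v)}$), the identical verification via $k$-balls straddling the edge meeting the far side only inside $B(v,k)\cap B(w,k)$, and the identical application of Lemma~\ref{lem:T4.1} to conjugate the fixed semi-tree over an arbitrary ball, while your discreteness direction is the paper's argument run in contrapositive form (your propagation of triviality along edges is exactly the paper's proof of its final claim). The only cosmetic difference is that you pass through Lemma~\ref{lem:finite-discrete} and chains of ball-fixators, where the paper directly exhibits non-identity elements of $G^{(k)}$ fixing each $B(u,m)$.
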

\begin{proof} 
If $G^{(k)}$ is non-discrete, then for any vertex $u$ there is a non-identity element $h\in \stab_{G^{(k)}}(u)$ such that $h|_{B(u,k)} = \ident$. Since $\tree\setminus\{u\} = \bigcup_{t\in B(u,1)} \tree_{(u,t)}$, there exists a vertex $t$ adjacent to $u$ such that $h|_{\tree_{(u,t)}}\ne \ident$. There are two cases; in the first case $h|_{B(u,k)}=\ident$ and $h|_{B(t,k)}\neq 1$, so set $(v,w)=(u,t)$. Otherwise there must be an edge $(v,w)$ in $\tree_{(u,t)}$ such that $h|_{B(v,k)} = \ident$ and $h|_{B(w,k)} \ne \ident$. Choosing $g\in G$ such that $h|_{B(w,k)} = g|_{B(w,k)}$ shows that (\ref{eq:non-discrete}) holds.

For the converse, assume that (\ref{eq:non-discrete}) holds for some $(v,w)\in E(\tree)$ and let $u\in V(\tree)$ and $m\in \NN$. It will be shown that there is a non-identity element $h\in G^{(k)}$ with $h|_{B(u,m)} = \ident$. Note first of all that, if $t$ is any vertex in $\tree_{(v,w)}$, then $B(v,k)\cap B(t,k) \subseteq B(v,k)\cap B(w,k)$ and, if $t$ is any vertex in $\tree_{(w,v)}$, then $B(w,k)\cap B(t,k) \subseteq B(w,k)\cap B(v,k)$. Since $g|_{B(v,k)\cap B(w,k)} = \ident$, the element $h_1$ defined by
$$
h_1|_{\tree_{(v,w)}} = g|_{\tree_{(v,w)}} \hbox{ and }
h_1|_{\tree_{(w,v)}} = \ident|_{\tree_{(w,v)}}
$$
is contained in $G^{(k)}$. Since no proper sub-tree of $\tree$ is invariant under $G$, Lemma \ref{lem:T4.1} implies that the orbit $G.u$ intersects every semi-tree of $\tree$. Choose a semi-tree $\tree_{(r,s)}$ that is contained in $\tree_{(w,v)}$ and such that $d(v,r) \geq m$ and choose $x\in G$ such that $x.u\in \tree_{(r,s)}$. Then $B(x.u,m)\cap \tree_{(v,w)} = \emptyset$ and it follows that $h = xh_1x^{-1}$ belongs to $G^{(k)}$, is non-trivial and $h|_{B(u,m)} = \ident$.

For the final claim, note that, if $G^{(k)}$ is not discrete, then $\Fix_G(B(v,k)\cap B(w,k))$ is not trivial for the edge $(v,w)$ in~\eqref{eq:non-discrete}. On the other hand, if $G^{(k)}$ is discrete, then $g|_{B(v,k)\cap B(w,k)} = \ident$ implies that $g|_{B(w,k)} = \ident$ for every $g\in G$ and $(v,w)\in E(\tree)$. Hence, if $g|_{B(v,k)\cap B(w,k)} = \ident$, then $g|_{B(v',k)\cap B(w,k)} = \ident$ for every $v'$ adjacent to $w$, which in turn implies that $g|_{B(v',k)} = \ident$ for every $v'$ adjacent to $w$, whence $g|_{B(v',k+1)} = \ident$. Continuing by induction shows that $g|_{B(v',j)} = \ident$ for every $j>k$, that is, that $g = \ident$. 
\end{proof}

\begin{figure}[ht]\begin{center}
\includegraphics[height=5cm]{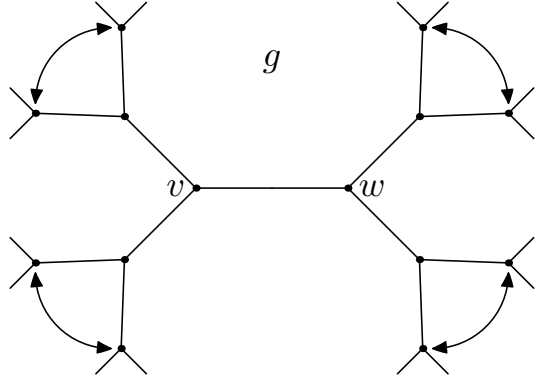}
\caption{An automorphism $f\in G$ that satisfies Equation \eqref{eq:non-discrete} in Theorem \ref{thm:non-discrete} for $k=2$. In the proof the corresponding $h_1\in G^{(k)}$ is defined to fix the left semi-tree and agree with $f$ on the right semi-tree.}
\label{fig:non-discrete}
\end{center}\end{figure}

\begin{corollary}\label{cor:nondiscrete} Suppose $G\leq \Aut(\tree)$ acts with finitely many orbits on $\tree$ and does not stabilise any proper non-empty subtree. Then $G$ is non-discrete if and only if $G^{(k)}$ is non-discrete for infinitely many (and hence all) $k\in\NN$.
\begin{proof} Suppose that $G^{(k)}$ is non-discrete for infinitely many $k$. Since the action of~$G$ on $\tree$ is co-compact, there are only finitely many $G$-orbits in $E(\tree)$. By the pigeonhole principle it may be assumed when applying Theorem~\ref{thm:non-discrete} that the edge $(v,w)$ is always the same. Then the theorem gives an infinite number of elements $g\in G$ that fix $w$. On the other hand, if there is any $j\in\NN$ for which $G^{(j)}$ is discrete then every subgroup, in particular $G$ (and each $G^{(k)},k>j$), is discrete.
\end{proof}\end{corollary}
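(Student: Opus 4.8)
The plan is to regard the corollary as a bookkeeping consequence of Theorem~\ref{thm:non-discrete}, organised through Lemma~\ref{lem:finite-discrete}, which converts (non-)discreteness of $G$ into the (in)finiteness of a vertex stabiliser $\Fix_G(v)$. First I would clear away the easy implications. Since $G\le G^{(k)}$ for every $k$ (immediate from Definition~\ref{defn:k-closure}) and any subgroup of a discrete group is discrete, non-discreteness of $G$ forces non-discreteness of every $G^{(k)}$; and ``for all $k$'' trivially gives ``for infinitely many $k$''. In the other direction, because $G^{(k)}\le G^{(j)}$ whenever $k>j$ (Proposition~\ref{prop:basic}(\ref{prop:basic2})), a single discrete $G^{(j)}$ would force $G$ and all later $G^{(k)}$ to be discrete. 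Hence the whole statement reduces to the single implication: if $G^{(k)}$ is non-discrete for infinitely many $k$, then $G$ is non-discrete.

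For this implication I would, for each $k$ in the infinite index set, apply the ``only if'' half of Theorem~\ref{thm:non-discrete} (legitimate since $G$ stabilises no proper non-empty subtree) to obtain an edge $(v_k,w_k)$ and an element $g_k\in G$ with $g_k|_{B(v_k,k)\cap B(w_k,k)}=\ident$ and $g_k|_{B(w_k,k)}\ne\ident$. The co-compactness hypothesis enters here: as there are only finitely many $G$-orbits on $E(\tree)$, the pigeonhole principle yields an infinite sub-collection of indices for which all the edges $(v_k,w_k)$ lie in a common orbit. Conjugating each $g_k$ by an element of $G$ carrying $(v_k,w_k)$ to one fixed edge $(v,w)$ --- and using that automorphisms send balls to balls, so the two defining conditions survive conjugation --- produces elements $h_k\in G$ satisfying $h_k|_{B(v,k)\cap B(w,k)}=\ident$ and $h_k|_{B(w,k)}\ne\ident$.

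Now $B(v,k-1)\subseteq B(v,k)\cap B(w,k)$, so each $h_k$ fixes $B(v,k-1)$ and in particular lies in $\Fix_G(v)$, while its non-triviality on $B(w,k)$ forces it to move some vertex at distance exactly $k+1$ from $v$; comparing for different indices the radii up to which the $h_k$ are trivial shows that they furnish infinitely many distinct elements of $\Fix_G(v)$, so $\Fix_G(v)$ is infinite and $G$ is non-discrete by Lemma~\ref{lem:finite-discrete}. I expect the main obstacle to be exactly this middle passage: turning the $k$-indexed witnesses for non-discreteness of the various $G^{(k)}$ into infinitely many distinct elements of one fixed stabiliser, which requires both the pigeonhole-and-conjugate step and the verification that the resulting elements are genuinely distinct. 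A supporting point worth recording is that finiteness of $\Fix_G(v)$ does not depend on $v$ --- any two vertex stabilisers are commensurable in a locally finite tree, since $\Fix_G(\{v,w\})$ has finite index in each of $\Fix_G(v)$ and $\Fix_G(w)$ --- which is what lets Lemma~\ref{lem:finite-discrete} be applied at the single vertex $v$ we have isolated.
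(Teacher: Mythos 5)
Your proof is correct and follows essentially the same route as the paper's: pigeonhole on the finitely many $G$-orbits of edges, conjugate the witnesses from Theorem~\ref{thm:non-discrete} to a single fixed edge so as to produce infinitely many elements of one vertex stabiliser, and handle the converse via $G\leq G^{(k)}\leq G^{(j)}$ together with the fact that subgroups of discrete groups are discrete. The only difference is that you make explicit some details the paper leaves implicit (the conjugation step, the pairwise distinctness of the elements $h_k$, and the commensurability of vertex stabilisers needed to invoke Lemma~\ref{lem:finite-discrete} at a single vertex).
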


\begin{corollary}\label{cor:discrete} Suppose $G\leq\Aut(\tree)$ does not stabilise any proper subtree of $\tree$ and also suppose that $G^{(k)}$ is discrete for some $k\in\NN$. Then $G^{(k)}=G$.
\end{corollary}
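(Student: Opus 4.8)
The plan is to show that, under the hypotheses, the descending chain of closures has already stabilised at radius $k$, so that $G^{(k)}=\overline G$, and then to deduce $G^{(k)}=G$ from the fact that a dense subgroup of a discrete group is the whole group. First I would record the rigidity that discreteness supplies. Since $G$ stabilises no proper subtree and $G^{(k)}$ is discrete, the equivalent formulation in Theorem~\ref{thm:non-discrete} gives $\Fix_G(B(v,k)\cap B(w,k))=\triv$ for every edge $(v,w)\in E(\tree)$. I would restate this as follows: if $g_0,g_1\in G$ agree on $B(v,k)\cap B(w,k)$, then $g_1^{-1}g_0$ fixes that set pointwise, so $g_1^{-1}g_0\in\Fix_G(B(v,k)\cap B(w,k))=\triv$ and hence $g_0=g_1$.

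The heart of the argument is to prove $G^{(k)}=G^{(k+1)}$. The inclusion $G^{(k+1)}\leq G^{(k)}$ is Proposition~\ref{prop:basic}(\ref{prop:basic2}), so only $G^{(k)}\leq G^{(k+1)}$ needs work. Given $x\in G^{(k)}$ and a vertex $v$, choose $g_0\in G$ with $g_0|_{B(v,k)}=x|_{B(v,k)}$, and for each neighbour $w$ of $v$ choose $g_w\in G$ with $g_w|_{B(w,k)}=x|_{B(w,k)}$. Both $g_0$ and $g_w$ agree with $x$, hence with each other, on $B(v,k)\cap B(w,k)$, so the rigidity statement forces $g_0=g_w$; in particular the single element $g_0$ agrees with $x$ on $B(w,k)$ for \emph{every} neighbour $w$. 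Since each vertex of $B(v,k+1)$ lies either in $B(v,k)$ or, when it is at distance $k+1$ from $v$, in $B(w,k)$ for the neighbour $w$ on the geodesic towards it, I conclude $g_0|_{B(v,k+1)}=x|_{B(v,k+1)}$. As $v$ was arbitrary, $x\in G^{(k+1)}$.

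Iterating the key step gives $G^{(k)}=G^{(j)}$ for all $j\geq k$; combined with Proposition~\ref{prop:basic}(\ref{prop:basic2}), which also gives $G^{(k)}\leq G^{(j)}$ for $j<k$, this yields $G^{(k)}=\bigcap_{j\in\NN}G^{(j)}=\overline G$ by Proposition~\ref{prop:basic}(\ref{prop:basic3}). Finally, $G$ is dense in $\overline G=G^{(k)}$ by definition of the closure, while $G^{(k)}$ is discrete by hypothesis; a dense subset of a discrete space must be the whole space, so $G=\overline G=G^{(k)}$, as required.

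I expect the main obstacle to be the bookkeeping in the key step: one must check that the element $g_0$, which depends only on $v$, genuinely coincides with every neighbour's $g_w$, and that the lens $B(v,k)\cap B(w,k)$ sits inside both $B(v,k)$ and $B(w,k)$ so the agreements can be compared there. Once the rigidity $\Fix_G(B(v,k)\cap B(w,k))=\triv$ is available, extending agreement from radius $k$ to radius $k+1$ is a routine geometric step, and the passage to $\overline G$ and then to $G$ is purely formal.
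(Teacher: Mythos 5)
Your proof is correct, and its engine is exactly the paper's: compare the two local witnesses $g_v,g_w\in G$ across an edge, note that $g_v^{-1}g_w$ fixes $B(v,k)\cap B(w,k)$, and kill it using the triviality of that fixator supplied by the final claim of Theorem \ref{thm:non-discrete}. Where you differ is in how the conclusion is assembled. The paper propagates in a single pass: by connectedness of $\tree$, the witnesses $g_v$ for \emph{all} vertices coincide with one $g\in G$, so $x$ agrees with $g$ on every ball of radius $k$ and hence $x=g\in G$ outright, giving $G^{(k)}\leq G$ directly. You instead propagate only to the neighbours of $v$ (getting $x\in G^{(k+1)}$, hence $G^{(k)}=G^{(k+1)}$), iterate to obtain $G^{(k)}=G^{(j)}$ for all $j\geq k$, identify $G^{(k)}=\bigcap_{j}G^{(j)}=\overline G$ via Proposition \ref{prop:basic}(\ref{prop:basic3}), and finish with the dense-in-discrete observation; this is a genuine detour through the chain-stabilisation picture that Theorem \ref{thm:PkandGk} later formalises, at the cost of two extra (easy) steps. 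One of them deserves a line in a final write-up: iterating the key step at radius $j>k$ requires $\Fix_G(B(v,j)\cap B(w,j))=\triv$, which holds because $B(v,k)\cap B(w,k)\subseteq B(v,j)\cap B(w,j)$ forces the larger lens to have the smaller fixator (alternatively, reapply Theorem \ref{thm:non-discrete} at level $j$, since $G^{(j)}=G^{(k)}$ is discrete by hypothesis). Note also that once you have $g_0=g_w$ for every neighbour $w$ of $v$, connectedness lets you continue the same comparison along the whole tree and conclude $x=g_0\in G$ at once, which collapses your remaining steps into the paper's single one.
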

\begin{proof} By Proposition \ref{prop:basic}(\ref{prop:basic3}) we have that $G\leq G^{(k)}$. Now suppose $x$ is in $G^{(k)}$ and let $(v,w)\in E(\tree)$. By definition of $G^{(k)}$, there exist $g_v,g_w\in G$ such that $g_v|_{B(v,k)} = x|_{B(v,k)}$ and $g_w|_{B(w,k)} = x|_{B(w,k)}$. Then $g_v$ and $g_w$ have the same action on $B(v,k)\cap B(w,k)$  and hence $g_v^{-1}g_w$ fixes $B(v,k)\cap B(w,k)$. Since $G^{(k)}$ is discrete Theorem~\ref{thm:non-discrete} implies that $g_v^{-1}g_w$ is the identity automorphism; that is, $g_v=g_w$. Applying this to every edge, it follows that $x$ agrees with the same element of $G$ on every ball of radius $k$, and hence $x\in G$.
\end{proof}

The last results in this section gives general criteria for two subgroups of $\Aut(\tree)$ to produce the same $k$-closures. 

\begin{proposition}\label{prop:kequal} Two groups $G,H\leq\Aut(\tree)$ satisfy $G^{(k)}=H^{(k)}$ if and only if for each $g\in G$ and $v\in V(\tree)$ there is $h_v\in H$ such that $(h_v^{-1}g)|_{B(v,k)}$ belongs to $\stab_H(v)|_{B(v,k)}$, and similarly for $h\in H$.
\end{proposition}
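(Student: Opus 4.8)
\emph{Proof plan.} The crux is to show that the stabiliser condition in the statement is, for a fixed $g$ and $v$, merely a repackaging of the more transparent \emph{agreement condition}: there exists $h\in H$ with $g|_{B(v,k)}=h|_{B(v,k)}$. Once this pointwise equivalence is in hand, the proposition follows by quantifying over all $g\in G$ and $v\in V(\tree)$ and invoking symmetry.

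First I would record the link between the agreement condition and an inclusion of closures. By definition an automorphism lies in $H^{(k)}$ exactly when it agrees on each ball $B(v,k)$ with some element of $H$; hence the agreement condition holding for every $g\in G$ and every $v$ says precisely that $G\subseteq H^{(k)}$, and I claim this is in turn equivalent to $G^{(k)}\subseteq H^{(k)}$. The forward implication is immediate from $G\subseteq G^{(k)}$. For the converse, given $x\in G^{(k)}$ and a vertex $v$, pick $g\in G$ with $x|_{B(v,k)}=g|_{B(v,k)}$; if $G\subseteq H^{(k)}$ there is $h\in H$ with $g|_{B(v,k)}=h|_{B(v,k)}$, so $x|_{B(v,k)}=h|_{B(v,k)}$, and as $v$ was arbitrary $x\in H^{(k)}$.

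The technical step is the pointwise equivalence of the stabiliser and agreement conditions, which I would establish using the identity $(yx)|_{B(v,k)}=y|_{B(x.v,k)}\circ x|_{B(v,k)}$ together with the fact that $x^{-1}|_{B(x.v,k)}\circ x|_{B(v,k)}=\ident|_{B(v,k)}$. If the agreement condition holds, choose $h\in H$ with $g|_{B(v,k)}=h|_{B(v,k)}$ and set $h_v=h$; then $g.v=h_v.v$, so the identity gives $(h_v^{-1}g)|_{B(v,k)}=\ident|_{B(v,k)}$, which lies in $\stab_H(v)|_{B(v,k)}$ since $\ident\in\stab_H(v)$. Conversely, suppose there are $h_v\in H$ and $h'\in\stab_H(v)$ with $(h_v^{-1}g)|_{B(v,k)}=h'|_{B(v,k)}$. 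As $h'$ fixes $v$, this forces $g.v=h_v.v$; putting $h=h_vh'\in H$, expanding $h|_{B(v,k)}=h_v|_{B(v,k)}\circ h'|_{B(v,k)}$ (using $h'.v=v$) and then substituting $h'|_{B(v,k)}=h_v^{-1}|_{B(g.v,k)}\circ g|_{B(v,k)}$, the two copies of $h_v$ cancel via the same identity, leaving $h|_{B(v,k)}=g|_{B(v,k)}$, so the agreement condition holds.

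Combining the three pieces, the stabiliser condition for all $g\in G$ and $v$ is equivalent to the agreement condition for all such $g,v$, which is equivalent to $G^{(k)}\subseteq H^{(k)}$; symmetrically the stabiliser condition for all $h\in H$ is equivalent to $H^{(k)}\subseteq G^{(k)}$. Hence both conditions together hold if and only if $G^{(k)}=H^{(k)}$. I expect the main obstacle to be the bookkeeping in the pointwise equivalence, specifically verifying that the stabiliser condition forces $g.v=h_v.v$ so that the restriction maps compose with matching domains and the cancellations are legitimate; everything else is routine.
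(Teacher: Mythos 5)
Your proof is correct and takes essentially the same approach as the paper: the key step in both is that the stabiliser condition at a pair $(g,v)$ amounts to the existence of an element of $H$ (your $h_vh'$, the paper's $h_va$) agreeing with $g$ on $B(v,k)$, combined with the routine passage from $G\subseteq H^{(k)}$ to $G^{(k)}\subseteq H^{(k)}$, which the paper performs inline via the auxiliary element $g_v$. The only blemish is that in your first paragraph the labels ``forward'' and ``converse'' are swapped relative to the equivalence as you stated it, but both implications are in fact proven, so nothing is missing.
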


\begin{proof} 
Assume that $G^{(k)}=H^{(k)}$ and consider $g\in G$ and $v\in V(\tree)$. Since $g\in G^{(k)}$ there exists $h_v\in H$ such that $h_v|_{B(v,k)} = g|_{B(v,k)}$. Hence $(h_v^{-1}g)|_{B(v,k)} = \ident$, which belongs to $\stab_G(v)|_{B(v,k)}$. A similar argument applies for each $h\in H$. 

Conversely, assume that for each $g\in G$ and $v\in V(\tree)$ there is $h_v\in H$ such that $(h_v^{-1}g)|_{B(v,k)}$ belongs to $\stab_H(v)|_{B(v,k)}$ and consider $x\in G^{(k)}$. For each $v\in V(\tree)$ there is $g_v\in G$ such that $g_v|_{B(v,k)} = x|_{B(v,k)}$ and then, by assumption, there is $h_v\in H$ such that  $(h_v^{-1}g_v)|_{B(v,k)} \in \stab_H(v)|_{B(v,k)}$. Suppose that $(h_v^{-1}g_v)|_{B(v,k)} = a|_{B(v,k)}$ with $a\in H$. Then $h_va\in H$ and $h_va|_{B(v,k)} = x|_{B(v,k)}$. Hence $x\in H^{(k)}$ and it has been shown that $G^{(k)}\leq H^{(k)}$. That $H^{(k)}\leq G^{(k)}$ may be shown similarly. 
\end{proof}

\begin{corollary}\label{cor:kequal}
Suppose we have two groups $G,H\leq\Aut(\tree)$. If
\begin{enumerate}
\item \label{prop:kequal1}
$\stab_G{v}|_{B(v,k)}=\stab_H{v}|_{B(v,k)}$ for all $v\in V(\tree)$; and 
\item \label{prop:kequal2} 
$G,H$ and $G\cap H$ act on $\tree$ with the same orbits,
\end{enumerate}
then $G^{(k)}=H^{(k)}$.
\end{corollary}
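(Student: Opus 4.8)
The plan is to reduce the statement to Proposition \ref{prop:kequal}, which replaces the equality $G^{(k)}=H^{(k)}$ with a purely pointwise condition: for every $g\in G$ and every vertex $v$ one must exhibit $h_v\in H$ with $(h_v^{-1}g)|_{B(v,k)}\in\stab_H(v)|_{B(v,k)}$, together with the symmetric statement for each $h\in H$. Both hypotheses (i) and (ii) are symmetric under interchanging $G$ and $H$ (hypothesis (i) is manifestly so, and hypothesis (ii) mentions $G$, $H$ and $G\cap H$ symmetrically), so I would only verify the condition for $g\in G$; the argument for $H$ is then verbatim the same.

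Fix $g\in G$ and $v\in V(\tree)$. The first step is to invoke hypothesis (ii): since $g.v\in G.v$ and $G$ and $G\cap H$ act with the same orbits, $g.v$ lies in $(G\cap H).v$, so I can choose $c\in G\cap H$ with $c.v=g.v$. The reason for selecting $c$ inside the intersection, rather than merely in $H$, is that this makes $c^{-1}g$ lie in $G$ (as both $c$ and $g$ are in $G$) while keeping $c$ itself in $H$.

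Next, I would observe that $c^{-1}g$ fixes $v$, since $(c^{-1}g).v=c^{-1}(c.v)=v$, so $c^{-1}g\in\stab_G(v)$. Applying hypothesis (i) gives
\[
(c^{-1}g)|_{B(v,k)}\in\stab_G(v)|_{B(v,k)}=\stab_H(v)|_{B(v,k)}.
\]
Setting $h_v=c\in H$ then yields $(h_v^{-1}g)|_{B(v,k)}=(c^{-1}g)|_{B(v,k)}\in\stab_H(v)|_{B(v,k)}$, which is exactly the condition required by Proposition \ref{prop:kequal}. Running the symmetric argument with the roles of $G$ and $H$ exchanged completes the verification, and the proposition delivers $G^{(k)}=H^{(k)}$.

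I expect the only conceptual subtlety to be seeing why hypothesis (ii) must include $G\cap H$ and not just that $G$ and $H$ share orbits. The construction needs a \emph{single} element lying in both groups that carries $v$ to $g.v$: after cancelling it, the leftover factor $c^{-1}g$ must remain in $G$ so that hypothesis (i) can be applied, while the cancelled factor $c$ must remain in $H$ so that it is an admissible choice of $h_v$. Once this is recognized, the remainder is a direct substitution into the criterion of Proposition \ref{prop:kequal}, with no genuine computation required.
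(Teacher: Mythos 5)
Your proposal is correct and follows essentially the same route as the paper: both reduce to the criterion of Proposition \ref{prop:kequal}, use hypothesis (ii) to pick an element of $G\cap H$ carrying $v$ to $g.v$, observe that the resulting quotient lies in $\stab_G(v)$, and then apply hypothesis (i), finishing by symmetry. Your added remark explaining why the element must be chosen in $G\cap H$ rather than merely in $H$ is a nice clarification but does not change the argument.
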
\begin{proof}
Consider $g\in G$ and $v\in V(\tree)$. By~(\ref{prop:kequal2}), there is $h_v\in G\cap H$ such that $h_v.v = g.v$, so that $h_v^{-1}g\in G$ and belongs to $\stab_G(v)$.  Then~(\ref{prop:kequal1}) implies that $h_v^{-1}g|_{B(v,k)} \in \stab_H(v)|_{B(v,k)}$ as required by Proposition \ref{prop:kequal}. A similar argument applies for $h\in H$ and hence the criterion for $G^{(k)}$ to equal $H^{(k)}$ is established. 
\end{proof}

\begin{remark}\label{rem:finitely_constrained}

(i) The definition of $G^{(k)}$ constrains its elements to act in the same way as those of $G$ up to distance $k$ from each vertex of $\tree$. It is thus a closely related construction to the `{finitely constrained}' groups of automorphisms of rooted trees studied in~\cite{\Sunic}. 
In the case of $k$-closures, basing the constraints on the group $G$ ensures that there do exist automorphisms of the tree that satisfy them. The problem then becomes to determine how much larger~$G^{(k)}$ is than $G$. 

(ii) The $k$-closure is a special case of a much more general construction. Let $G\leq H$ be groups of permutations of some set $X$. Then $G$ and $H$ act on $\mathscr{P}(X)$, the power set of $X$. Let $\mathscr{C}\subseteq \mathscr{P}(X)$ be invariant under the $G$-action and define the \emph{$\mathscr{C}$-closure of $G$ in $H$} to be
$$
G^{\mathscr{C}} = \left\{ x\in H \mid \text{ for each }C\in \mathscr{C},\exists g\in G \text{ such that }x|_C = g|_C\right\}.
$$
Then $G^{\mathscr{C}}$ is a subgroup of $H$ and leaves ${\mathscr{C}}$ invariant. Returning to the case treated here when $H = \Aut(\tree)$, the set $\mathscr{C}$ could be any isomorphism class of subtrees. These subtrees could be finite or infinite such as, for example, the set of infinite paths in $\tree$ or the set of doubly-infinite paths. The proof of Proposition~\ref{prop:basic}(\ref{prop:basic1}) applies to show that each of these ${\mathscr{C}}$-closures yields a closed subgroup of $\Aut(\tree)$. 
\end{remark}

\section{Examples}\label{sec:examples}

In this section we discuss some examples of groups acting on trees, and apply the $k$-closure construction in each case. These examples illustrate some of the results in the previous section, and cover three general constructions that produce a wide variety of groups acting on trees.

We briefly note that Example \ref{eg:egfirst} is one of a family of discrete groups acting vertex-transitively and locally-transitively on a ternary tree, of which there are exactly seven \cite{\ConderLorimer,\DjokovicMiller}. These seven examples show some interesting behaviour in terms of their $k$-closures for small $k$, however the details are quite technical -- see the first author's thesis \cite{\ChrisPhD}.

\subsection{An infinite series of $k$-closures}
In this subsection we show that the group $G=PSL(2,\mathbb{Q}_p)$ acting on its Bruhat-Tits tree has distinct non-discrete $k$-closures for all $k\in\NN$. This is an example of a matrix group over a local field, which is the general case discussed in \cite[\S II.1]{\Serre}.

We begin by giving the structure of the Bruhat-Tits tree. Let $\langle e_1,e_2\rangle$ denote the span over $\ZZ_p$ of two independent vectors $e_1,e_2\in\mathbb{Q}_p^2$ (these are called \textit{lattices of} $\mathbb{Q}_p^2$). 
One such example is $\mathbf{L}_p:=\left\langle\colvec{1\\0},\colvec{0\\p}\right\rangle$.

Let $\mathbf V$ be the set of all such lattices, and define an equivalence relation $\sim$ on $\mathbf V$ such that
\[\mathbf{L}\sim\mathbf{L}'\Leftrightarrow\mathbf{L'}=\lambda\mathbf{L}\mbox{ for some }\lambda\in\mathbb{Q}_p^*.\]
Then $G$ acts on $\mathbf{V}$ by $M:\langle e_1,e_2\rangle\mapsto \langle Me_1,Me_2\rangle$, which preserves the equivalence relation since $\lambda Me_i=M\lambda e_i$.

Define a graph $X$ with vertex set $\mathbf{V}/\sim$ and with an edge $(\mathbf{L},\mathbf{L}')$ if there exist $e_1,e_2$ such that $\mathbf{L}\sim \langle e_1,e_2\rangle$ and $\mathbf{L'}\sim \langle e_1,p^{\pm 1}e_2\rangle$. The action of $G$ preserves edges since 
$\langle Me_1,M(p^{\pm 1}e_2)\rangle=\langle Me_1,p^{\pm 1}(Me_2)\rangle$.

Set $v:=\left[\left\langle\colvec{1\\0},\colvec{0\\1}\right\rangle\right]$, then clearly $v$ is adjacent to $[\mathbf{L}_p]$. For any $f\in\{0,1,\dots,p-1\}$ we have 
$\left[\left\langle\colvec{1\\0},\colvec{f\\1}\right\rangle\right]=\left[\left\langle\colvec{1\\0},\colvec{0\\1}\right\rangle\right]$
and hence $[\mathbf{L}_f]:=\left[\left\langle\colvec{p\\0},\colvec{f\\1}\right\rangle\right]$ is  adjacent to $v$ for all $f\in\{0,1,\dots,p-1\}$. Indeed the vertex $v$ has valency $p+1$, and it follows from \cite{\Serre} that $X=\tree_{p+1}$.

If $M$ is a matrix fixing $v$, then one can write the basis vectors of $\mathbf{L}_0$ as combinations of the column vectors in $M$, which must be in $(\ZZ_p)^2$. Hence $\Fix_G(v)=PSL(2,\ZZ_p)$. 
Similar calculations (involving writing column vectors of $M$ in terms of the basis vectors of a lattice) show that for any $r\geq 0$ 
\[\Fix_G(B(v,r))=\left\{(a_{ij})\in PSL(2,\mathbb{Z}_p):M\equiv\begin{pmatrix}1&0\\0&1\end{pmatrix}(\bmod\;p^r)\right\}.\]

Fix $k\in\mathbb{N}$ and set $M=\begin{pmatrix}1&p^k\\0&1\end{pmatrix}$. Then $M$ fixes $B(v_0,k)$ but not $B(v_0,k+1)$. For any $f\in\{0,\dots,p-1\}$ let $a=f+a_1p+...+a_kp^k$. Then the vertices $\left\{\left[\left\langle \colvec{p^{k+1}\\0},\colvec{a\\1}\right\rangle\right]\right\}$ lie in the ball centred at the vertex $[\mathbf{L}_f]$ (which is adjacent to $v_0$) of radius $k$. But then
\[M\colvec{a\\1}=\colvec{f+a_1p+...+(a_{k}+1)p^{k}\\1}\]
which implies that $M$ has non-trivial action on $B([\mathbf{L}_f],k)$ for any $f\in\{0,\dots,p-1\}$. The same calculation shows any matrix fixing $B(v,k)$ but not $B(v,k+1)$ has non-trivial action on $B([\mathbf{L}_f],k)$ for almost all $f\in\{0,\dots,p-1\}$ (i.e. all but possibly one such $f$). Hence an automorphism $\alpha$ of $\tree_{p+1}$ that fixes $B(v,k)$, agrees with $M$ on $B([\mathbf{L}_f],k)$ (for a fixed $f\neq0$) and fixes $B([\mathbf{L}_{f'}],k)$ for all $f'\neq f$ does not agree with any element of $G$ on $B(v_0,k+1)$, and hence is not in $G^{(k+1)}$.
Thus $G^{(k+1)}\neq G^{(k)}$. 

\label{sec:non-discrete-i}

\subsection{Baumslag-Solitar groups}\label{sec:BS}
Recall that the graph of groups construction produces a group 
that acts on an associated \textit{Bass-Serre tree} \cite{\Serre}. 
An example of a group arising from this construction is the Baumslag-Solitar group 
\[BS(m,n)=\langle a,t \mid ta^mt^{-1}=a^n\rangle.\]
As discussed in \cite{\ElderWillis}, the vertices of the Bass-Serre tree are given by cosets $w\langle a\rangle$, where $w$ is a freely reduced word over the alphabet 
\[\{t,at,\dots, a^{n-1}t, t^{-1}, at^{-1}, \dots, a^{m-1}t^{-1}\},\] 
and directed edges $(u\langle a\rangle,v\langle a\rangle)$ labelled by $t^{\pm1}$ if $v\langle a\rangle=ua^it^{\pm1}\langle a\rangle$ for some $i$.
The resulting tree $\tree_{BS(m,n)}$ is graph isomorphic to $\tree_{m+n}$, and the group acts vertex-transitively on $\tree_{BS(m,n)}$ by acting on the left of cosets. 

\begin{lemma}\label{lem:BS}
Let $w\langle a\rangle$ be a vertex and define $i$ (resp. $j$) to be the number of $t$'s (resp. $t^{-1}$'s) in $w$. Then the subgroup $\langle a^{m^jn^i}\rangle$ is contained in the fixator of $w\langle a\rangle$ under the action of $BS(m,n)$. Consequently, if $A$ is a finite subtree of $\tree_{BS(m,n)}$ then $\Fix_{\langle a\rangle}(A)$ is nontrivial.\end{lemma}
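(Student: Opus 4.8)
The plan is to translate fixing the vertex $w\langle a\rangle$ into an algebraic identity in $BS(m,n)$ and then verify that identity by induction. Since the group acts on the left and the stabiliser of the base vertex $\langle a\rangle$ is $\langle a\rangle$ itself, an element $g$ fixes $w\langle a\rangle$ if and only if $w^{-1}gw\in\langle a\rangle$, equivalently $g\in w\langle a\rangle w^{-1}$. As $w\langle a\rangle w^{-1}$ is a subgroup, it therefore suffices to show that $w^{-1}a^{m^jn^i}w$ is a single power of $a$, and this will yield all of $\langle a^{m^jn^i}\rangle$. First I would record the two commutation rules coming from the defining relation $ta^mt^{-1}=a^n$, namely
\[
a^{nk}t = t a^{mk} \quad\text{and}\quad a^{mk}t^{-1}=t^{-1}a^{nk}\qquad(k\in\ZZ),
\]
which say that a power of $a$ divisible by $n$ (resp.\ by $m$) may be pushed rightward past $t$ (resp.\ $t^{-1}$), its exponent being multiplied by $m/n$ (resp.\ $n/m$).

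The key step is to prove the exact identity
\[
a^{m^jn^i}\,w = w\,a^{m^in^j}
\]
by induction on the length of $w$ as a word in $a^{\pm1},t^{\pm1}$, peeling off its rightmost letter $x$. If $x=a^{\pm1}$ the exponents $i,j$ are unchanged, and the claim is immediate from the inductive hypothesis together with the commutativity of powers of $a$. If $x=t$, the prefix $w'$ has $i-1$ occurrences of $t$; raising the inductive identity $a^{m^jn^{i-1}}=w'a^{m^{i-1}n^j}w'^{-1}$ to the $n$th power gives $a^{m^jn^i}w'=w'a^{m^{i-1}n^{j+1}}$, and a single application of the first commutation rule (with $k=m^{i-1}n^j$) pushes $a^{m^{i-1}n^{j+1}}$ past the final $t$ to produce $a^{m^in^j}$. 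The case $x=t^{-1}$ is symmetric, using the second rule. This establishes $w^{-1}a^{m^jn^i}w=a^{m^in^j}\in\langle a\rangle$ and hence the first assertion. The heart of the argument, and the reason the particular exponent $m^jn^i$ is forced, is precisely that this bookkeeping keeps the running exponent divisible by whichever of $m$ or $n$ is required at each letter, so that every push is legal; the induction packages that divisibility automatically.

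For the final assertion, let $A$ be a finite subtree with vertices $w_1\langle a\rangle,\dots,w_r\langle a\rangle$ and let $i_s,j_s$ be the corresponding counts. By the first part, $a^{m^{j_s}n^{i_s}}$, and hence every power of it, fixes $w_s\langle a\rangle$. Taking $P=\lcm\bigl(m^{j_1}n^{i_1},\dots,m^{j_r}n^{i_r}\bigr)$, the element $a^P$ lies in $\langle a\rangle$ and fixes every vertex of $A$, and since the graph is simple it then fixes $A$ pointwise (edges being determined by their fixed endpoints). As $\langle a\rangle\cong\ZZ$ is infinite and $P\neq0$, the element $a^P$ is nontrivial, so $\Fix_{\langle a\rangle}(A)\neq\triv$. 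I expect the inductive pushing computation of the middle paragraph to be the main obstacle, the reduction and the common-power argument being routine.
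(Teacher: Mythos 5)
Your proof is correct and follows essentially the same route as the paper's: the same commutation rules $a^{cn}t = ta^{cm}$ and $a^{cm}t^{-1} = t^{-1}a^{cn}$, and the same key identity $a^{m^jn^i}w = wa^{m^in^j}$, which the paper asserts with a brief ``contains enough $a^m$'s and $a^n$'s to commute past $w$'' remark and you verify by an explicit induction on the length of $w$. The only inessential difference is the final step, where the paper uses the maximal counts $I,J$ over the vertices of $A$ and the element $a^{m^Jn^I}$, while you take the least common multiple of the individual exponents---both are just common powers of $a$ fixing every vertex of $A$.
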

\begin{proof} The relations defining $BS(m,n)$ imply that $a^{cn}t=ta^{cm}$ and $a^{cm}t^{-1}=t^{-1}a^{cn}$ for $c\in \mathbb Z$. By definition $a^{m^jn^i}$ contains enough $a^m$'s and $a^n$'s to commute past $w$; that is, $a^{m^jn^i}w=wa^{m^in^j}$.

Since $A$ is a finite set of vertices, we can let $I$ (resp. $J$) be the maximum number of $t$'s (resp. $t^{-1}$'s) in any word $w$ where $w\langle a\rangle \in A$. Then $a^{m^Jn^I}$ fixes each  vertex in $A$. 
\end{proof}

Note that some infinite paths in the Bass-Serre tree also have nontrivial fixators. For instance, for each nonnegative integer $i$ the vertex $(atat^{-1})^i\langle a\rangle$ is fixed by $a^n$, so the infinite path spanned by these vertices starting at $\langle a\rangle$ has a nontrivial fixator.

Let $\rho$ denote the homomorphism from $BS(m,n)$ to $\ZZ$ that sends a word $w$ to its $t$-exponent sum. Then $BS(m,n)$ preserves the level sets of $\rho$, which form a partition we denote by $\mathcal{P}$.

\begin{proposition}\label{prop:BS} Let $G=BS(m,n)$. Then
\begin{enumerate}
\item The local action of $G^{(k)}$ at any vertex is isomorphic to $\ZZ/(\lcm(m,n))\ZZ$;
\item $G^{(k)}$ preserves $\mathcal{P}$.
\end{enumerate}
\end{proposition}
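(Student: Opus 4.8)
The plan is to handle the two statements independently. For~(1) I reduce to a computation inside $G=BS(m,n)$, using the observation recorded just after Definition~\ref{defn:k-closure} that $G$ and $G^{(k)}$ induce identical local actions at every vertex. For~(2) I use $\rho$ to equip $\tree_{BS(m,n)}$ with an integer height function and show that lying in $G^{(k)}$ forces an automorphism to shift this height by a global constant.

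For part~(1), since the action is vertex-transitive it suffices to compute the local action of $G$ at the base vertex $\langle a\rangle$, whose $G$-stabiliser is the cyclic subgroup $\langle a\rangle$. The $m+n$ edges at $\langle a\rangle$ are the $n$ edges to the neighbours $a^i t\langle a\rangle$ ($0\le i<n$) and the $m$ edges to the neighbours $a^j t^{-1}\langle a\rangle$ ($0\le j<m$); these lists are irredundant because $a^i t\langle a\rangle=a^{i'}t\langle a\rangle$ forces $t^{-1}a^{\,i-i'}t\in\langle a\rangle$, which by the defining relation (Britton's lemma) holds iff $n\mid(i-i')$, and symmetrically $m\mid(j-j')$ on the $t^{-1}$-side. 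Thus $a$ acts as $i\mapsto i+1 \pmod n$ on the $t$-edges and $j\mapsto j+1 \pmod m$ on the $t^{-1}$-edges, a permutation of order $\lcm(m,n)$ whose kernel on $E(\langle a\rangle)$ is $\langle a^{\lcm(m,n)}\rangle$. Hence the local action is cyclic of order $\lcm(m,n)$, as claimed.

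For part~(2), define the level of a vertex by $\ell(w\langle a\rangle):=\rho(w)$, which is well defined since $\rho(a)=0$, so that $\mathcal{P}$ is precisely the partition of $V(\tree_{BS(m,n)})$ into the level sets $\ell^{-1}(j)$. As $\rho$ is a homomorphism, $\ell(g\cdot u)=\rho(g)+\ell(u)$ for every $g\in G$ and vertex $u$; since adjacent vertices differ in level by exactly one, each $g\in G$ preserves the orientation of every edge-pair from its lower- to its higher-level endpoint. Now let $x\in G^{(k)}$ with $k\ge 1$ and take any edge $(v,u)$ with $\ell(u)=\ell(v)+1$: on $B(v,1)\subseteq B(v,k)$ the map $x$ agrees with some $g_v\in G$, whence $\ell(x.u)=\ell(g_v.u)=\ell(g_v.v)+1=\ell(x.v)+1$, so $x$ raises the level by exactly one across every such edge. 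A telescoping argument along the geodesic from a fixed base vertex $v_0$ to an arbitrary $v$ then gives $\ell(x.v)-\ell(x.v_0)=\ell(v)-\ell(v_0)$, hence $\ell(x.v)=\ell(v)+c$ with $c:=\ell(x.v_0)-\ell(v_0)$ independent of $v$; therefore $x$ carries each level set onto a level set and preserves $\mathcal{P}$.

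The step to watch is this local-to-global passage in~(2): the conclusion genuinely requires $k\ge 1$ (for $k=0$ one would have $G^{(0)}=\Aut(\tree_{m+n})$, which does not preserve $\mathcal{P}$), and it rests on the fact that agreement on balls of radius one already pins down the level change across each edge, which then telescopes precisely because $\tree_{BS(m,n)}$ is a tree.
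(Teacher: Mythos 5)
Your proof is correct and takes essentially the same route as the paper's: part (1) is the paper's computation of the action of the stabiliser $\langle a\rangle$ on the $m+n$ edges at the base vertex, with kernel $\langle a^{\lcm(m,n)}\rangle$, transported to all vertices by vertex-transitivity and the identical-local-actions observation. Part (2) is the paper's argument that radius-$1$ agreement with $G$ forces preservation of the $t^{\pm 1}$ edge labels, merely recast in terms of the height function $\ell=\rho$ (a $t$-edge is exactly an edge raising $\ell$ by one) and then propagated along paths, which is the same telescoping the paper leaves implicit.
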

\begin{proof}
To prove (i) we consider the local action of $G$ at the vertex $v=\langle a\rangle$. This is sufficient since $G^{(k)}$ and $G$ have the same local actions and $G$ acts vertex-transitively. 
The set $B(v,1)$ contains vertices $w\langle a\rangle$ where $w$ has at most one $t,t^{-1}$ letter. Thus $a^j$ fixes these vertices if and only if $j$ is divisible by both $m$ and $n$, so the fixator of $B(v,1)$ in $G$ is  $\langle a^{\lcm(m,n)}\rangle$. 
Then the local action is equal to the quotient group $\Fix_G(v)/\Fix_G(B(v,1))$ which is isomorphic to $\ZZ/(\lcm(m,n))\ZZ$. 

To prove (ii) it is enough to show that $G^{(1)}$ preserves $\mathcal{P}$, since $G^{(k)}\subseteq G^{(1)}$. Since any two are connected by a finite directed path, where each edge is labelled by $t$ or $t^{-1}$, it is sufficient to show that $G^{(1)}$ preserves the labels of the directed edges. This follows immediately from the definition of $G^{(1)}$ since $G$ preserves labelled directed edges.
\end{proof}

In Section \ref{sec:simpleexamples} we will be restricting our attention to the groups $BS(m,n)$ where $m,n$ are relatively prime, in which case $\lcm(m,n)=mn$ and the local action is isomorphic to $\ZZ/m\ZZ\times\ZZ/n\ZZ$. 

Now we discuss the structure of automorphisms $x\in BS(m,n)^{(1)}$ that fix the vertex labelled by $\langle a\rangle$, assuming that $m,n$ are coprime. For each vertex $v\in V(\tree_{BS(m,n)})$ there exists a word $w\in BS(m,n)$ whose action agrees with $x$ on $B(v,1)$. Since $x$ fixes a vertex then $x.v$ and $v$ lie in the same part of $\mathcal{P}$ for all $v\in V(\tree_{BS(m,n)})$ (see Proposition \ref{prop:BS}). Hence $w$ preserves the $t$-exponent sum of $v$, and hence $\rho(w)=0$. Indeed we can assume that $w\in\langle a\rangle$.

To construct any such automorphism, begin by assigning to the vertex $\langle a\rangle$ an element $a^i$ where $0\leq i\leq mn-1$. Then proceeding inductively, assign a number $\sigma_v$ to each vertex $v\langle a\rangle$ adjacent to an already assigned vertex $u\langle a\rangle$ such that
\begin{itemize}
\item $\sigma_v\in\ZZ/n\ZZ$ if the edge $(u\langle a\rangle,v\langle a\rangle)$ is labelled $t$, and
\item $\sigma_v\in\ZZ/m\ZZ$ if the edge $(u\langle a\rangle,v\langle a\rangle)$ is labelled $t^{-1}$.
\end{itemize}
Then inductively define $x\in BS(m,n)^{(1)}$ to agree with $a^i$ on $B(\langle a\rangle,1)$, and to agree with $a^{c_v}$ on $B(v\langle a\rangle,1)$, where $(u\langle a\rangle$,$v\langle a\rangle)$ is an edge, $l$ is the smallest integer such that $a^l$ fixes the word $v$, and $c_v=c_u+l\sigma_v$. These conditions ensure that $a^{c_v}$ and $a^{c_u}$ agree on the edge $(u\langle a\rangle$,$v\langle a\rangle)$ and hence that $x$ is an automorphism of $\tree_{BS(m,n)}$, which is uniquely identified by the collection $\{\sigma_v:v\in V(\tree_{BS(m,n)})\}$.

\subsection{Automorphism groups of graphs}

Let $\Gamma$ be any graph, and let $\tree$ be the \textit{universal covering tree} $\tree$ of $\Gamma$. There exists a surjection $\psi:\tree\rightarrow\Gamma$ such that the restriction of $\psi$ to $B(v,1)$ is a bijection for all $v\in V(\tree)$.  Then the fundamental group $\pi_1(\Gamma)$ of $\Gamma$ acts naturally on $\tree$ (it is precisely the set of automorphisms $g$ for which $\psi\circ g=\psi$) and there exists $G\leq\Aut(\tree)$ for which 
\[
\pi_1(\Gamma) \hookrightarrow G \overset{\phi}{\twoheadrightarrow} \Aut(\Gamma).
\]
is a short exact sequence, where $\phi(g):\psi(v)\mapsto\psi(g.v)$ defines the group homomorphism induced by the covering map $\psi$ \cite{\Djokovic}. 

It is important to note that whilst $\pi_1(\Gamma)$ is a normal subgroup of $G$, it is generally not normal in $G^{(k)}$.

\begin{proposition}\label{prop:graphs} If $\Gamma$ is a finite graph then $G$ is discrete, and $G^{(k)}=G$ for all $k\geq\mbox{diam}(\Gamma)$.
\begin{proof} Suppose $k\geq\mbox{diam}(\Gamma)$. Then for some $v\in V(\tree)$ we have $\psi(B(v,k))=\Gamma$, and hence if $g\in G$ fixes $B(v,k)$ then $\phi(g)$ is the identity automorphism of $\Gamma$, and hence $g$ is the identity automorphism of $\tree$. Hence $\{1_G\}$ is open in the topology which implies that $G$ is discrete, and hence by Corollary \ref{cor:discrete} $G^{(k)}=G$.
\end{proof}\end{proposition}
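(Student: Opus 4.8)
The plan is to extract one rigidity lemma from the covering structure and use it for both assertions. The lemma I would prove first is: for $g\in G$ fixing a vertex $v$ and any $r\geq 0$, one has $g|_{B(v,r)}=\ident$ if and only if the induced automorphism $\phi(g)\in\Aut(\Gamma)$ fixes $B(\psi(v),r)$ pointwise. This is proved by induction on $r$, the inductive step using that $\psi$ restricts to a bijection from the unit ball about any vertex $u$ onto the unit ball about $\psi(u)$: if $g$ fixes $B(v,r)$ and $\phi(g)$ fixes a vertex $\psi(z)$ at distance $r+1$, then $g$ is forced to fix every preimage $z$ on the sphere of radius $r+1$, and conversely. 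Letting $r\to\infty$ gives the clean consequence that $\phi(g)=\ident$ together with $g.v=v$ imply $g=\ident$ (equivalently, $\ker\phi=\pi_1(\Gamma)$ acts freely as the deck group).

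For discreteness of $G$ I would combine this with the geometric observation that $\psi(B(v,k))=\Gamma$ whenever $k\geq\mathrm{diam}(\Gamma)$: since $\psi$ is a surjective local isomorphism, lifting geodesics of $\Gamma$ shows $\psi(B(v,k))$ is the ball of radius $k$ about $\psi(v)$, which is all of $\Gamma$ once $k\geq\mathrm{diam}(\Gamma)$. Hence if $g\in G$ fixes $B(v,k)$ then $\phi(g)$ fixes $\Gamma$ pointwise, so $\phi(g)=\ident$ and, by the lemma, $g=\ident$. Thus $\Fix_G(B(v,k))=\{\ident\}$ is a trivial open subgroup, $\stab_G(v)$ is finite, and $G$ is discrete by Lemma \ref{lem:finite-discrete}.

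For the equality $G^{(k)}=G$, the inclusion $G\leq G^{(k)}$ is Proposition \ref{prop:basic}(\ref{prop:basic3}). For the reverse, given $x\in G^{(k)}$ I would pick $g\in G$ agreeing with $x$ on $B(v_0,k)$ and set $y=g^{-1}x\in G^{(k)}$, so that $y$ fixes $B(v_0,k)$; it then suffices to show $y=\ident$. I would induct on the radius $N\geq k$: assuming $y$ fixes $B(v_0,N)$ and taking $z$ at distance $N+1$, choose a vertex $u$ on the geodesic $[v_0,z]$ whose $k$-ball contains $z$ while $B(u,\mathrm{diam}(\Gamma))$ still lies inside the fixed region $B(v_0,N)$. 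The local witness $g_u\in G$ with $y|_{B(u,k)}=g_u|_{B(u,k)}$ then fixes $B(u,\mathrm{diam}(\Gamma))$, hence is trivial by the first paragraph, so $y(z)=z$. Packaged differently, this is exactly the statement that $G^{(k)}$ is discrete, which together with $G$ stabilising no proper subtree lets one quote Corollary \ref{cor:discrete}.

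The step I expect to be the main obstacle is precisely the boundary case $k=\mathrm{diam}(\Gamma)$. When $k>\mathrm{diam}(\Gamma)$ there is a vertex $u$ to spare and the induction closes immediately; but at $k=\mathrm{diam}(\Gamma)$ the overlap of two adjacent $k$-balls is only the lens $B(v,k)\cap B(w,k)=B(v,k-1)\cup B(w,k-1)$, whose image under $\psi$ is a union of two balls of radius $\mathrm{diam}(\Gamma)-1$ and need not exhaust $\Gamma$. Then $g_u$ is forced only to fix those two smaller balls, so $\phi(g_u)$ could be a nontrivial automorphism of $\Gamma$ fixing two adjacent $(\mathrm{diam}-1)$-balls; closing the argument there requires the extra input that $G$ fixes no proper subtree (so that Corollary \ref{cor:discrete} applies) or a direct verification that no such nontrivial $\phi(g_u)$ survives.
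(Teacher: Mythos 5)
Your rigidity lemma, the discreteness argument, and the induction giving $G^{(k)}=G$ are all correct, and your route differs from the paper's in a substantive way: the paper proves only that $\Fix_G(B(v,k))$ is trivial (hence $G$ discrete) and then settles $G^{(k)}=G$ by citing Corollary \ref{cor:discrete}, whereas your induction re-derives the equality directly from the covering structure. Your route is in fact the sounder one, because the paper's citation is faulty on two counts: Corollary \ref{cor:discrete} requires that $G^{(k)}$ (not $G$) be discrete, and discreteness of $G$ does not imply discreteness of $G^{(k)}$ (Example \ref{eg:egfirst}: $G$ discrete but $G^{(1)}=\Aut(\tree_3)$); it also requires that $G$ stabilise no proper subtree of $\tree$, which is not among the hypotheses and genuinely fails, e.g.\ when $\Gamma$ has pendant vertices, since lifts of pendant vertices lie outside the minimal $G$-invariant subtree. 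Your argument needs neither hypothesis and proves the proposition completely for all $k\geq\mathrm{diam}(\Gamma)+1$.

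The obstacle you flag at $k=\mathrm{diam}(\Gamma)$ is not an artifact of your approach: the statement is actually false there, so neither your suggested fixes nor the paper's proof can close that case. Take $\Gamma$ to be two triangles sharing a vertex, $V(\Gamma)=\{a,b,z,p_1,p_2\}$ with triangles $a,b,z$ and $z,p_1,p_2$, so $\mathrm{diam}(\Gamma)=2$, and let $\alpha\in\Aut(\Gamma)$ be the transposition of $p_1,p_2$; then $\alpha$ is nontrivial and fixes $B(a,1)\cup B(b,1)=\{a,b,z\}$ pointwise, exactly the configuration you warned about. Let $v,w$ be adjacent lifts of $a,b$ and let $g\in G$ be the lift of $\alpha$ with $g.v=v$. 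By your rigidity lemma $g$ fixes $B(v,1)\cup B(w,1)=B(v,2)\cap B(w,2)$ pointwise, yet $g$ swaps the two lifts of $p_1,p_2$ at distance $2$ from $w$. The automorphism $h$ equal to $g$ on $\tree_{(v,w)}$ and to $\ident$ on $\tree_{(w,v)}$ then lies in $G^{(2)}$ (on every ball $B(u,2)$ it agrees with $g$ or with $\ident$, by the same patching computation as in the proof of Theorem \ref{thm:non-discrete}, which does not use the subtree hypothesis), but $h\notin G$: since $\psi(\tree_{(w,v)})$ is all of $\Gamma$, any element of $G$ fixing $\tree_{(w,v)}$ pointwise projects to the identity of $\Aut(\Gamma)$, hence is a deck transformation with a fixed point, hence trivial, while $h\neq\ident$. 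So $G^{(2)}\supsetneq G$ although $k=2=\mathrm{diam}(\Gamma)$. The correct statement is therefore $G^{(k)}=G$ for all $k\geq\mathrm{diam}(\Gamma)+1$, which your induction proves; at $k=\mathrm{diam}(\Gamma)$ one needs precisely the extra hypothesis you identify, namely that no nontrivial automorphism of $\Gamma$ fixes pointwise the union of two balls of radius $\mathrm{diam}(\Gamma)-1$ around adjacent vertices.
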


\begin{figure}[h]
	\centering
	\includegraphics[height=6cm]{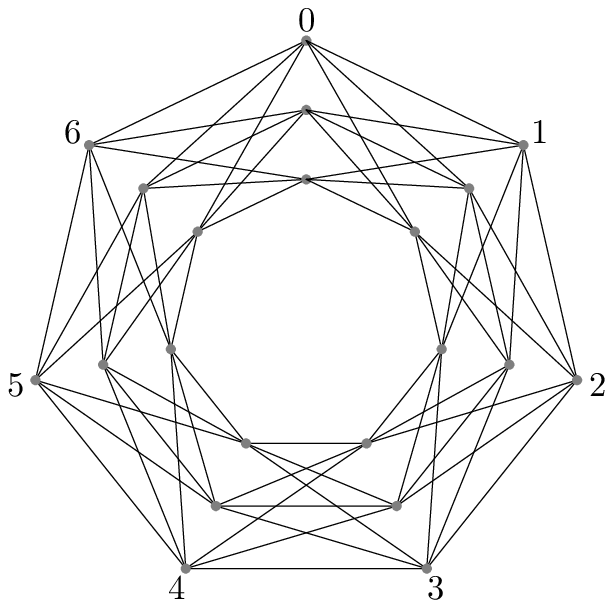}
	\caption{The graph $C(3,7,1)$ referenced in Example \ref{eg:cpr}}
	\label{fig:cpr}
	\end{figure}

\begin{example}\label{eg:cpr}
Let $\Gamma:=C(p,r,1)$ be one of the graphs introduced in \cite{\GardinerPraeger}. Vertices of $\Gamma$ are labelled $\{(i,j):i\in C_r,1\leq j\leq p\}$ with $(i,j)$ adjacent to $(k,l)$ if and only if $k=i\pm 1$ (see Figure \ref{fig:cpr}). Allow also the extension to $r=\infty$, defined to be the infinite graph with vertex set $\{(i,j):i\in\ZZ,1\leq j\leq p\}$ and the same adjacency relation.  Then $\Gamma$ is regular of degree $2p$ and has diameter $\lfloor\frac{r}{2}\rfloor$. The universal cover of $\Gamma$ is $\tree_{2p}$, and there exists a vertex-transitive group $G_{p,r}\leq\Aut(\tree_{2p})$ completing the exact sequence
\[
\pi_1(\Gamma) \hookrightarrow G_{p,r} \overset{\phi}{\twoheadrightarrow} \Aut(\Gamma).
\]
Assume $r\geq 4$, then the local action of $G_{p,r}$ at $v\in V(\tree)$ is isomorphic to $S_p^2\rtimes C_2$, which is independent of $r$.

It turns out that for $r\neq\infty$ the groups $G_{p,r}$ and $G_{p,\infty}$ have the same $k$-closures for all $k<\frac{r}{2}$ (exactly the values for which $\stab_{G_{p,r}}(v)|_{B(v,k)}=\stab_{G_{p,\infty}}(v)|_{B(v,k)}$). We present the argument for $k=1$; the other cases are very similar.

First take $v_0\in V(\tree_{2p})$ to be the base point of the universal cover $\psi:\tree_{2p}\rightarrow C(p,\infty,1)$. Then $G_{p,\infty}$ acts vertex-transitively, and so for any $v\in V(\tree_{2p})$ there exists $h\in G_{p,\infty}$ with $h.v_0=v$, and $\phi(h)\in\Aut(C(p,\infty,1))$. To see that $h\in G_{p,r}$, take the labelling on $\tree_{2p}$ given by $\psi$ and construct a new labelling $(i,j)\mapsto (i\mbox{ mod } r,j)$. We have constructed a universal cover $\psi'$ of $C(p,r,1)$ with base point $v_0$, and a corresponding $\phi':G_{p,r}\twoheadrightarrow\Aut(C(p,r,1)$ such that $h\in G_{p,r}$. Hence $G_{p,r}\cap G_{p,\infty}$ acts vertex-transitively on $\tree_{2p}$. We have already established that both groups have the same local action, and hence by Corollary \ref{cor:kequal} they have the same 1-closure. 

\end{example}

\section{Independence Properties $\Pk{k}$}
\label{sec:PropertyPk}

In this section we define a series of properties, denoted by $\Pk{k}$ for $k\in\NN$, that will be satisfied by the $k$-closure of any group of tree automorphisms. They provide a condition under which the descending series of $k$-closures terminates at $\overline{G}=G^{(k)}$ for some $k\in\NN$. 

\begin{definition}\label{def:Pk}[Property $\Pk{k}$]
Suppose $G\leq\Aut(\tree)$ and fix $k\in\NN$ and an edge $e=(v,w)$. 
Define \[F_{k,e}:=\Fix_G(B(v,k)\cap B(w,k)).\]
Then $G$ satisfies {\em Property $\Pk{k}$} if for any choice of edge $e=(v,w)$,
\[F_{k,e}=\Fix_{F_{k,e}}(\tree_{(v,w)})\Fix_{F_{k,e}}(\tree_{(w,v)}).\]
\end{definition}

\begin{figure}[h]
	\centering
	\includegraphics[width=6cm]{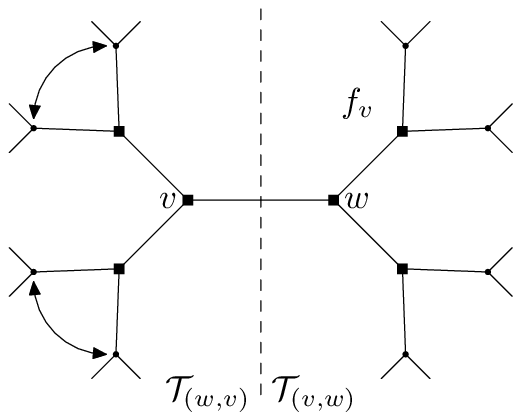}\hspace{0.1cm}
	\includegraphics[width=6cm]{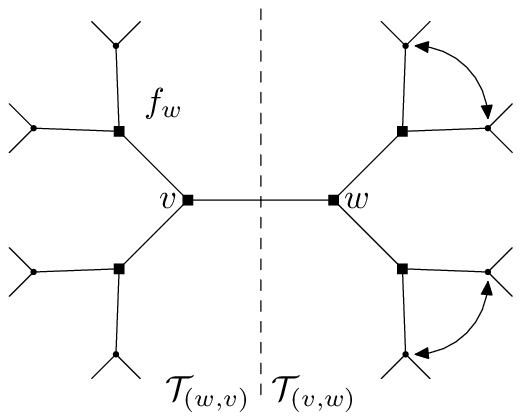}
	\caption{Let $f\in\Fix_G(B(v,2)\cap B(w,2))$ be the automorphism of $\tree_3$ indicated in Figure \ref{fig:non-discrete}, and define $f_v, f_w$ as above. Then $f=f_vf_w$, and if $G$ satisfies Property $\Pk{2}$ then $f_v,f_w\in G$.}
\end{figure}

Note that when $k=1$ then $B(v,1)\cap B(w,1)$ is just the edge $e=(v,w)$ and $F_{1,e}=\Fix_G(e)$. Hence Property $\Pk{1}$ is equivalent to the Independence Property of Amann as discussed in Remark \ref{remark:indep_prop}). Also, Property $\Pk{k}$ is stronger than Property H \cite{\MollerVonk}, which just requires $\Fix_G(\tree_{(v,w)})$ to be non-trivial for every edge in $\tree$.

\begin{proposition}\label{prop:GkhasPk}
Let $G\leq\Aut(\tree)$ and $k\in\NN$. Then $G^{(k)}$ has Property $\Pk{k}$.
\begin{proof} Let $e=(v,w)$ be an edge. Clearly $F_{k,e}\supseteq\Fix_{F_{k,e}}(\tree_{(v,w)})\Fix_{F_{k,e}}(\tree_{(w,v)})$.
 Conversely, suppose $f\in F_{k,e}$. Construct an automorphism $f_1$ by setting $f_1=f$ on all $B(u,k)$ where $u\in \tree_{(w,v)}$, and $f_1$ trivial on all $B(t,k)$ where $t\in\tree_{(v,w)}$. Similarly construct $f_2$ by setting $f_1=f$ on all $B(t,k)$ where $t\in \tree_{(v,w)}$, and $f_2$ trivial on all $B(u,k)$ where $u\in\tree_{(w,v)}$. It is clear that $f=f_1f_2$, and since $f_1$ and $f_2$ agree with either $f$ or the identity on every ball, each is in $G^{(k)}$. Hence $f=f_1f_2\in\Fix_{F_{k,e}}(\tree_{(v,w)})\Fix_{F_{k,e}}(\tree_{(w,v)})$ as required.
\end{proof}\end{proposition}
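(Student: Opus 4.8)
The plan is to fix an arbitrary edge $e=(v,w)$ and verify the defining identity of Property $\Pk{k}$ for $G^{(k)}$, namely
$F_{k,e}=\Fix_{F_{k,e}}(\tree_{(v,w)})\Fix_{F_{k,e}}(\tree_{(w,v)})$, where now $F_{k,e}=\Fix_{G^{(k)}}(B(v,k)\cap B(w,k))$. The inclusion $\supseteq$ is immediate, since both factors consist of elements of $F_{k,e}$. For $\subseteq$, I would take an arbitrary $f\in F_{k,e}$ and split it across $e$ into two automorphisms supported on the two semi-trees.

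Concretely, since $k\geq 1$ both $v$ and $w$ lie in $B(v,k)\cap B(w,k)$, so $f$ fixes $v$ and $w$ and therefore stabilises each of the semi-trees $\tree_{(v,w)}$ and $\tree_{(w,v)}$ into which $e$ separates $\tree$. I would define $f_1$ to equal $f$ on $\tree_{(w,v)}$ and $\ident$ on $\tree_{(v,w)}$, and $f_2$ to equal $f$ on $\tree_{(v,w)}$ and $\ident$ on $\tree_{(w,v)}$. Because $e$ is the unique edge joining the two semi-trees and $f$ fixes its endpoints, each of $f_1,f_2$ is a well-defined automorphism; they have complementary supports, hence commute, and a direct check on each semi-tree gives $f=f_1f_2$. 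Granting $f_1,f_2\in G^{(k)}$, it is then routine that $f_1\in\Fix_{F_{k,e}}(\tree_{(v,w)})$ and $f_2\in\Fix_{F_{k,e}}(\tree_{(w,v)})$: each fixes a semi-tree by construction, and each fixes the band $B(v,k)\cap B(w,k)$, where it agrees with either $f$ or $\ident$ (both of which fix the band).

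The crux, and the step I expect to be the main obstacle, is showing $f_1\in G^{(k)}$ (the argument for $f_2$ being symmetric): one must produce, for every vertex $z$, an element of $G$ agreeing with $f_1$ on $B(z,k)$. When $B(z,k)$ lies entirely inside one semi-tree this is clear, using an approximant to $f$ from $G^{(k)}$ on the $\tree_{(w,v)}$ side and $g=\ident$ on the $\tree_{(v,w)}$ side. The delicate case is a ball $B(z,k)$ that straddles the edge. Here I would exploit that any straddling ball has its ``far'' portion entirely inside the band: if $z\in\tree_{(w,v)}$ then $B(z,k)$ reaches into $\tree_{(v,w)}$ only when $d(z,v)\leq k-1$, and then every vertex $t$ of $B(z,k)\cap\tree_{(v,w)}$ satisfies $d(w,t)\leq k-1$, so it lies in $B(v,k)\cap B(w,k)$ and is fixed by $f$. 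Hence on such a ball $f_1|_{B(z,k)}=f|_{B(z,k)}$ (the identity piece coinciding with $f$ precisely where $f$ is already trivial), while a straddling ball centred in $\tree_{(v,w)}$ symmetrically forces $f_1|_{B(z,k)}=\ident$. In every case $f_1$ agrees on $B(z,k)$ with either $f$ or $\ident$, each matched by some $g\in G$, so $f_1\in G^{(k)}$. Carefully verifying this band estimate for straddling balls is the heart of the argument.
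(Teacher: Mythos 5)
Your proposal is correct and follows essentially the same route as the paper: the same decomposition $f=f_1f_2$ with $f_1$ equal to $f$ on $\tree_{(w,v)}$ and trivial on $\tree_{(v,w)}$ (and symmetrically for $f_2$), with membership in $G^{(k)}$ checked ball by ball. The only difference is that you spell out the ``band estimate'' for straddling balls --- that $B(z,k)\cap\tree_{(v,w)}\subseteq B(v,k)\cap B(w,k)$ when $z\in\tree_{(w,v)}$ --- which the paper leaves implicit in the assertion that $f_1$ and $f_2$ agree with $f$ or the identity on every ball; this is a worthwhile clarification, not a different argument.
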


\begin{proposition}\label{prop:Pk+1} Let $G\leq\Aut(\tree)$ and $k\in\NN$. If $G$ satisfies Property $\Pk{k}$ then it also satisfies Property $\Pk{k+1}$.
\begin{proof}
Let $e=(v,w)$ be any edge, and suppose $x\in F_{k+1,e}$. Since $F_{k+1,e}\leq F_{k,e}$ and $G$ satisfies Property $\Pk{k}$ then $x=x_1x_2$, where $x_1\in\Fix_{F_{k,e}}(\tree_{(v,w)})$ and $x_2\in\Fix_{F_{k,e}}(\tree_{(w,v)})$. Then $x_1x_2$ fixes $\mathcal{B}:=B(v,k+1)\cap B(w,k+1)$, but since $x_2$ fixes $\mathcal{B}\cap\tree_{(w,v)}$ then so must $x_1$. But $x_1$ also fixes $\mathcal{B}\cap\tree_{(v,w)}$ and hence $x_1\in F_{k+1,e}$. Similarly $x_2\in F_{k+1,e}$, and therefore
$F_{k+1,e}=\Fix_{F_{k+1,e}}(\tree_{(v,w)})\Fix_{F_{k+1,e}}(\tree_{(w,v)})$; that is, $G$ satisfies Property $\Pk{k+1}$.
\end{proof}\end{proposition}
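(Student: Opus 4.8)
The plan is to bootstrap the Property $\Pk{k}$ factorisation up one level, using that fixing a larger set is a stronger condition. First I would record the elementary containment $F_{k+1,e}\leq F_{k,e}$: since $v$ and $w$ are adjacent, $B(v,k)\cap B(w,k)\subseteq B(v,k+1)\cap B(w,k+1)$, so any automorphism fixing the latter also fixes the former. Consequently every $x\in F_{k+1,e}$ lies in $F_{k,e}$, and Property $\Pk{k}$ lets me write $x=x_1x_2$ with $x_1\in\Fix_{F_{k,e}}(\tree_{(v,w)})$ and $x_2\in\Fix_{F_{k,e}}(\tree_{(w,v)})$. The inclusion $\Fix_{F_{k+1,e}}(\tree_{(v,w)})\Fix_{F_{k+1,e}}(\tree_{(w,v)})\subseteq F_{k+1,e}$ is automatic, as both factors are subgroups of the group $F_{k+1,e}$; so the entire task is to promote this single factorisation from living in $F_{k,e}$ to living in $F_{k+1,e}$, that is, to show that each of $x_1,x_2$ individually fixes $\mathcal{B}:=B(v,k+1)\cap B(w,k+1)$.

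To do this I would first understand how $\mathcal{B}$ is distributed across the two semi-trees. Because $v,w$ are adjacent, a vertex $u$ on the $w$-side satisfies $d(u,v)=d(u,w)+1$, so $\mathcal{B}\cap\tree_{(v,w)}$ consists precisely of the $w$-side vertices within distance $k$ of $w$; symmetrically on the $v$-side. In particular $\mathcal{B}$ is the disjoint union $(\mathcal{B}\cap\tree_{(v,w)})\sqcup(\mathcal{B}\cap\tree_{(w,v)})$, so it suffices to check that $x_1$ fixes each piece separately.

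The transfer step is then short. The factor $x_1$ fixes $\tree_{(v,w)}$ pointwise and hence fixes $\mathcal{B}\cap\tree_{(v,w)}$ outright. For the remaining piece I would use that $x_2$ acts trivially on all of $\tree_{(w,v)}$: for $u\in\mathcal{B}\cap\tree_{(w,v)}$ we get $x_2.u=u$, whence $x.u=x_1.(x_2.u)=x_1.u$; but $x\in F_{k+1,e}$ fixes $u$, forcing $x_1.u=u$. Thus $x_1$ fixes all of $\mathcal{B}$, giving $x_1\in\Fix_{F_{k+1,e}}(\tree_{(v,w)})$. Running the identical argument with the two semi-trees interchanged yields $x_2\in\Fix_{F_{k+1,e}}(\tree_{(w,v)})$ (here one uses additionally that $x_2\in F_{k,e}$ fixes the endpoints $v,w$ and so preserves each semi-tree). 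Hence $x=x_1x_2$ lies in the required product, and Property $\Pk{k+1}$ follows.

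I expect the only genuinely non-routine point to be the combinatorial description of $\mathcal{B}$ in the middle paragraph: once one sees that $\mathcal{B}$ splits cleanly along the edge into its $w$-side and $v$-side parts, the ``one factor is trivial on one side'' transfer is forced, and the rest is formal.
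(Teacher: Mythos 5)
Your proposal is correct and follows essentially the same route as the paper: apply Property $\Pk{k}$ via the containment $F_{k+1,e}\leq F_{k,e}$ to write $x=x_1x_2$, then use the fact that $x$ fixes $\mathcal{B}=B(v,k+1)\cap B(w,k+1)$ and each factor fixes one semi-tree to conclude that each factor fixes all of $\mathcal{B}$, hence lies in $F_{k+1,e}$. Your additional details (the disjoint splitting of $\mathcal{B}$ across the two semi-trees, and the setwise preservation needed for the $x_2$ case) are correct elaborations of steps the paper leaves implicit.
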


As mentioned earlier, taking the $l$-closure of $G^{(k)}$ when $l\leq k$ obtains the $l$-closure of $G$. On the other hand, if $l>k$ the above result implies that $G^{(k)}$ satisfies Property $\Pk{l}$. The following theorem implies that the $l$-closure of $G^{(k)}$ is equal to $G^{(k)}$.

\begin{theorem}\label{thm:PkandGk} 
Let $G\leq\Aut(\tree)$. If $G$ satisfies Property $\Pk{k}$ then $G^{(k)}=\overline{G}$, and if $G^{(k)}=\overline{G}$ then $\overline{G}$ satisfies Property $\Pk{k}$.
\begin{proof}
Suppose $G$ has property $\Pk{k}$; it will be proven by induction that $G^{(r)}=G^{(k)}$ for all $r\geq k$. For the inductive hypothesis, assume for some such $r$ that $G^{(r)}=G^{(k)}$, and take $g\in G^{(k)}$ and $v$ a vertex. It will be shown that there exists $y\in G$ such that $y\mid_{B(v,r+1)}=g\mid_{B(v,r+1)}$ and hence $g$ is in the $(r+1)$-closure of $G$.

By the inductive hypothesis $g\in G^{(r)}$, and hence there exists $z\in G$ so that $z\mid_{B(v,r)}=g\mid_{B(v,r)}$, which also implies $z^{-1}g\in G^{(r)}$ fixes $B(v,r)$.
Let $v_1,\ldots, v_m$ be the vertices that are distance exactly $r-k+1$ from $v$. Then for all $i=1,\ldots,m$ there exists $a_i\in G$ such that $a_i\mid_{B(v_i,k)}=(z^{-1}g)\mid_{B(v_i,k)}$. Each $a_i$ fixes the intersection $B(v_i,k)\cap B(v,r)$, which is equal to $B(v_i,k)\cap B(w_i,k)$ where $w_i$ is the vertex adjacent to $v_i$ that is closest to $v$. Since $G$ has property $\Pk{k}$ then $a_i=b_ic_i$ where $b_i$ fixes $\tree_{(v_i,w_i)}$ and $c_i$ fixes $\tree_{(w_i,v_i)}$.

Now $b_1b_2\cdots b_m$ fixes the ball of radius $r$ around $v$, and in each ball $B(v_i,r)$ it acts like $z^{-1}g$ (since each $b_i$ only acts non-trivially on just one $B(v_i,r)$). Therefore $(zb_1b_2\cdots b_m)^{-1}g\in G$ fixes $B(v,r+1)$. Taking $y=zb_1b_2\cdots b_m$ proves that $g\in G^{(r+1)}$, so by induction $G^{(r)}=G^{(k)}$ for all $r\geq k$. By Proposition \ref{prop:basic} we have $G^{(k)}=\bigcap G^{(r)}=\overline{G}$.

The second assertion follows directly from Proposition \ref{prop:GkhasPk}.
\end{proof}
\end{theorem}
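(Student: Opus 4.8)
The plan is to treat the two implications separately, the second being essentially free. If $G^{(k)}=\overline{G}$, then Proposition~\ref{prop:GkhasPk} already guarantees that $G^{(k)}$ satisfies Property $\Pk{k}$, and so $\overline{G}$ does too; there is nothing further to do. All the work lies in the first implication.

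For the first implication I would argue that Property $\Pk{k}$ forces the descending chain $G^{(k)}\geq G^{(k+1)}\geq\cdots$ of closures to be constant, i.e.\ $G^{(r)}=G^{(k)}$ for all $r\geq k$. Granting this, Proposition~\ref{prop:basic}(\ref{prop:basic3}) yields $\overline{G}=\bigcap_{r\geq k}G^{(r)}=G^{(k)}$, which is the claim. I would prove the stabilisation by induction on $r$, with $r=k$ trivial. Since $G^{(r+1)}\leq G^{(r)}$ by Proposition~\ref{prop:basic}(\ref{prop:basic2}), the inductive step reduces to the inclusion $G^{(k)}\leq G^{(r+1)}$: given $g\in G^{(k)}=G^{(r)}$ and a vertex $v$, I must exhibit a single $y\in G$ with $y|_{B(v,r+1)}=g|_{B(v,r+1)}$.

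The idea is to start from an element that already matches $g$ on $B(v,r)$ and then repair it, one boundary edge at a time, using the independence supplied by $\Pk{k}$. Concretely, choose $z\in G$ with $z|_{B(v,r)}=g|_{B(v,r)}$, so that $z^{-1}g\in G^{(r)}$ fixes $B(v,r)$. List the vertices $v_1,\dots,v_m$ at distance exactly $r-k+1$ from $v$, and for each pick $a_i\in G$ agreeing with $z^{-1}g$ on $B(v_i,k)$; writing $w_i$ for the neighbour of $v_i$ nearer $v$, the tree identity $B(v_i,k)\cap B(v,r)=B(v_i,k)\cap B(w_i,k)$ shows that $a_i\in F_{k,(v_i,w_i)}$. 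Applying Property $\Pk{k}$ to the edge $(v_i,w_i)$ splits $a_i=b_ic_i$ with $b_i$ fixing the near semi-tree $\tree_{(v_i,w_i)}$ and hence supported only on the far semi-tree $\tree_{(w_i,v_i)}$, where it still agrees with $z^{-1}g$. Because the far semi-trees are pairwise disjoint, the product $b:=b_1\cdots b_m\in G$ fixes $B(v,r)$ and coincides with $z^{-1}g$ on the entire shell at distance $r+1$; thus $y:=zb\in G$ satisfies $y|_{B(v,r+1)}=g|_{B(v,r+1)}$, completing the induction.

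The step I expect to be most delicate is verifying that a single decomposition at each boundary edge truly advances agreement from radius $r$ to radius $r+1$ uniformly. This hinges on two geometric facts I would check before invoking $\Pk{k}$: that $B(v_i,k)\cap B(v,r)$ equals the bridge $B(v_i,k)\cap B(w_i,k)$ (so that $a_i$ lands in $F_{k,(v_i,w_i)}$), and that the outward semi-trees $\tree_{(w_i,v_i)}$ are disjoint across distinct $i$ (so that the $b_i$ have non-interfering supports and $b$ both fixes $B(v,r)$ and reproduces $z^{-1}g$ on the new shell at once).
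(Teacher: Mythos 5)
Your proof is correct and takes essentially the same route as the paper's: the identical induction that $G^{(r)}=G^{(k)}$ for all $r\geq k$ (choosing $z\in G$ matching $g$ on $B(v,r)$, placing each $a_i$ in $\Fix_G(B(v_i,k)\cap B(w_i,k))$ at the vertices $v_i$ of distance $r-k+1$, splitting $a_i=b_ic_i$ via Property $\Pk{k}$, and multiplying the outward factors $b_i$ to repair the shell at radius $r+1$), followed by Proposition~\ref{prop:basic}(\ref{prop:basic3}). The converse implication is likewise dispatched exactly as in the paper, via Proposition~\ref{prop:GkhasPk}.
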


The main result of \cite{\AbertGlasner} implies that there exist free groups acting on regular trees which are dense in either $\Aut(\tree)$ or the simple subgroup $\Aut(\tree)^+$ (see Section \ref{sec:simplicity}). Such a group cannot have Property $\Pk{1}$ but its closure does.

\begin{corollary}\label{cor:PkandGk}
Let $G\leq\Aut(\tree)$ and $k\in\NN$. Then $\overline{G}$ does not satisfy Property $\Pk{k}$ for any $k$ if and only if $G$ has infinitely many distinct $k$-closures.
\begin{proof} 
If $G$ has finitely many $k$-closures there is a smallest one $G^{(j)}$. But then $G^{(j)}=\bigcap_{k\in \NN} G^{(k)}$ which by Proposition \ref{prop:basic}(\ref{prop:basic3}) is equal to $\overline{G}$. Hence by Theorem \ref{thm:PkandGk} $\overline{G}$ has Property $\Pk{j}$. On the other hand if $G$ has infinitely many $k$-closures then the sequence of $G^{(k)}$ is never constant. Hence for all $k$ we have $G^{(k)}\neq\overline{G}$, and so by Theorem \ref{thm:PkandGk} $\overline{G}$ does not satisfy Property $\Pk{k}$.
\end{proof}\end{corollary}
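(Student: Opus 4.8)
The statement to prove is Corollary~\ref{cor:PkandGk}: that $\overline{G}$ fails Property $\Pk{k}$ for every $k$ if and only if $G$ has infinitely many distinct $k$-closures. This is a clean logical equivalence built on top of Theorem~\ref{thm:PkandGk}, so the plan is to prove the contrapositive in one direction and argue directly in the other, in each case translating between the language of $k$-closures and the language of Property $\Pk{k}$ using the two results already established.

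First I would prove the direction ``$G$ has finitely many distinct $k$-closures $\Rightarrow$ $\overline{G}$ satisfies $\Pk{j}$ for some $j$.'' By Proposition~\ref{prop:basic}(\ref{prop:basic2}) the sequence $G^{(k)}$ is descending, so if only finitely many are distinct there is a smallest member, say $G^{(j)}$; since the sequence is monotone and eventually constant, $G^{(j)}$ coincides with the tail and hence with $\bigcap_{k\in\NN}G^{(k)}$, which by Proposition~\ref{prop:basic}(\ref{prop:basic3}) equals $\overline{G}$. Thus $G^{(j)}=\overline{G}$, and the second assertion of Theorem~\ref{thm:PkandGk} then gives that $\overline{G}$ satisfies Property $\Pk{j}$. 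This is the contrapositive of the ``only if'' direction.

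For the converse I would argue directly: assume $G$ has infinitely many distinct $k$-closures. Because the $G^{(k)}$ form a properly infinite (not eventually constant) descending chain, no single $G^{(k)}$ can equal the intersection $\bigcap_{r}G^{(r)}=\overline{G}$; concretely, for each fixed $k$ there is some larger $r$ with $G^{(r)}\subsetneq G^{(k)}$, so $G^{(k)}\neq\overline{G}$ for every $k$. The first implication of Theorem~\ref{thm:PkandGk} is that $G$ satisfying $\Pk{k}$ forces $G^{(k)}=\overline{G}$; taking the contrapositive, $G^{(k)}\neq\overline{G}$ implies $G$ does not satisfy $\Pk{k}$. Since this holds for every $k$, $G$ fails $\Pk{k}$ for all $k$, and hence so does $\overline{G}$ (Property $\Pk{k}$ is determined by fixators of finite subtrees and passes to the closure; indeed $G\subseteq\overline{G}=\bigcap G^{(k)}$ and failure of the decomposition for $\overline{G}$ is exactly what the chain being non-constant records).

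The only real subtlety, and the step I would be most careful with, is the interface between $G$ and $\overline{G}$ in applying Theorem~\ref{thm:PkandGk}: the theorem's hypotheses are phrased for a group satisfying $\Pk{k}$, and one must check that the statement ``$\overline{G}$ does not satisfy $\Pk{k}$'' is the correct negation to pair with ``$G$ has infinitely many $k$-closures,'' given that $k$-closures of $G$ and of $\overline{G}$ agree (since $\overline{G}\subseteq G^{(k)}$ forces $(\overline{G})^{(k)}=G^{(k)}$). Once that identification is recorded, both directions reduce to the logical contrapositives of the two halves of Theorem~\ref{thm:PkandGk}, and no further computation is needed. I expect the proof to be short, with the bookkeeping about ``smallest $k$-closure equals the intersection'' being the one place to state carefully rather than any genuine obstacle.
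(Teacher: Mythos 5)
Your overall route is the same as the paper's: both directions are read off from the two halves of Theorem~\ref{thm:PkandGk}, together with the observation that a finite descending chain of $k$-closures stabilises at $\bigcap_{k}G^{(k)}=\overline{G}$ (Proposition~\ref{prop:basic}(\ref{prop:basic3})). Your first direction is identical to the paper's, and your identification $(\overline{G})^{(k)}=G^{(k)}$ (which follows from $G\le\overline{G}\le G^{(k)}$ together with Proposition~\ref{prop:basic}(\ref{prop:basic_idempotent})) is exactly the bookkeeping that both your argument and the paper's terse ``by Theorem~\ref{thm:PkandGk}'' require in the second direction.

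However, one step in your second direction, as written, is wrong and should be deleted. From ``$G^{(k)}\ne\overline{G}$ for every $k$'' you first deduce, via the contrapositive of the first half of Theorem~\ref{thm:PkandGk} applied to $G$, that $G$ fails $\Pk{k}$ for all $k$ (correct), and then assert ``hence so does $\overline{G}$'' on the grounds that Property $\Pk{k}$ ``passes to the closure.'' That transfer is false in general: the paper notes immediately after Theorem~\ref{thm:PkandGk} that a free group which is dense in $\Aut(\tree)$ (or in $\Aut(\tree)^+$) fails Property $\Pk{1}$ while its closure satisfies it, so failure of $\Pk{k}$ for $G$ says nothing about $\overline{G}$. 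The repair is the one you yourself record in your final paragraph, and it is what the paper implicitly does: apply the first half of Theorem~\ref{thm:PkandGk} to $\overline{G}$ rather than to $G$. Since $(\overline{G})^{(k)}=G^{(k)}$ and $\overline{\overline{G}}=\overline{G}$, that half reads ``$\overline{G}$ satisfies $\Pk{k}$ $\Rightarrow$ $G^{(k)}=\overline{G}$,'' and its contrapositive gives precisely the needed conclusion for each $k$. With the detour through ``$G$ fails $\Pk{k}$'' excised and replaced by this, your proof is correct and coincides with the paper's.
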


\subsection{Examples}

Recall that in Section \ref{sec:non-discrete-i} we showed that the $k$-closures of the group $G=PSL(2,\mathbb Q_p)$ are all distinct. It follows that the closure of $PSL(2,\mathbb Q_p)$ is not equal to any of its $k$-closures, and hence does not satisfy Property $\Pk{k}$ for any $k$. 

\begin{proposition}\label{prop:BSnoPk} When $m,n$ are coprime, $G=BS(m,n)$ acting on its Bass-Serre tree (as discussed in Section \ref{sec:BS}) does not satisfy Property $\Pk{k}$ for any $k\in\NN$.

\begin{proof} Consider the edge $(v,w):=(\langle a\rangle,t\langle a\rangle)$. We show that for this edge the fixators of the two semitrees $\tree_{(v,w)}$ and $\tree_{(w,v)}$ under $G$ are both trivial. Since by Lemma \ref{lem:BS} $\Fix_G(B(v,k)\cap B(w,k))$ is non-trivial, it then follows that $G$ does not satisfy Property $\Pk{k}$.

Suppose $a^{cn^j}$ ($c>0$ and not divisible by $n$) is a non-trivial automorphism in the fixator of the semitree $T_{(v,w)}$. Such an automorphism must have this form since it fixes the vertex $\langle a \rangle$. The semitree $T_{(v,w)}$ contains vertices $(ta)^it\langle a\rangle$ for all $i\in\NN$. Assume that $i\geq j$, then $a^{cn^j}(ta)^it=(ta)^ja^{cm^j}(ta)^{i-j}t$. To continue passing $a^{cm^j}$ through this word we require $cm^j$ to be a multiple of $n$. Since $c$ is not divisible by $n$ then $m^j$ is a multiple of $n$, which contradicts our assumption that $m,n$ are coprime.

A similar argument holds for the semitree $T_{(tv,v)}$, which contains vertices labelled by $(t^{-1}a)^it^{-1}\langle a\rangle$ for all $i\in\NN$.
\end{proof}
\end{proposition}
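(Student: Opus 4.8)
The plan is to exploit the definition of Property $\Pk{k}$ together with Lemma \ref{lem:BS}. Fix the edge $e=(v,w):=(\langle a\rangle, t\langle a\rangle)$. By definition $G$ can have Property $\Pk{k}$ at $e$ only if $F_{k,e}=\Fix_{F_{k,e}}(\tree_{(v,w)})\Fix_{F_{k,e}}(\tree_{(w,v)})$, where $F_{k,e}=\Fix_G(B(v,k)\cap B(w,k))$. Since the two factors on the right are contained in $\Fix_G(\tree_{(v,w)})$ and $\Fix_G(\tree_{(w,v)})$ respectively, it suffices to prove (a) that both of these semitree fixators are trivial, and (b) that $F_{k,e}$ is nontrivial; then the right-hand side collapses to $\triv$ while the left-hand side does not, so $\Pk{k}$ fails for every $k$. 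Part (b) is immediate: $B(v,k)\cap B(w,k)$ is a finite subtree, so Lemma \ref{lem:BS} supplies a nontrivial power of $a$ fixing it, whence $F_{k,e}\neq\triv$. The whole argument thus reduces to computing the two semitree fixators, which are independent of $k$.

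For the semitree fixators I would first pin down the shape of their elements. The stabiliser of the root $\langle a\rangle$ in $G$ is exactly $\langle a\rangle$, since $g\langle a\rangle=\langle a\rangle$ forces $g\in\langle a\rangle$. An element fixing $\tree_{(w,v)}$ fixes $v=\langle a\rangle$ outright; an element fixing $\tree_{(v,w)}$ fixes $w$ and every neighbour of $w$ except possibly $v$, and, permuting the finite neighbour set of $w$ while fixing all but one of its points, it must fix $v$ as well. Either way such an element lies in $\langle a\rangle$, and fixing both endpoints of $e$ forces it to have the form $a^{cn^j}$ with $c>0$, $n\nmid c$ and $j\ge 1$ (after inverting if necessary and factoring the largest power of $n$ out of the exponent).

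The core of the argument, and the step I expect to be the main obstacle, is then a normal-form computation showing that no such $a^{cn^j}$ can fix the whole semitree $\tree_{(v,w)}$, which contains the vertices $(ta)^it\langle a\rangle$. Using the defining relation in the form $a^nt=ta^m$ one checks the single-step identity $a^{en}(ta)=(ta)a^{em}$, and iterating it $j$ times yields
\[
a^{cn^j}(ta)^i t=(ta)^j\, a^{cm^j}\,(ta)^{i-j}t\qquad(i\ge j).
\]
Taking $i=j$ gives $a^{cn^j}(ta)^j t=(ta)^j a^{cm^j}t$. The crucial point is that the residual exponent is now $cm^j$, and because $\gcd(m,n)=1$ and $n\nmid c$ we have $n\nmid cm^j$; reducing $a^{cm^j}t$ to normal form therefore changes its initial syllable from $t$ to $a^{s}t$ with $s\equiv cm^j\not\equiv 0\pmod n$, so $a^{cm^j}t\langle a\rangle\neq t\langle a\rangle$, and hence $a^{cn^j}$ moves the vertex $(ta)^j t\langle a\rangle$ of $\tree_{(v,w)}$ --- a contradiction. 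Thus $\Fix_G(\tree_{(v,w)})=\triv$. The semitree $\tree_{(w,v)}$ is handled identically after swapping the roles of $m$ and $n$: its nontrivial candidate elements have the form $a^{cm^j}$, its deep vertices are $(t^{-1}a)^i t^{-1}\langle a\rangle$, one uses $a^mt^{-1}=t^{-1}a^n$, and coprimality again blocks the passage. The delicacy throughout is precisely this coset bookkeeping --- verifying that the push-through terminates after exactly $j$ steps and that the leftover power of $a$ genuinely alters the vertex rather than being absorbed into $\langle a\rangle$ --- and coprimality of $m$ and $n$ is exactly what guarantees it.
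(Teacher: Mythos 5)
Your proposal is correct and follows essentially the same route as the paper's proof: the same edge $(\langle a\rangle, t\langle a\rangle)$, the same reduction to showing both semitree fixators are trivial while Lemma \ref{lem:BS} makes $F_{k,e}$ nontrivial, and the same push-through computation with the relation $a^nt=ta^m$ using coprimality to block the passage. Your write-up merely fills in two details the paper leaves implicit --- the neighbour-permutation argument showing a semitree fixator must fix both endpoints of the edge, and the concrete finish at $i=j$ exhibiting a moved vertex --- so it is a slightly more careful rendering of the identical argument.
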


\begin{remark}
Recall that for any discrete group $G\leq\Aut(\tree)$ there exists an edge $(v,w)$ and a value of $k$ for which $\Fix_G(B(v,k)\cap B(w,k))$ is trivial. In this case Property $\Pk{k}$ is trivially satisfied and $G^{(k)}=G$.
In our illustrative example from Section \ref{sec:k-closure} (Example \ref{eg:egfirst}) this holds for $k=2$. For the examples obtained from the automorphism group of some finite graph (see Example \ref{eg:cpr}) this holds for $k$ greater than or equal to half the diameter of the graph.
\end{remark}

\section{Property $P_k$.}\label{sec:propertytits}
In this section we define the natural generalisation of Tits' Property $P$, which we call \textit{Property $P_k$}, and relate it to the independence properties defined in the previous section. Property $P_k$ will be used in the next section to prove our simplicity result. The notation defined in the next two definitions will be used throughout both sections.

\begin{definition} If $X$ is a subtree of $\tree$ then let $X^k$ denote the subtree of $\tree$ spanned by the set $\{x\in V(\tree):d(x,X)\leq k\}$ of vertices at distance at most $k$ from $X$.
\end{definition}

\begin{definition}\label{def:titsPk} Suppose $G\leq\Aut(\tree)$, let $C$ be any path (finite or infinite) in $\tree$ and define $\pi$ to be the projection of $V(\tree)$ onto $V(C)$. For each $x\in V(C)$ define $F_x$ to be the permutation group acting on $\pi^{-1}(x)$ induced by $\Fix_G(C^{k-1})$. Then we say $G$ satisfies \textit{Property $P_k$} if and only if for all such $C$ the natural homomorphism $\Phi:\Fix_G(C^{k-1})\rightarrow\prod_{x\in V(C)}F_x$ is an isomorphism.
\end{definition}

It is immediate from this definition that if a group satisfies Property $P_k$ then it must also satisfy Property $\Pk{k}$.

\begin{proposition}\label{prop:chainsPk} Suppose $G\leq\Aut(\tree)$ satisfies Property $\Pk{k}$, and let $C$ be any finite path in $\tree$. Let $\pi$ denote the projection of $V(\tree)$ onto $V(C)$, and for each $x\in V(C)$ define $F_x$ to be the permutation group acting on $\pi^{-1}(x)$ induced by $\Fix_G(C^{k-1})$. Then $\Phi:\Fix_G(C^{k-1})\rightarrow\prod_{x\in V(C)}F_x$ is an isomorphism.
\end{proposition}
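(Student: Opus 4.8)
The plan is to show that $\Phi$ is an injective homomorphism whose only substantive content is surjectivity, and then to derive surjectivity by upgrading Property $\Pk{k}$ — a decomposition across a single edge — to a simultaneous decomposition across all the vertices of $C$. First I would dispatch the easy parts. Every element of $\Fix_G(C^{k-1})$ fixes $C$ pointwise, since $C\subseteq C^{k-1}$, and being an isometry fixing $C$ it preserves the projection $\pi$ and hence each fibre $\pi^{-1}(x)$; this makes $\Phi$ a well-defined homomorphism, and in particular shows each $F_x$ is genuinely a group acting on $\pi^{-1}(x)$. Because the fibres $\{\pi^{-1}(x):x\in V(C)\}$ partition $V(\tree)$, any $g\in\ker\Phi$ fixes every vertex and is therefore $\ident$, so $\Phi$ is injective. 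It remains to prove surjectivity.

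I would reduce surjectivity to the construction of \emph{localized} elements: for each $x\in V(C)$ and each $\sigma\in F_x$, an element $h\in\Fix_G(C^{k-1})$ that acts as $\sigma$ on $\pi^{-1}(x)$ and trivially on $\pi^{-1}(y)$ for every $y\neq x$. Granting these, two localized elements at distinct vertices have their supports in disjoint invariant fibres and so commute, and a product of one localized element per vertex of $C$ then realizes an arbitrary tuple $(\sigma_x)_{x\in V(C)}$, giving surjectivity.

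The localized elements come from Property $\Pk{k}$ applied at the edges of $C$ adjacent to $x$. The crucial preliminary observation is that for any edge $e=(v,w)$ of $C$ we have $B(v,k)\cap B(w,k)\subseteq C^{k-1}$; indeed $B(v,k)\cap B(w,k)$ is exactly the set of vertices within distance $k-1$ of $\{v,w\}$, and $\{v,w\}\subseteq C$. Consequently $\Fix_G(C^{k-1})\subseteq F_{k,e}$, so Property $\Pk{k}$ may be invoked on any element of $\Fix_G(C^{k-1})$. Writing $C=(x_0,\dots,x_n)$ and starting from $g\in\Fix_G(C^{k-1})$ realizing a given $\sigma\in F_{x_i}$, for an interior vertex $x_i$ I would cut twice. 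Applying Property $\Pk{k}$ at $e_i=(x_{i-1},x_i)$ factors $g=pq$ with $q$ trivial on the $x_{i-1}$-side $\tree_{(x_i,x_{i-1})}$ and equal to $g$ on the $x_i$-side $\tree_{(x_{i-1},x_i)}$; applying it to $q$ at $e_{i+1}=(x_i,x_{i+1})$ then isolates the action on $\pi^{-1}(x_i)=\tree_{(x_{i-1},x_i)}\cap\tree_{(x_{i+1},x_i)}$ and kills the remaining fibres. For an endpoint $x_0$ or $x_n$ a single cut suffices. At each step I must check that the chosen factor returns to $\Fix_G(C^{k-1})$: this holds because it agrees with $g$ on one semi-tree (where $g$ already fixes $C^{k-1}$) and is trivial on the complementary semi-tree (which it therefore fixes pointwise).

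The main obstacle is precisely this last verification, together with the accompanying geometric bookkeeping: confirming that a factor produced by Property $\Pk{k}$, which a priori lies only in the larger group $F_{k,e}$, in fact fixes all of $C^{k-1}$, and that cutting at the two edges flanking an interior vertex isolates exactly its fibre. Once these points are secured, the argument is a finite iteration of the single-edge decomposition and needs no separate induction on the length of $C$; the single-edge case itself is just Property $\Pk{k}$ restated through $\Phi$.
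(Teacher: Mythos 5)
Your proof is correct, but it takes a genuinely different route from the paper's. The paper argues by induction on the length $N$ of $C$: the base case $N=1$ is Property $\Pk{k}$ itself, and in the inductive step the last two fibres $\pi^{-1}(x_{N-1})$ and $\pi^{-1}(x_N)$ are merged into the single fibre $\pi'^{-1}(x_{N-1})$ of the shortened path $C'=x_0\cdots x_{N-1}$, the inductive hypothesis is applied to $C'$, and the resulting element is then checked to lie in $\Fix_G(C^{k-1})$. You avoid induction altogether: from the observation that $B(v,k)\cap B(w,k)=B(v,k-1)\cup B(w,k-1)\subseteq C^{k-1}$ for every edge $(v,w)$ of $C$ (so that $\Fix_G(C^{k-1})\leq F_{k,e}$ and Property $\Pk{k}$ may be invoked on its elements), you manufacture for each vertex $x_i$ a ``localized'' element of $\Fix_G(C^{k-1})$ --- acting as a prescribed $\sigma\in F_{x_i}$ on $\pi^{-1}(x_i)$ and trivially on every other fibre --- by cutting at the one or two edges of $C$ flanking $x_i$, and then realize an arbitrary tuple as a finite product of such elements. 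Both arguments hinge on the same two verifications (that Property $\Pk{k}$ applies inside $\Fix_G(C^{k-1})$, and that the factors it produces again fix $C^{k-1}$), but your organization buys something concrete: the paper's inductive step simply asserts that the merged permutation $f'_{N-1}$ belongs to $F'_{x_{N-1}}$, i.e.\ is induced by a \emph{single} element of $\Fix_G(C'^{k-1})$, and justifying that assertion requires exactly the cut-and-recombine argument at the edge $(x_{N-1},x_N)$ that you spell out in detail. So your proof is, if anything, more self-contained than the paper's; the cost is a little more per-vertex bookkeeping (two cuts at interior vertices, one at endpoints, plus the disjoint-support observation that makes the product realize the tuple), all of which you carry out correctly.
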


\begin{proof}
Let $C$ be a path of length $N$ defined by vertices $x_0,...,x_N$. We show by induction on $N$ that if $G$ satisfies Property $\Pk{k}$ then $\Phi$ is bijective. When $N=1$ the path $C$ is a single edge $(x_0,x_1)$ and Property $\Pk{k}$ implies that $\Phi$ is an isomorphism.

For the inductive hypothesis let $N>1$ and assume $\Phi$ is bijective for all paths of length $N-1$. It is clear that $\Phi$ is injective. Now take any $\prod_{i=1}^Nf_i$, $f_i\in F_{x_i}$. Let $C'$ denote the path $x_0...x_{N-1}$ of length $N-1$, let $\pi'$ denote the projection of $V(\tree)$ onto $V(C')$, define ${F'}_x$ to be the permutation group acting on $\pi'^{-1}(x)$ induced by $\Fix_G(C'^k)$ and let $\Phi'$ denote the natural homomorphism.

Since $\pi'^{-1}(x_{N-1})$ is the disjoint union of $\pi^{-1}(x_{N-1})$ and $\pi^{-1}(x_N)$ define $f'_{N-1}\in{F'}_{x_{N-1}}$ to agree with $f_{N-1}$ on $\pi^{-1}(x_{N-1})$ and with $f_N$ on $\pi^{-1}(x_N)$. By inductive hypothesis there exists $f\in\Fix_G(C'^{k-1})$ such that $\Phi'(f)=f_1...f_{N-2}f'_{N-1}$. But since $f$ agrees with $f_N$ on $\pi^{-1}(x_N)$ then it fixes $\pi^{-1}(x_N)\cap C^{k-1}$. Hence $f\in\Fix_G(C^{k-1})$ and $\Phi(f)=\prod_{i=1}^Nf_i$ as required.
\end{proof}

\begin{corollary}\label{cor:chainsPk}
Let $G$ be a closed subgroup of $\Aut(\tree)$. Then $G$ satisfies Property $\Pk{k}$ if and only if $G$ satisfies Property $P_k$.
\end{corollary}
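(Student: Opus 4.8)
The plan is to prove the biconditional by addressing each direction separately, exploiting the fact that the only substantive difference between Property $\Pk{k}$ and Property $P_k$ is that the latter quantifies over \emph{all} paths—including infinite ones—while Proposition \ref{prop:chainsPk} already handles finite paths. The forward implication (Property $P_k$ implies Property $\Pk{k}$) is immediate and was already noted after Definition \ref{def:titsPk}: taking $C$ to be a single edge $e=(v,w)$, the subtree $C^{k-1}$ equals $B(v,k)\cap B(w,k)$, so $\Fix_G(C^{k-1})=F_{k,e}$ and the isomorphism $\Phi$ onto the product over the two endpoints is exactly the decomposition required by $\Pk{k}$. This direction needs no closedness hypothesis.

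For the reverse implication (Property $\Pk{k}$ implies Property $P_k$), Proposition \ref{prop:chainsPk} already establishes that $\Phi\colon\Fix_G(C^{k-1})\to\prod_{x\in V(C)}F_x$ is an isomorphism for every \emph{finite} path $C$. So the entire content of the corollary is to bootstrap from finite paths to an arbitrary (possibly infinite or doubly-infinite) path $C$, and this is precisely where the closedness of $G$ is used. First I would verify injectivity of $\Phi$ for infinite $C$: an element of the kernel fixes $\pi^{-1}(x)$ for every $x\in V(C)$, hence fixes all of $V(\tree)=\bigsqcup_x\pi^{-1}(x)$ and is the identity. The work is in surjectivity. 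Given a target element $\prod_{x\in V(C)}f_x$ with each $f_x\in F_x$, I would exhaust $C$ by an increasing sequence of finite subpaths $C_1\subseteq C_2\subseteq\cdots$ with $\bigcup_n C_n=C$, apply Proposition \ref{prop:chainsPk} to each $C_n$ to obtain $g_n\in\Fix_G(C_n^{k-1})$ realising the restriction of the target to the vertices of $C_n$, and then extract a limit.

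The main obstacle is making the limiting argument rigorous in the topology on $\Aut(\tree)$. The sequence $(g_n)$ need not converge on the nose, but each $g_n$ agrees with the intended answer on larger and larger finite regions, so I would argue that the $g_n$ lie in a single compact set and that their prescribed behaviour on $\pi^{-1}(V(C_n))$ pins down a convergent subsequence whose limit $g$ satisfies $\Phi(g)=\prod_x f_x$. Concretely, the target element determines, for each vertex $u\in V(\tree)$, a unique image $g.u$ (namely $f_{\pi(u)}.u$), so one defines $g$ pointwise by this rule and must check that $g$ is a genuine automorphism fixing $C^{k-1}$; the $g_n$ certify that $g$ agrees with an element of $\Fix_G(C_n^{k-1})$ on every finite ball, so $g$ lies in the $m$-closure of $\Fix_G(C^{k-1})$ for all $m$, whence $g\in\overline{\Fix_G(C^{k-1})}$. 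Here closedness of $G$—and hence of $\Fix_G(C^{k-1})$, which is an intersection of the closed fixators of the finitely-many-per-ball vertices of $C^{k-1}$—forces $g\in\Fix_G(C^{k-1})$, completing surjectivity. The delicate point to get right is confirming that the pointwise-defined $g$ respects adjacency (so is an automorphism at all): this follows because adjacent vertices of $\tree$ project to equal or adjacent vertices of $C$, so their behaviour is governed by compatible finite-path data already shown consistent by Proposition \ref{prop:chainsPk}.
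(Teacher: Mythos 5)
Your proposal is correct and follows essentially the same route as the paper: the forward direction is the single-edge case noted after Definition \ref{def:titsPk}, and for an infinite or doubly-infinite path the paper likewise exhausts $C$ by finite subpaths $C_n$, applies Proposition \ref{prop:chainsPk} to each to get $g_n\in\Fix_G(C_n^{k-1})$, and uses closedness of $G$ to pass to a limit $g$ with $\Phi(g)=\prod_x f_x$. If anything, your treatment of the endpoint fibres (leaving them unconstrained and defining the limit pointwise by $g.u=f_{\pi(u)}.u$) is cleaner than the paper's, which prescribes tail permutations $f'_{\pm n}$ agreeing with all remaining $f_j$ without verifying that such permutations are actually induced by elements of $\Fix_G(C_n^{k-1})$.
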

\begin{proof}
From Proposition \ref{prop:chainsPk} it remains to show that if $G$ satisfies Property $\Pk{k}$, and $C$ is either an infinite or doubly-infinite path, then $\Phi$ is an isomorphism. We give the proof for the second case only, as this is the case we will encounter in the next section and both proofs are essentially the same.

Index the vertices $x_i$ in $C$ and consider an element $\prod f_i$ of $\prod_{i\in \ZZ}F_{x_i}$. Define $C_n$ to be the path from $x_{-n}$ to $x_n$. Apply Proposition \ref{prop:chainsPk} to $C_n$, in each case denoting the projection of $V(\tree)$ onto $V(C_n)$ by $\pi_n$ and the natural isomorphism by $\Phi_n$. This results in a sequence $\{g_n\}_{n\in\NN}$ where $g_n\in\Fix_G(C_n^{k-1})$, and $\Phi_n(g_n)=f'_{-n}f_{-n+1}...f_{n-1}f'_n$, where $f'_n$ is the permutation on $\pi_n^{-1}(x_n)$ that, for $j\geq n$, agrees with each $f_j$ on $\pi^{-1}(x_j)$ (and similarly for $f'_{-n}$). Since $G$ is closed the sequence $\{g_n\}$ converges to some $g\in\Fix_G(C^{k-1})$. But $\Phi_n(g_n)$ and $\Phi(g)$ agree on the sets $\pi^{-1}(x)$ for all $x\in V(C_{n-1})$. Hence $\Phi(g)$ is equal to the limit of the sequence $\{\Phi_n(g_n)\}$, which is $\prod f_i$.
\end{proof}

\section{Simplicity}
\label{sec:simplicity}

In this section we will prove our main simplicity result (Theorem \ref{thm:simple}), which utilises Property $\rm{P}_k$. The proof of this theorem follows the same process as the proof of \cite[Theoreme 4.5]{\Tits}. The following lemma, which is our analogue of \cite[Lemme 4.3]{\Tits}, is required.

\begin{lemma}\label{prop:commPk} Suppose $G\leq\Aut(\tree)$ is a closed subgroup and fix $k\in\NN$. Let $h\in G$ induce on some doubly-infinite path $C$ in $\tree$ a non-trivial translation. Let $K$ denote the fixator of $C^{k-1}$ in $G$. If $G$ satisfies Property $P_k$ then \[K=[h,K]:=\{hgh^{-1}g^{-1}:g\in K\}.\]
\end{lemma}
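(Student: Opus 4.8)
The plan is to exploit the direct-product decomposition of $K$ furnished by Property $P_k$ and to analyse how conjugation by $h$ permutes the factors. Since $h$ induces a translation of the doubly-infinite path $C$, it preserves $C$ and, being an isometry, also preserves the neighbourhood $C^{k-1}$; hence $h$ normalises $K=\Fix_G(C^{k-1})$ and in particular $[h,K]\subseteq K$. It therefore suffices to prove the reverse inclusion, that is, that every element of $K$ arises as a single commutator $[h,g]=hgh^{-1}g^{-1}$ with $g\in K$.

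First I would invoke Property $P_k$ (Definition \ref{def:titsPk}) for the doubly-infinite path $C$ to identify $K$ with the full direct product $\prod_{i\in\ZZ}F_{x_i}$, where $x_i$ ($i\in\ZZ$) are the vertices of $C$, adjacent consecutively, and $F_{x_i}$ is the permutation group induced by $K$ on the fibre $\pi^{-1}(x_i)$. Writing $\ell\neq 0$ for the translation length of $h$ along $C$, so that $h.x_i=x_{i+\ell}$, I would check that conjugation by $h$ respects this decomposition: because $h$ is an isometry preserving $C$, it satisfies $\pi(h.v)=h.\pi(v)$ and hence maps $\pi^{-1}(x_i)$ onto $\pi^{-1}(x_{i+\ell})$. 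Consequently conjugation by $h$ carries the factor $F_{x_i}$ isomorphically onto $F_{x_{i+\ell}}$ via some isomorphism $\phi_i$, so in the coordinates $(g_i)_{i\in\ZZ}$ of $K$ it acts by $(hgh^{-1})_{i}=\phi_{i-\ell}(g_{i-\ell})$. Since $\Phi$ is an isomorphism onto the \emph{full} Cartesian product, any tuple we care to write down determines a genuine element of $K$, with no convergence issue.

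The heart of the argument is then solving a recursion. Given a target element $a=(a_i)_{i\in\ZZ}\in K$, the equation $[h,g]=a$ reads coordinatewise as $\phi_{i-\ell}(g_{i-\ell})\,g_i^{-1}=a_i$, equivalently $g_i=a_i^{-1}\phi_{i-\ell}(g_{i-\ell})$. Because $\ell\neq 0$, the shift $i\mapsto i+\ell$ partitions $\ZZ$ into orbits, each a bi-infinite chain $\{i_0+n\ell:n\in\ZZ\}$ carrying no cycle. On each such chain I would fix $g_{i_0}$ arbitrarily (say $g_{i_0}=\ident$) and propagate forward by the recursion and backward by its inverse $g_{i-\ell}=\phi_{i-\ell}^{-1}(a_i g_i)$, thereby defining $g_i$ for every $i$. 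This produces $g=(g_i)\in\prod_{i\in\ZZ}F_{x_i}=K$ with $[h,g]=a$, showing $K\subseteq[h,K]$ and completing the proof.

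I expect the main obstacle to be precisely the solvability of this recursion, which works only because the shift has no finite orbits. Had $h$ fixed $C^{k-1}$ pointwise (shift by $0$) or acted with a finite cycle --- as happens for a finite path --- the per-orbit equation would become a genuine twisted-conjugacy condition and could fail for an arbitrary target $a$. Thus the key place where the hypothesis that $h$ is a \emph{non-trivial translation} is used is in guaranteeing $\ell\neq0$, which makes each orbit an obstruction-free bi-infinite chain. A secondary technical point worth verifying carefully is the identity $\pi(h.v)=h.\pi(v)$ and the resulting compatibility of conjugation with the product decomposition, since this is what lets us reduce the whole statement to the combinatorics of the coordinates $(g_i)$.
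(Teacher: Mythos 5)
Your proof is correct and follows essentially the same route as the paper's: both establish $[h,K]\subseteq K$ from the fact that $h$ preserves $C^{k-1}$, use Property $P_k$ to identify $K$ with the full product $\prod_{i\in\ZZ}F_{x_i}$, observe that conjugation by $h$ shifts the factors by the translation length, and solve the resulting coordinatewise recursion by choosing arbitrary values on one representative per shift-orbit and propagating in both directions (the paper picks the fundamental domain $0\leq z\leq a-1$, which is the same device as your per-chain base points).
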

\begin{proof} Clearly $h$ stabilises $C^k$, and hence $hgh^{-1}\in K$. This implies that $K\supseteq[h,K]$. Now suppose that $f\in K$; we must find $g\in K$ such that $hgh^{-1}g^{-1}=f$. Let $a$ be the amplitude of the translation $h$ and form a natural bijection between $V(C)$ and $\ZZ$. By Definition \ref{def:titsPk} $f=\prod f_z$ where $f_z\in F_z$, and $g$ can be defined by finding appropriate $g_z\in F_z$ for all $z\in \ZZ$. For each $z\in\ZZ$ notice that $h$ induces an isomorphism $\eta_z:F_z\rightarrow F_{z+a}$ defined by $\eta_z(x)=hxh^{-1}$. We define $g_z$ inductively as follows: if $0\leq z\leq a-1$ then $g_z$ is arbitrarily chosen in $F_z$; if $z\geq a$ then $g_z=f_z^{-1}.\eta_{z-a}(g_{z-a})$; if $z<0$ then $g_z=\eta_z^{-1}(f_{z+a}g_{z+a})$. It is easy to check that $g=\prod g_z$ satisfies $hgh^{-1}g^{-1}=f$ as required.
\end{proof}

We define the following subgroup of a group of tree automorphisms, which will be shown to be simple in the theorem below.
\begin{definition} Let $G\leq\Aut(\tree)$ be closed and fix $k\in\NN$. Then $G^{+_k}$ is the subgroup of $G$ generated by all elements $g\in G$ for which there exists $(v,w)\in E(\tree)$ such that $g$ fixes $B(v,k)\cap B(w,k)$ (which is equivalent to $g$ fixing $B(v,k-1)\cup B(w,k-1)$). 
\end{definition}

Recall that if $G$ is discrete then it can only satisfy Property $\rm{P}_k$ trivially for any $k\in\NN$; in this case $G^{+_k}$ is trivial. On the other hand it is immediate from the topology on $G<\Aut(\tree)$ that if $G$ is non-discrete then $G^{+_k}$ is nontrivial.

\begin{theorem}\label{thm:simple}
Let $\tree$ be a tree and fix $k\in\NN$. Suppose $G\leq\Aut(\tree)$ does not stabilise a proper non-empty subtree or an end of $\tree$, and satisfies Property $P_k$. Then every nontrivial subgroup of $G$ normalised by $G^{+_k}$ contains $G^{+_k}$; in particular $G^{+_k}$ is simple (or trivial).
\begin{proof}
Assume $G^{+_k}$ is non-trivial, let $H$ be a nontrivial subgroup of $G$ normalised by $G^{+_k}$, and $e=(v,w)$ be any edge of $\tree$. To prove the theorem it suffices to show that $H$ contains the fixator of the semi-tree $\tree_{(v,w)}$ in $F_{k,e}:=\Fix_G(B(v,k)\cap B(w,k))$ ($H$ will contain the fixator of $\tree_{(w,v)}$ in $F_{k,e}$ by a similar argument). Then it follows from Property $\Pk{k}$ that $H$ contains $F_{k,e}=\Fix_{F_{k,e}}(\tree_{(v,w)})\Fix_{F_{k,e}}(\tree_{(w,v)})$ for any edge $e$, and hence $H$ contains all generators of $G^{+_k}$.

Since $G^{+_k}$ is normal in $G$, by Lemma \ref{lem:normalised} $G^{+_k}$ does not stabilise a proper non-empty subtree or an end of $\tree$. We assumed that $H$ is normalised by $G^{+_k}$, so again by Lemma \ref{lem:normalised} $H$ does not stabilise a proper non-empty subtree or an end of $\tree$. Therefore by Proposition \ref{prop:T3.4} there exists a doubly-infinite path $C$ of $\tree$ and a non-trivial translation $h\in H$ on $C$. We will show that it may be assumed that $C\subset\tree_{(v,w)}$.

For any vertex $v$ of $C$ the orbit $H.v$ has non-empty intersection with $\tree_{(v,w)}$ by Lemma \ref{lem:T4.1}; that is, there exists $g\in H$ with $g(C)\cap\tree_{(v,w)}\neq\emptyset$. By replacing $C,h$ by $g(C),ghg^{-1}$ we can assume that $C\cap\tree_{(v,w)}$ is non-empty. In particular this means $C\cap\tree_{(v,w)}$ is at least an infinite path. Let $b,b'$ be the ends of $\tree$ for which $C$ contains representatives, at least one of which, say $b$, has representatives in $C\cap\tree_{(v,w)}$. Since $H$ does not stabilise a proper non-empty subtree or an end of $\tree$, there exists some $l\in H$ such that $l(b')\notin\{b,b'\}$. In addition, $l^{-1}(C)$ is a doubly-infinite path that does not contain any representative of $b'$ (since $l(b')\notin\{b,b'\}$).

Now let $\pi$ represent the projection of $\tree$ onto $C$; that is, for all $x\in V(\tree)$ define $\pi(x)$ to be the vertex of $C$ closest to $x$. There are two possibilities for the image $\pi(\tree_{(w,v)})$ of the other semi-tree; either $\pi(\tree_{(w,v)})$ is a single vertex in $\tree_{(v,w)}$ (when $C$ is contained in $\tree_{(v,w)}$), or $C':=\pi(\tree_{(w,v)})$ is the representative of $b'$, an end of $\tree_{(w,v)}$, that begins at $w$. Now since $l^{-1}(C)$ does not contain any representative of $b'$ the image $\pi(l^{-1}(C))$ must be contained in some representative of $b$, which is also contained in $C$. Define $C''$ to be the shortest such representative of $b$ (i.e. $\pi(l^{-1}(C))\subseteq C''\subseteq C$). Choose an integer $n$ such that $h^n(C'')$ and $C'$ are as far away as we like, say distance $k$. Given such $n$ the chain $h^n(l^{-1}(C))$ is disjoint from $\tree_{(w,v)}$ and hence contained in $\tree_{(v,w)}$. Replacing $C,h$ by $h^n(l^{-1}(C)),h^nl^{-1}hlh^{-n}$ we can assume that $C\subset\tree_{(v,w)}$, as is $C^{k-1}:=span\{x\in V(\tree):d(x,C)\leq k\}$ (by our choice of $n$).

Let $K$ denote the fixator of $C^{k-1}$ in $F_{k,e}$; clearly $K\subset G^{+_k}$. Lemma \ref{prop:commPk} implies that $K=[h,K]$, which is in $H$ (since $H$ is normalised by $G^{+_k}$). Also since $C^k\subset\tree_{(v,w)}$ then $K$ contains the fixator of $\tree_{(v,w)}$ in $F_{k,e}$. Therefore $\Fix_{F_{k,e}}(\tree_{(v,w)})\subset H$ as required.
\end{proof}
\end{theorem}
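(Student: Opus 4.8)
The plan is to follow the structure of Tits' proof of his Theorem 4.5, replacing Property $P$ with Property $P_k$ and the role of fixators of edges with the groups $F_{k,e}$. The overall strategy is: given a nontrivial subgroup $H$ normalised by $G^{+_k}$, show that for every edge $e=(v,w)$ the group $H$ contains $\Fix_{F_{k,e}}(\tree_{(v,w)})$; by symmetry it contains $\Fix_{F_{k,e}}(\tree_{(w,v)})$ as well, so by Property $\Pk{k}$ (which follows from $P_k$) it contains all of $F_{k,e}$, and these groups generate $G^{+_k}$.

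First I would use the hypotheses on $G$ together with the normality assumptions to locate a translation inside $H$. Since $G^{+_k}$ is normal in $G$ and $H$ is normalised by $G^{+_k}$, two applications of Lemma \ref{lem:normalised} show neither $G^{+_k}$ nor $H$ stabilises a proper subtree or an end. By Proposition \ref{prop:T3.4}, $H$ must then contain a non-trivial translation $h$ along some doubly-infinite axis $C$. The key geometric manoeuvre, and the part I expect to be the main obstacle, is to conjugate $C$ and $h$ within $H$ so that the \emph{fattened} path $C^{k-1}$ lies entirely inside the chosen semi-tree $\tree_{(v,w)}$. This is more delicate than in Tits' argument because it is not enough to push the axis into the semi-tree; I need a whole $k$-neighbourhood of it to land in the semi-tree, while still keeping the conjugated translation inside $H$.

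To achieve this I would proceed in stages, all via Lemma \ref{lem:T4.1} (orbits meet every semi-tree) and the freedom to conjugate by elements of $H$. First use Lemma \ref{lem:T4.1} to arrange that $C\cap\tree_{(v,w)}$ is non-empty, hence an infinite sub-ray; let $b$ be the end represented in this ray and $b'$ the opposite end. Since $H$ fixes no end, pick $l\in H$ with $l(b')\notin\{b,b'\}$, so that $l^{-1}(C)$ contains no representative of $b'$. Projecting onto $C$, the image of the complementary semi-tree $\tree_{(w,v)}$ is either a single vertex or a ray $C'$ heading to $b'$; in either case $\pi(l^{-1}(C))$ sits in a bounded initial segment $C''$ of the $b$-ray. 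Then choose a power $n$ so large that $h^n(C'')$ is at distance at least $k$ from $C'$; the conjugate axis $h^n(l^{-1}(C))$ together with its $k$-neighbourhood is then forced into $\tree_{(v,w)}$. Replacing $C,h$ by $h^n(l^{-1}(C))$ and $h^n l^{-1}h l h^{-n}$ (which remains in $H$), I secure $C^{k-1}\subset\tree_{(v,w)}$.

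Finally I would invoke the commutator lemma. Let $K=\Fix_{F_{k,e}}(C^{k-1})$; this is visibly contained in $G^{+_k}$, and Lemma \ref{prop:commPk} gives $K=[h,K]$. Since $H$ is normalised by $G^{+_k}$ and $h\in H$, every commutator $hgh^{-1}g^{-1}$ with $g\in K\subset G^{+_k}$ lies in $H$, so $K\subseteq H$. Because $C^k\subset\tree_{(v,w)}$, any automorphism fixing $\tree_{(v,w)}$ in particular fixes $C^{k-1}$, so $\Fix_{F_{k,e}}(\tree_{(v,w)})\subseteq K\subseteq H$, completing the step. The simplicity conclusion then drops out: taking $H$ to be any nontrivial normal subgroup of $G^{+_k}$ forces $H\supseteq G^{+_k}$, so $G^{+_k}$ has no proper nontrivial normal subgroups and is simple unless it is trivial.
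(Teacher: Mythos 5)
Your proposal is correct and follows essentially the same route as the paper's own proof: the same reduction to showing $H$ contains $\Fix_{F_{k,e}}(\tree_{(v,w)})$ for each edge, the same double application of Lemma~\ref{lem:normalised} and Proposition~\ref{prop:T3.4} to obtain a translation $h\in H$, the same conjugation by $h^{n}l^{-1}$ to force $C^{k-1}$ into the semi-tree, and the same appeal to Lemma~\ref{prop:commPk} via $K=[h,K]\subseteq H$. There are no gaps; your treatment of the $k$-neighbourhood (insisting the whole of $C^{k-1}$, not just $C$, lands in $\tree_{(v,w)}$) matches the paper's handling exactly.
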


\section{Constructing Simple Groups}\label{sec:other}

The preceding results underpin a general method for constructing simple groups acting on trees that, beginning with some arbitrary group $G$, constructs its $k$-closure (which has Property $\Pk{k}$) and applies Theorem \ref{thm:simple} to obtain the simple group $(G^{(k)})^{+_k}$. In this section further properties of $(G^{(k)})^{+_k}$ are proven.

\begin{lemma}\label{lem:simplegroups}
Suppose $G\leq\Aut(\tree)$ does not stabilise any proper subtree of $\tree$. Then:
\begin{enumerate}
\item $(G^{(k)})^{+_k}$ is an open (hence closed) subgroup of $G^{(k)}$;
\item $(G^{(k)})^{+_k}$ is non-discrete if and only if $G^{(k)}$ is non-discrete; and
\item $(G^{(k)})^{+_k}$ satisfies Property $\Pk{k}$.
\end{enumerate}
\begin{proof}
(i) This follows since the group is generated by the open sets $\Fix_{G^{(k)}}(B(v,k)\cap B(w,k))$.

(ii) If $G^{(k)}$ is discrete then all of its subgroups are discrete. Similarly if $G^{(k)}$ is non-discrete then all of its open subgroups are non-discrete.


(iii) Consider an edge $e=(v,w)$ and let $F_{k,e}:=\Fix_{G^{(k)}}(B(v,k)\cap B(w,k))$. By Proposition \ref{prop:GkhasPk} $G^{(k)}$ has Property $\Pk{k}$, which implies that $F_{k,e}$ is equal to the product $\Fix_{F_{k,e}}(\tree_{(v,w)})\Fix_{F_{k,e}}(\tree_{(w,v)})$. By definition $F_{k,e}$ and the factors belong to $(G^{(k)})^{+_k}$, and are therefore equal to their counterparts in $(G^{(k)})^{+_k}$. Hence $(G^{(k)})^{+_k}$ also satisfies Property $\Pk{k}$.
\end{proof}
\end{lemma}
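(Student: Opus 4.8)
Lemma \ref{lem:simplegroups} has three parts; I focus on the structure of the whole plan.

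The plan is to prove the three assertions in order, since each builds on the construction $(G^{(k)})^{+_k}$ as the subgroup of $G^{(k)}$ generated by the fixators $\Fix_{G^{(k)}}(B(v,k)\cap B(w,k))$ over all edges $(v,w)$. For part (i), I would observe that each generating set $\Fix_{G^{(k)}}(B(v,k)\cap B(w,k))$ is an open subgroup of $G^{(k)}$, because fixing a finite ball is an open condition in the topology on $\Aut(\tree)$ described in Section \ref{sec:notation} (it is of the form $\mathscr{U}(1,B(v,k)\cap B(w,k))$ intersected with $G^{(k)}$). A subgroup generated by open subgroups is open, and in a topological group every open subgroup is automatically closed (its complement is a union of cosets, each of which is open). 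So (i) reduces to this standard topological-group fact together with the openness of the generators.

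For part (ii), I would argue that discreteness passes down to every subgroup, while non-discreteness passes to open subgroups. If $G^{(k)}$ is discrete then $(G^{(k)})^{+_k}$, being a subgroup, is discrete. Conversely, if $G^{(k)}$ is non-discrete then by part (i) its open subgroup $(G^{(k)})^{+_k}$ cannot be discrete, because an open subgroup of a non-discrete group inherits a non-discrete topology (the identity is not isolated in $G^{(k)}$, and every neighbourhood of the identity meets the open subgroup). This part is essentially a clean consequence of (i).

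For part (iii), the goal is to verify Property $\Pk{k}$ directly from Definition \ref{def:Pk} applied to the group $H := (G^{(k)})^{+_k}$. Fix an edge $e=(v,w)$ and let $F_{k,e}^{H} := \Fix_{H}(B(v,k)\cap B(w,k))$. The key observation is that $\Fix_{H}(B(v,k)\cap B(w,k))$ actually coincides with $\Fix_{G^{(k)}}(B(v,k)\cap B(w,k))$, because the latter is precisely one of the generating subgroups of $H$ and hence is contained in $H$; so the fixator computed in $H$ equals the fixator computed in $G^{(k)}$. Then by Proposition \ref{prop:GkhasPk}, $G^{(k)}$ has Property $\Pk{k}$, giving the factorisation $F_{k,e}=\Fix_{F_{k,e}}(\tree_{(v,w)})\Fix_{F_{k,e}}(\tree_{(w,v)})$; since all three groups already lie in $H$, the same identity holds with fixators taken inside $H$, establishing Property $\Pk{k}$ for $H$.

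The main obstacle, such as it is, lies in (iii): one must be careful to confirm that the two semi-tree fixators $\Fix_{F_{k,e}}(\tree_{(v,w)})$ and $\Fix_{F_{k,e}}(\tree_{(w,v)})$ that appear in the factorisation are genuinely subsets of $(G^{(k)})^{+_k}$, so that the decomposition inherited from $G^{(k)}$ is actually a decomposition \emph{internal} to $H$. This is immediate once one notes that each factor fixes $B(v,k)\cap B(w,k)$ (being a subgroup of $F_{k,e}$), and so each factor belongs to the generating set of $(G^{(k)})^{+_k}$; hence the entire factorisation takes place inside $H$, and no element outside $H$ is needed.
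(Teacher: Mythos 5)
Your proposal is correct and takes essentially the same route as the paper's own proof: openness of the generating fixators $\Fix_{G^{(k)}}(B(v,k)\cap B(w,k))$ yields (i) and hence (ii), and for (iii) the key point---that $F_{k,e}$ and its two semi-tree factors already lie in $(G^{(k)})^{+_k}$, so the factorisation coming from Proposition \ref{prop:GkhasPk} is internal to $(G^{(k)})^{+_k}$---is exactly the paper's argument. Your added care in verifying that the fixators computed inside $(G^{(k)})^{+_k}$ coincide with those computed inside $G^{(k)}$ merely makes explicit what the paper leaves implicit.
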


\begin{theorem}\label{thm:simplegroups} Suppose $G\leq\Aut(\tree)$ does not stabilise any proper subtree of $\tree$. Then $(G^{(r)})^{+_r}\leq (G^{(k)})^{+_k}$ for all $r>k$, with equality if and only if $G^{(r)}=G^{(k)}$.

\begin{proof} For the first part, note that $G^{(r)}\leq G^{(k)}$, and hence every generator in $(G^{(r)})^{+_r}$ is a generator in $(G^{(k)})^{+_k}$.

Suppose $G^{(r)}=G^{(k)}$ and let $g\in(G^{(k)})^{+_k}$ be a generator. Then there exists an edge $(v,w)$ such that $g$ fixes $B(v,k)\cap B(w,k)$. By Lemma \ref{lem:simplegroups}(iii) $(G^{(k)})^{+_k}$ has Property $\Pk{k}$ and hence $g=g_1g_2$, where $g_1\in\Fix_{F_{k,e}}(\tree_{(v,w)})$ and $g_2\in\Fix_{F_{k,e}}(\tree_{(w,v)})$. Since $g_1\in G^{(k)}$ (and hence $G^{(r)}$) and $g_1$ fixes $\tree_{(v,w)}$, then there exists some edge $(t,u)\in\tree_{(v,w)}$ such that $g$ fixes $B(t,r)\cap B(u,r)$. Hence $g_1$ is a generator in $(G^{(r)})^{+_r}$. Similarly $g_2$ is a generator in $(G^{(r)})^{+_r}$, and hence $g\in (G^{(r)})^{+_r}$. 

On the other hand assume $(G^{(r)})^{+_r}=(G^{(k)})^{+_k}$. Let $x\in G^{(k)}$, $v\in V(\tree)$ and pick some $g\in G$ such that $x|_{B(v,k)}=g|_{B(v,k)}$. Then $g^{-1}x$ fixes $B(v,k)$ and hence is in $(G^{(k)})^{+_k}$. Then from the assumption $g^{-1}x\in G^{(r)}$, and since $g\in G^{(r)}$ then $gg^{-1}x=x\in G^{(r)}$.
\end{proof}\end{theorem}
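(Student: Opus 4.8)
The plan is to split the statement into three pieces: the unconditional containment $(G^{(r)})^{+_r}\leq(G^{(k)})^{+_k}$, and the two directions of the equivalence. First I would dispose of the containment, which requires no hypothesis on $G$. A generator of $(G^{(r)})^{+_r}$ is an element $g\in G^{(r)}$ fixing $B(t,r)\cap B(u,r)$ for some edge $(t,u)$. Since $r>k$, Proposition~\ref{prop:basic}(\ref{prop:basic2}) gives $g\in G^{(r)}\leq G^{(k)}$, and the inclusion $B(t,k)\cap B(u,k)\subseteq B(t,r)\cap B(u,r)$ shows $g$ also fixes $B(t,k)\cap B(u,k)$, so $g$ is a generator of $(G^{(k)})^{+_k}$.

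For the easy direction of the equivalence I would assume $(G^{(r)})^{+_r}=(G^{(k)})^{+_k}$ and take $x\in G^{(k)}$ together with a vertex $v$. Choosing $g\in G$ with $g|_{B(v,k)}=x|_{B(v,k)}$, the element $g^{-1}x\in G^{(k)}$ fixes $B(v,k)$, hence fixes $B(v,k)\cap B(w,k)$ for any neighbour $w$ of $v$; thus $g^{-1}x$ is a generator of $(G^{(k)})^{+_k}=(G^{(r)})^{+_r}\leq G^{(r)}$. As $g\in G\leq G^{(r)}$, this forces $x=g(g^{-1}x)\in G^{(r)}$, and together with $G^{(r)}\leq G^{(k)}$ we conclude $G^{(r)}=G^{(k)}$.

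The substantial direction assumes $G^{(r)}=G^{(k)}$ and aims at $(G^{(k)})^{+_k}\leq(G^{(r)})^{+_r}$. Here I would take a generator $g$ of $(G^{(k)})^{+_k}$, so $g\in F_{k,e}$ for some edge $e=(v,w)$, and invoke Property~$\Pk{k}$ of $G^{(k)}$ (Proposition~\ref{prop:GkhasPk}) to factor $g=g_1g_2$ with $g_1\in\Fix_{F_{k,e}}(\tree_{(v,w)})$ and $g_2\in\Fix_{F_{k,e}}(\tree_{(w,v)})$; both factors lie in $F_{k,e}\leq G^{(k)}=G^{(r)}$. The crux is to recognise each factor as a generator of $(G^{(r)})^{+_r}$. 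Since $g_1$ fixes the whole semi-tree $\tree_{(v,w)}$, it suffices to find an edge $(t,u)$ lying deep inside $\tree_{(v,w)}$, with both endpoints at distance greater than $r$ from $v$: for such an edge any vertex on the $v$-side of $e$ has its geodesic to $t$ passing through $v$, hence is at distance greater than $r$ from $t$, so the lens $B(t,r)\cap B(u,r)$ is contained in $\tree_{(v,w)}$ and is therefore fixed by $g_1$. This exhibits $g_1$ as a generator of $(G^{(r)})^{+_r}$; the symmetric argument handles $g_2$, whence $g=g_1g_2\in(G^{(r)})^{+_r}$.

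The hard part is guaranteeing that such a deep edge exists, that is, that every semi-tree $\tree_{(v,w)}$ is infinite, and this is exactly where the hypothesis that $G$ stabilises no proper non-empty subtree enters. The non-leaf vertices of $\tree$ span a $G$-invariant subtree (any geodesic joining two vertices of degree at least $2$ has all its interior vertices of degree at least $2$, so the non-leaf vertices form a connected set), which must then be empty or all of $\tree$. Ruling out the degenerate case of a single vertex or edge, $\tree$ has no leaves, every vertex has degree at least $2$, and each semi-tree thus contains an infinite ray $w=u_0,u_1,u_2,\dots$; the edge $(u_r,u_{r+1})$ then has both endpoints at distance at least $r+1$ from $v$, as required. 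Once this is in place, combining the containment of the first paragraph with the reverse containment of the third gives the stated equality precisely when $G^{(r)}=G^{(k)}$.
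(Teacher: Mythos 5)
Your proof is correct and follows essentially the same route as the paper's: the same generator-level containment, the same use of Property $\Pk{k}$ to factor a generator as $g_1g_2$ and recognise each factor as a generator of $(G^{(r)})^{+_r}$ via an edge deep inside the semi-tree, and the same argument for the reverse implication via $g^{-1}x$. The only point where you go beyond the paper is in justifying the existence of that deep edge: the paper simply asserts that a suitable edge $(t,u)\in\tree_{(v,w)}$ exists, whereas you correctly note that this requires every semi-tree to be infinite and derive it (leaflessness of $\tree$, modulo the trivial degenerate cases) from the hypothesis that $G$ stabilises no proper non-empty subtree — a detail the paper leaves implicit.
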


\begin{corollary}\label{cor:simplegroups} Suppose that $G\leq\Aut(\tree)$ does not stabilise any proper subtree of $\tree$, and does not satisfy Property $\Pk{k}$ for any $k$. Then there are infinitely many distinct non-discrete simple groups $(G^{(k)})^{+_k}$.
\begin{proof}
By Corollary \ref{cor:PkandGk} there are infinitely many distinct $G^{(k)}$, which implies that $G^{(k)}\neq \overline{G}$ for all $k$. By Theorem \ref{thm:simplegroups} there are infinitely many distinct simple groups $(G^{(k)})^{+_k}$. By Corollary \ref{cor:discrete} every $G^{(k)}$ is non-discrete, and hence by Lemma \ref{lem:simplegroups} each $(G^{(k)})^{+_k}$ is non-discrete.
\end{proof}\end{corollary}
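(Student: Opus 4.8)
The plan is to read off all three assertions --- infinitely many groups, each non-discrete, each simple --- from the infrastructure already assembled in this section, the crucial point being that the passage $G^{(k)}\mapsto (G^{(k)})^{+_k}$ preserves distinctness. First I would use the standing hypothesis to produce infinitely many distinct closures: since $G$ fails Property $\Pk{k}$ for every $k$, Corollary~\ref{cor:PkandGk} yields that the descending chain $G^{(1)}\geq G^{(2)}\geq\cdots$ never stabilises, so infinitely many of the $G^{(k)}$ are pairwise distinct and, in particular, $G^{(k)}\neq\overline{G}$ for all $k$. I would then feed this into the equivalence in Theorem~\ref{thm:simplegroups}, namely $(G^{(r)})^{+_r}=(G^{(k)})^{+_k}$ if and only if $G^{(r)}=G^{(k)}$: distinct closures therefore give distinct simple groups, so there are infinitely many distinct $(G^{(k)})^{+_k}$.

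Next I would rule out discreteness. Arguing by contradiction, if some $G^{(k)}$ were discrete then Corollary~\ref{cor:discrete} would force $G^{(k)}=G$; but then $G\leq G^{(j)}\leq G^{(k)}=G$ for every $j\geq k$, so the chain of closures would be eventually constant, contradicting the previous step. Hence every $G^{(k)}$ is non-discrete, and Lemma~\ref{lem:simplegroups}(ii) transfers this to each $(G^{(k)})^{+_k}$; in particular each $(G^{(k)})^{+_k}$ is non-trivial, which is exactly what is needed to discard the ``or trivial'' alternative below.

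For simplicity itself I would check that each closed group $G^{(k)}$ satisfies the hypotheses of Theorem~\ref{thm:simple}. It is closed by Proposition~\ref{prop:basic}(\ref{prop:basic1}), has Property $\Pk{k}$ by Proposition~\ref{prop:GkhasPk} and therefore Property $P_k$ by Corollary~\ref{cor:chainsPk}; and since $G^{(k)}$ shares its orbits with $G$ (Proposition~\ref{prop:basic}(\ref{prop:basic4})), Lemma~\ref{lem:T4.1} shows it inherits from $G$ the property of stabilising no proper non-empty subtree. Theorem~\ref{thm:simple} then gives that $(G^{(k)})^{+_k}$ is simple or trivial, and the non-triviality established above forces it to be simple. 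The load-bearing step --- and the one I would expect to be the main obstacle --- is the non-discreteness argument, since it is exactly what upgrades the ``simple or trivial'' conclusion of Theorem~\ref{thm:simple} to genuine simplicity and what guarantees the groups are interesting rather than degenerate; the remaining care is in confirming that the \emph{full} hypotheses of Theorem~\ref{thm:simple} hold for $G^{(k)}$ (in particular that no end of $\tree$ is stabilised, which one should either derive from the construction or add to the hypotheses alongside the subtree condition), after which the argument is a matter of threading together the earlier results.
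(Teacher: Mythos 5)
Your proof is, at its core, the same as the paper's: the same three steps, citing Corollary \ref{cor:PkandGk} to get infinitely many distinct closures, the equality criterion of Theorem \ref{thm:simplegroups} to transfer distinctness to the groups $(G^{(k)})^{+_k}$, and Corollary \ref{cor:discrete} together with Lemma \ref{lem:simplegroups}(ii) for non-discreteness; your eventual-constancy contradiction is just a slightly expanded version of the paper's appeal to Corollary \ref{cor:discrete}. The material you add --- checking the hypotheses of Theorem \ref{thm:simple} for $G^{(k)}$ --- is exactly what the paper leaves implicit, and your reservation about ends is not excess caution: it is a genuine gap in the corollary as stated, which the paper's own proof does not close.

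Moreover, the end condition cannot be ``derived from the construction''; it has to be added as a hypothesis. Take $G=BS(1,n)$ with $n\geq 2$, acting on its Bass--Serre tree. This group is vertex-transitive (so stabilises no proper subtree) and fails Property $\Pk{k}$ for every $k$ (Proposition \ref{prop:BSnoPk} covers $m=1$ since $\gcd(1,n)=1$), so it satisfies the corollary's hypotheses. However, every element of $G^{(k)}$ preserves the $t^{\pm1}$-labelling of directed edges (this is the argument proving Proposition \ref{prop:BS}(ii)), and since every vertex here has a unique outgoing $t^{-1}$-edge, $G^{(k)}$ stabilises the end $b$ reached by following those edges. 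Every generator of $(G^{(k)})^{+_k}$ fixes a vertex and stabilises $b$, hence preserves each part of $\mathcal{P}$; consequently, for each $i$, the fixator in $(G^{(k)})^{+_k}$ of the horoball $H_i$ consisting of all vertices with $t$-exponent sum at most $i$ is a normal subgroup of $(G^{(k)})^{+_k}$. It is nontrivial: Theorem \ref{thm:non-discrete} provides $g\in G$ fixing $B(v,k)\cap B(w,k)$ but not $B(w,k)$ at the edge $(v,w)=(\langle a\rangle,t\langle a\rangle)$, and Property $\Pk{k}$ of $G^{(k)}$ (Proposition \ref{prop:GkhasPk}) splits off the factor of $g$ supported on $\tree_{(v,w)}$, a nontrivial element of $(G^{(k)})^{+_k}$ fixing $H_1$; conjugating by elements of $G$ produces such elements for every $i$. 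It is proper: conjugating that factor by a large power of $t$ gives an element of $(G^{(k)})^{+_k}$ that moves vertices inside $H_i$. Hence $(G^{(k)})^{+_k}$ is not simple for this $G$, and the corollary requires the additional hypothesis that $G$ stabilises no end of $\tree$; once added, it is inherited by $G^{(k)}$ for free (any end stabilised by $G^{(k)}$ is stabilised by the subgroup $G$), and your argument then goes through. One further small point of the same kind, shared by your write-up and the paper: Corollary \ref{cor:PkandGk} is an equivalence about $\overline{G}$, not about $G$, and the paper's remark after Theorem \ref{thm:PkandGk} (dense free groups fail $\Pk{1}$ while their closures satisfy it) shows that the hypothesis on $G$ does not formally imply the one on $\overline{G}$, so the first step also deserves either a word of justification or a restated hypothesis.
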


\begin{example}\label{sec:simpleexamples}
Recall from Section \ref{sec:non-discrete-i} that the $k$-closures of $PSL(2,\mathbb{Q}_p)$ are all non-discrete and distinct. Also recall from Proposition \ref{prop:BSnoPk} that the Baumslag-Solitar group $BS(m,n)$ acting on its Bass-Serre tree does not satisfy Property $\Pk{k}$ for any $k\in\NN$, assuming that $m,n$ are relatively prime. Hence in these examples there are infinitely many distinct (as subgroups of $\Aut(\tree_{m+n})$) non-discrete simple groups $(G^{(k)})^{+_k}$ found by our construction.

We can describe the generators of the simple group $(BS(m,n)^{(1)})^{+_1}$ using the structure of automorphisms in the 1-closure. In Section \ref{sec:BS} we constructed all automorphisms of the 1-closure that fix the vertex labelled by $\langle a\rangle$. Such an automorphism fixes an edge incident on $\langle a\rangle$ (and hence is a generator) if and only if the permutation $c$ assigned to $\langle a\rangle$ is a multiple of either $m$ or $n$. Note that since the group is vertex-transitive, any other generator is the conjugate of one of these automorphisms by a translation, which we can assume to be contained in $BS(m,n)$. 
\end{example}

In order to explicitly describe (and possibly classify) simple groups arising from this process, there are two issues which still need to be addressed. The first problem is obtaining an algebraic description of a group's $k$-closure. This has been determined for the class of universal groups in \cite{\BurgerMozes}, which satisfy Property $P_1$. However we expect this will become increasingly difficult as $k$ increases. Secondly, whilst we can show that two simple groups are distinct as subgroups of $\Aut(\tree)$, this does not ensure they are non-isomorphic as topological groups. We require a method of determining when two groups have isomorphic $k$-closures, and hence contain isomorphic simple subgroups. 

\bibliography{refs} 
\bibliographystyle{plain}
\end{document}